\newcommand{\R}{\mathbb R}
\newcommand{\Z}{\mathbb Z}
\newcommand{\M}{\mathcal M}
\newcommand{\X}{\mathfrak X}
\newcommand{\Y}{\mathfrak Y}
\DeclareMathOperator{\diff}{Diff} 
\DeclareMathOperator{\pdiff}{PDiff}
\DeclareMathOperator{\mcg}{Mod}
\DeclareMathOperator{\pmcg}{PMod}
\DeclareMathOperator{\Smcg}{SMod}
\DeclareMathOperator{\Lmcg}{LMod}
\DeclareMathOperator{\Ima}{Im}
\DeclareMathOperator{\vcd}{vcd}
\DeclareMathOperator{\phome}{PHomeo}
\DeclareMathOperator{\Aut}{Aut}
\DeclareMathOperator{\sig}{sig}
\DeclareMathOperator{\SK}{SK}
\DeclareMathOperator{\Area}{Area}
\DeclareMathOperator{\Iso}{Isom}
\DeclareMathOperator{\cov}{Cov}
\DeclareMathOperator{\Smodn}{S\mathcal{N}_g^k}
\newcommand{\SKp}{\SK_p(h,t)}         
\newcommand{\SKtopk}{\mathcal{SK}_p^k(h,t)} 
\newcommand{\Ttuples}{\mathscr{B}(t)}       
\newcommand{\Ttuplesk}{\mathscr{B}^k(t)}    
\newcommand{\modn}{\mathcal{N}_g^k}     
\newcommand{\modnw}{\mathcal{N}_{g}}    
\newcommand{\modno}{\mathcal{N}_g^1}    
\newcommand{\modnpk}{\mathcal{N}_{p}^{k}}
\newcommand{\modnk}{\mcg(N_g;k)}    
\newcommand{\modsg}{\mcg(S_g)}      
\newcommand{\modng}{\mcg(N_g)}      
\newcommand{\hiper}{\mathbb{H}^2}   
\newcommand{\Isohiper}{\Iso(\hiper)}   
\newcommand{\GC}{\mathrsfs{C}}          
\newcommand{\CClk}{\GC_p^k(h,t)}    
\newcommand{\Zp}{\Z/p}                  
\newcommand{\Farr}[2]{\widehat{H}^{#1}(#2;\Z)}  
\newcommand{\Farrp}[2]{\widehat{H}^{#1}(#2;\Z)_{(p)}}  
\newcommand{\w}[1]{\widetilde{#1}}  
\newcommand{\wk}[1]{\widehat{#1}}  
\newcommand{\diffnk}{\diff(N_g;k)}
\newcommand{\pdiffnk}{\pdiff(N_g;k)}
\newcommand{\pmodsk}{\pmcg(S_g;k)}
\newcommand{\Smodnk}{\Smcg(N_g;k)}
\newcommand{\Lmodnht}{\Lmcg(N_h;t)}
\newcommand{\n}{\noindent}
\newtheorem{thm}{Theorem}[section]
\newtheorem{lem}[thm]{Lemma}
\newtheorem{prop}[thm]{Proposition}
\newtheorem{cor}[thm]{Corollary}
\newtheorem{mainthm}{Theorem}
\theoremstyle{definition}
\newtheorem{defi}[thm]{Definition}
\newtheorem{cons}[thm]{Construction}
\newtheorem{notation}[thm]{Notation}
\theoremstyle{remark}
\newtheorem{rmk}[thm]{Remark}
\newtheorem{example}[thm]{Example}
\newtheorem*{claim}{Claim}
\numberwithin{equation}{section}
\newcommand{\cuadro}[8]{\xymatrix @C=1.3cm @M=2mm{
	{#1} \ar[r]^-{#2} \ar[d]_-{#4} & {#3} \ar[d]^-{#5}  \\
	{#6} \ar[r]^-{#7} & {#8}   }}
\begin{document}

\title[Farrell cohomology of the pure MCG of non-orientable surfaces]{Farrell cohomology of the pure mapping class group of non-orientable surfaces}

\author{Nestor Colin}
\address{Instituto de Matem\'aticas, Universidad Nacional Aut\'onoma de M\'exico
Oaxaca de Ju\'arez, Oaxaca 68000, M\'exico}
\email{ncolin@im.umam.mx}

\subjclass{Primary 57K20, 55N20, 55N35, 57M07, 57M60; Secondary 57S05, 20H10.}




\keywords{Mapping class group, 
non-orientable surfaces,
Farrell cohomology,
surface kernel
}

\begin{abstract}
For an odd prime $p$, we determine the $p$-primary component of the Farrell cohomology of the pure mapping class groups of a non orientable surface of genus $p$ with $k\geqslant 1$ marked points. To do this, we classify conjugacy classes of subgroups of order $p$ of the pure mapping class group of a non orientable surface of any genus with marked points. This is obtained by extending the notion of topological equivalence for surface kernel epimorphisms of non Euclidean crystallographic groups, adapting it to the setting of surfaces with marked points. 
\end{abstract}

\maketitle



\section{Introduction}

Let $N_g$ be a closed non-orientable surface of genus $g$ and $\{z_1, \dots, z_k\}$ a set of $k\geqslant 0$ distinct points in $N_g$, we call them {\it marked points}. Let $\diff(N_g ; k)$ be the group of diffeomorphisms of $N_g$ that preserve the set of marked points and let $\pdiff(N_g ; k)$ be the subgroup of diffeomorphisms that fix the marked points pointwise. 
The {\it pure mapping class group} of $N_g$ with $k$ marked points is the group of isotopy classes of $\pdiff(N_g; k)$.
We use the notation $\mathcal{N}_g^k:=\pmcg(N_g;k)$. If the set of marked points is empty, we omit $k$ from the notation.

Studying the cohomology of mapping class groups has been a central topic in mathematics due to its rich connections with other areas, such as low-dimensional topology and moduli spaces. One method for studying this cohomology in higher degrees is through Farrell cohomology (see \cites{Xia92, Xia92Farr2,GMX92, Mis94, Xia90The,  GloQJ07, LuFarrell}). This cohomology is defined for groups $\Gamma$ with finite virtual cohomological dimension ($\vcd$), and in degrees above the $\vcd$, it agrees with the ordinary cohomology of the group. An important aspect that facilitates the computation of Farrell cohomology is its periodicity. Recall that a group $\Gamma$ with finite $\vcd$ is called $p$-periodic if there exist a non-zero integer $d$ such that the $p$-primary component of the Farrell cohomology groups $\Farr{i}{\Gamma}$ and $\Farr{i+d}{\Gamma}$ are isomorphic for all $i$. In this situation the $p$-primary component it becomes well-understood through Brown's formula \cite{Brown82Coh}*{Corollary X.7.4}:
\begin{equation}\label{Eqn:Browns:Formula}
    \Farrp{*}{\Gamma}\cong \prod_{\Zp\in S} \Farrp{*}{N_\Gamma(\Zp)}
\end{equation}
where $S$ is a set of representatives of conjugacy classes of subgroups of order $p$ and $N_\Gamma(\Zp)$ is the normalizer of $\Zp$ in $\Gamma$. 


The main objective of this work is to develop tools to provide explicit calculations of the $p$-primary component of Farrell cohomology of the pure mapping class group of a non-orientable surface $\modn$.

It is well known that $\modn$ has finite virtual cohomological dimension and has been calculated explicitly in \cite{Iv87}*{Theorem 6.9}, and recently, in \cite{CJX24Per}, N. Colin, R. Jiménez Rolland, and M. A. Xicoténcatl proved that if $g\geqslant 3$ and $k\geqslant 1$, the group $\modn$ has $p$-periodic cohomology whenever the group $\modn$ has $p$-torsion. Furthermore, they proved that its $p$-period is equal to $4$. This result combined with Equation \ref{Eqn:Browns:Formula} provides a clear method for determine the $p$-primary component of the Farrell cohomology of $\modn$ because reduce the problem into three fundamental steps:

\begin{enumerate}
    \item Finding subgroups of order $p$ in $\modn$.
    \item Classifying conjugacy classes of subgroups of order $p$ in $\modn$.
    \item Studying the cohomology of normalizers of subgroups of order $p$ in $\modn$.
\end{enumerate}

In this work, we explore each of these steps for the pure mapping class group of a non-orientable surface $\modn$ with $k \geqslant 1$ marked points, with the objective of providing explicit computations of the Farrell cohomology in some specific cases. We begin with the existence of subgroups of order $p$ in $\modn$. Using the Nielsen realization theorem for non-orientable surfaces \cite{CX24Nil}, we construct a branched cover, which is associated with a Riemann-Hurwitz equation. We show that the existence of $p$-torsion in $\modnw$ is analogous to finding solutions to the Riemann-Hurwitz equation. A Birman exact sequence argument provides a characterization of the existence of $p$-torsion in $\modn$ for $k \geqslant 1$, leading to the following result.

\begin{mainthm}\label{Thm:Main:Torsion}
    Let $g\geqslant 3$ and $k\geqslant 1$. Then $\modn$ contains a subgroup of order $p$ if and only if the Riemann-Hurwitz equation $g-2=p(h-2)+t(p-1)$ has an integer solution for $h$ and $t$ with $t\geq k, h\geq 1$.
\end{mainthm}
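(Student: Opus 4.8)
The plan is to prove both implications by translating between subgroups of order $p$ in $\modn$ and branched $\Zp$-covers of non-orientable surfaces: the Nielsen realization theorem \cite{CX24Nil} converts the algebraic hypothesis into an honest $\Zp$-action, the Riemann--Hurwitz formula records its combinatorics, and the Birman exact sequence controls the interaction with the $k$ marked points.

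For necessity, suppose $\modn$ contains a subgroup isomorphic to $\Zp$. Regarding the marked points as punctures and applying Nielsen realization, I would realize this subgroup as a group of diffeomorphisms of $N_g$ fixing each of the $k$ marked points, with generator $f$ of order $p$. Since $p$ is prime, every nontrivial element generates $\Zp$, so $\mathrm{Fix}(f)=\mathrm{Fix}(\Zp)$, and each fixed point carries full isotropy, hence is a branch point of ramification $p$ for the quotient map $N_g\to N_g/\Zp$; writing $t$ for the number of branch points gives $t=\#\mathrm{Fix}(\Zp)$. The $k$ marked points lie in $\mathrm{Fix}(\Zp)$, so $t\geq k$. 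To identify the quotient I would argue by orientation: off the branch points the map restricts to a genuine $\Zp$-cover $N_g^{\circ}\to Q^{\circ}$, and a cover of an orientable surface is orientable while deleting finitely many points cannot render $N_g$ orientable, so $Q$ is non-orientable, say of genus $h\geq 1$. Riemann--Hurwitz, $\chi(N_g)=p\,\chi(Q)-t(p-1)$, then reads exactly as $g-2=p(h-2)+t(p-1)$.

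For sufficiency, given a solution $(h,t)$ with $t\geq k$ and $h\geq 1$, I would build the cover from a surface-kernel epimorphism $\phi$ from the NEC group of $N_h$ with $t$ cone points of order $p$ onto $\Zp$. Denoting the cross-cap generators $a_1,\dots,a_h$ and the cone-point loops $x_1,\dots,x_t$, I require $\phi(x_i)\neq 0$ for all $i$ and the relation $2\sum_j\phi(a_j)+\sum_i\phi(x_i)=0$ in $\Zp$. As $p$ is odd, $2$ is invertible, so the generator $a_1$ (available because $h\geq 1$) can be chosen to satisfy the relation, and surjectivity is automatic since $t\geq 1$ forces some $\phi(x_i)$ to generate $\Zp$. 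The kernel of $\phi$ is torsion-free, so $\phi$ defines a genuine branched $\Zp$-cover whose total space is a closed surface with Euler characteristic prescribed by Riemann--Hurwitz; the orientation computation---using that there is no nontrivial homomorphism $\Zp\to\Z/2$---shows this surface is non-orientable, hence $N_g$. The deck action is a $\Zp$-action on $N_g$ with $t\geq k$ fixed points, and marking $k$ of them produces the desired subgroup of $\modn$.

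The step I expect to be the main obstacle is the marked-point bookkeeping rather than the cover construction: I must ensure that the realized action in the necessity direction truly fixes each marked point (so that they are branch points and $t\geq k$), and that in the sufficiency direction the constructed order-$p$ diffeomorphism remains nontrivial of order exactly $p$ after passing to $\modn$. I would settle the latter with the Birman exact sequence $1\to K\to \modn\to \modnw\to 1$, whose kernel $K$ is an iterated extension of fundamental groups of (possibly punctured) non-orientable surfaces and is therefore torsion-free; a finite subgroup thus injects into $\modnw$ with order preserved, and faithfulness of Nielsen-realized actions for $\chi<0$ (guaranteed by $g\geq 3$) keeps the generator nontrivial. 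Pinning down the precise form of this sequence and the torsion-freeness of $K$ in the non-orientable setting is the one genuinely technical point.
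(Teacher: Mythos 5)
Your proposal follows essentially the same route as the paper: Nielsen realization plus the Riemann--Hurwitz formula for the branched cover $N_g \to N_g/\Zp$ in one direction, an explicit surface-kernel epimorphism from the NEC group of signature $(h;-;[(p)^t])$ in the other, and torsion-freeness of the kernel of the Birman exact sequence to control the marked points. If anything, your direct bookkeeping --- realizing the action so that the $k$ marked points sit among the $t$ fixed points, and verifying non-orientability of the quotient and of the constructed cover via the absence of nontrivial homomorphisms $\Zp\to\Z/2$ --- is more explicit than the paper's induction on $k$, which yields the inequality $t\geqslant k$ only implicitly.
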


This result, combined with \cite{CJX24Per}*{Theorem 1}, provides conditions under which $\modn$ has $p$-periodic cohomology. Next, in Section \ref{Sec:Conjugacy} we study surface kernels  $\theta:\Gamma\to \Zp$ of a non-Euclidean crystallographic group $\Gamma$ (NEC group) by introducing the notion of topological equivalence of surface kernels relative to $k$ marked elliptic generators. This notion is crucial because it provides the correct outline to determine the conjugacy classes of $\modn$ with $k\geqslant 1$. Essentially, marking elliptic generators in $\Gamma$ is analogous to marking points on the surface. In this context, we prove the following result.

\begin{mainthm}\label{Thm:Main:CClass:SK}
    Let $g\geqslant 3$, $k\geqslant 1$, $h\geqslant k$ and $t\geqslant 1$ be a solution to the Riemann-Hurwitz equation. Then the conjugacy classes of subgroups of order $p$ in $\modn$ that act on $N_g$ with $t$ fixed points are in one-to-one correspondence with the set of surface kernels $\theta: \Gamma \to \Zp$ up to topological equivalence relative to the $k$ marked elliptic generators, where $\Gamma$ is a NEC group such that $\sig(\Gamma)=(h;-;[(p)^t])$.
\end{mainthm}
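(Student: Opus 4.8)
The plan is to construct an explicit correspondence in both directions and then check that it descends to a bijection on the relevant equivalence classes. \emph{From subgroups to surface kernels:} given a subgroup $\langle\phi\rangle\cong\Zp$ of $\modn$ acting on $N_g$ with $t$ fixed points, I would first invoke the Nielsen realization theorem for non-orientable surfaces \cite{CX24Nil} to realize $\langle\phi\rangle$ as a group of isometries for some hyperbolic metric on $N_g$ (available since $g\geqslant 3$). Because the mapping classes are pure, the realizing isometries fix each of the $k$ marked points, so these marked points sit among the $t$ fixed points of the action. The quotient $N_g/\langle\phi\rangle$ is then a hyperbolic orbifold whose underlying space is a non-orientable surface of genus $h$ carrying $t$ cone points of order $p$; passing to the universal orbifold cover identifies its orbifold fundamental group with an NEC group $\Gamma$ of signature $\sig(\Gamma)=(h;-;[(p)^t])$, and the covering $\hiper\to N_g$ supplies a surface kernel $\theta:\Gamma\to\Zp$ with $\ker\theta\cong\pi_1(N_g)$. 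I would designate the $k$ elliptic generators coming from the marked cone points as the marked ones.

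\emph{From surface kernels to subgroups:} conversely, given such a $\theta$, the surface-kernel hypothesis forces $\theta$ to be injective on each finite cyclic subgroup generated by an elliptic generator, so $K=\ker\theta$ is torsion-free and $\hiper/K$ is a closed surface. The Riemann--Hurwitz equation $g-2=p(h-2)+t(p-1)$ guarantees that this surface is exactly $N_g$, and $\Gamma/K\cong\Zp$ acts on it with $t$ fixed points, namely the images of the elliptic fixed points. Placing the $k$ marked points at the fixed points corresponding to the $k$ marked elliptic generators makes the induced action land in $\modn$ and so determines a subgroup of order $p$.

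The core of the argument is that, after passing to equivalence classes, these two constructions are mutually inverse. Two subgroups conjugate in $\modn$ differ by a mapping class fixing the marked points pointwise; this lifts to a marked-point-preserving homeomorphism of the quotient orbifold, which in turn induces an automorphism of $\Gamma$ carrying the marked elliptic generators to themselves and conjugating $\theta$ appropriately, i.e. a topological equivalence relative to the $k$ marked elliptic generators. Conversely, I would run this in reverse: such an equivalence lifts to a conjugacy between the two $\Zp$-actions that respects the marked points, yielding conjugacy in $\modn$. Making the ``lifts to'' precise is where I would use the Dehn--Nielsen--Baer correspondence adapted to NEC groups, identifying the pertinent subgroup of the mapping class group of the orbifold with outer automorphisms of $\Gamma$ preserving the marked generators, together with Nielsen realization to promote isotopy classes to genuine isometric conjugacies.

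\emph{The main obstacle} is the marked-point bookkeeping built into the equivalence relation: one must show that ``topological equivalence relative to the $k$ marked elliptic generators'' is precisely the relation induced by conjugacy in the \emph{pure} mapping class group, and not merely in $\modng$. In the non-pure setting the marked points may be permuted, which corresponds to permuting the marked elliptic generators; the purity constraint forbids this and pins the generators down individually, and verifying that this combinatorial restriction matches conjugacy in $\modn$ rather than $\modng$ is the delicate point. A secondary technical issue I would need to control is the ambiguity in choosing a canonical system of generators for $\Gamma$, ensuring that $\theta$ is well-defined up to the prescribed equivalence and independent of the particular Nielsen realization selected.
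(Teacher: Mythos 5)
Your overall architecture matches the paper's: Nielsen realization plus the lift to $\hiper$ produces the surface kernel, the kernel of $\theta$ reconstructs the action, and the content of the theorem is that conjugacy in $\pmcg(N_g;k)$ corresponds exactly to topological equivalence relative to the $k$ marked elliptic generators. The direction ``equivalent kernels $\Rightarrow$ conjugate subgroups'' is also handled essentially as in the paper, via geometric realization of NEC-group isomorphisms (the paper cites Macbeath's theorem rather than a Dehn--Nielsen--Baer statement, but the role is the same), with the marked-generator condition tracking the marked points through the fixed-point data.

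There is, however, a genuine gap in the direction ``conjugate subgroups $\Rightarrow$ equivalent kernels.'' If $\alpha G_1\alpha^{-1}=G_2$ in $\modn$ and $\w{G}_1,\w{G}_2$ are the Nielsen realizations with respect to two (a priori different) dianalytic structures $\X_1,\X_2$, then a representative $f\in\alpha$ only satisfies $f\circ y_1\circ f^{-1}\simeq y_2$ up to isotopy for $y_1\in\w{G}_1$, $y_2\in\w{G}_2$. To descend $f$ to a homeomorphism of the quotient orbifolds --- which is what you need in order to induce the isomorphism $\varphi:\Gamma\to\Gamma$ --- you must have the exact identity $f\w{G}_1f^{-1}=\w{G}_2$, and Nielsen realization does not ``promote isotopy classes to genuine isometric conjugacies'' as your sketch asserts: it realizes a single finite subgroup, and says nothing about a conjugating diffeomorphism between two realizations. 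The paper closes this gap with a dedicated lemma (a Teichm\"uller extremal problem for non-orientable surfaces with marked points, proved by lifting to the orientation double cover and invoking existence and uniqueness of Teichm\"uller maps): one chooses $f\in\alpha$ to be the Teichm\"uller representative from $(N_g;\X_1)$ to $(N_g;\X_2)$, and uniqueness forces $y_2^{-1}\circ f\circ y_1=f$ whenever $y_2^{-1}\circ f\circ y_1\simeq f$ with $y_i$ dianalytic. Without this (or some substitute rigidity statement), the passage from conjugacy in the mapping class group to an honest equivariant homeomorphism of Klein surfaces --- and hence to the commuting square defining topological equivalence --- does not go through.
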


This result reduces the problem of finding conjugacy classes of subgroups of order $p$ in $\modn$ into a combinatorial one. Specifically, each surface kernel $\theta: \Gamma \to \Zp$ corresponds to a $t$-tuple $(\theta(x_1), \ldots, \theta(x_k) \mid \theta(x_{k+1}), \ldots, \theta(x_t)) \in (\Zp)^t$, where $x_1, \ldots, x_k$ are the marked elliptic generators and $x_{k+1}, \ldots, x_t$ are the remaining elliptic generators of $\Gamma$. By introducing the notion of congruence in the set of $t$-tuples, we establish the following result based on the previous theorem.

\begin{mainthm}\label{Thm:Main:CClass:tTupl}
    Let $g\geqslant 3$, $k\geqslant 1$, $h\geqslant k$ and $t\geqslant 1$ be a solution to the Riemann-Hurwitz equation. Then the conjugacy classes of subgroups of order $p$ in $\modn$ that act on $N_g$ with $t$ fixed points are in one-to-one correspondence with the set of $t$-tuples $(1,\beta_2, \ldots, \beta_k \mid \beta_{k+1}, \ldots, \beta_t)$ up to congruence equivalence.
\end{mainthm}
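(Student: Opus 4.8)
The plan is to deduce this statement from Theorem \ref{Thm:Main:CClass:SK} by repackaging surface kernels and their topological equivalence in purely combinatorial terms. By that theorem it suffices to produce a bijection between the set of surface kernels $\theta\colon\Gamma\to\Zp$ (with $\sig(\Gamma)=(h;-;[(p)^t])$) modulo topological equivalence relative to the $k$ marked elliptic generators, and the set of normalized $t$-tuples $(1,\beta_2,\dots,\beta_k\mid\beta_{k+1},\dots,\beta_t)$ modulo congruence. First I would attach to each $\theta$ the tuple $(\theta(x_1),\dots,\theta(x_t))$ of images of the elliptic generators. The surface-kernel condition (that $\theta$ be surjective with torsion-free kernel) forces each $\theta(x_i)$ to have order $p$ in $\Zp$, i.e.\ $\theta(x_i)\in(\Zp)^{\times}$, so the tuple lies in $((\Zp)^{\times})^{t}$.

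The first point to check is that this tuple carries all the information of $\theta$ up to topological equivalence. Writing the defining relation of $\Gamma$ as $x_1\cdots x_t\,d_1^2\cdots d_h^2=1$ and applying $\theta$ gives $\sum_i\theta(x_i)+2\sum_j\theta(d_j)=0$ in $\Zp$; since $p$ is odd, $2$ is invertible, so for any prescribed tuple in $((\Zp)^{\times})^{t}$ this equation can be solved for the $\theta(d_j)$, and $\theta$ is automatically surjective because $\theta(x_1)\neq 0$. This yields surjectivity of the tuple assignment, and in particular shows that, unlike the orientable case, no sum constraint is imposed on the $\beta_i$. I would then show that the remaining freedom in the choice of the $\theta(d_j)$ does not change the topological-equivalence class: the geometric automorphisms of $\Gamma$ supported on the cross-caps (twists along one- and two-sided curves disjoint from the cone points) act transitively on the admissible values of the $\theta(d_j)$ while fixing every $\theta(x_i)$, so the assignment descends to a well-defined injection on equivalence classes.

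Next I would match the two equivalence relations move by move. Post-composition with $\Aut(\Zp)\cong(\Zp)^{\times}$ multiplies the whole tuple by a common unit, which is exactly the scaling generating congruence; using it with $u=\theta(x_1)^{-1}$ normalizes the first entry to $1$ and produces the representative $(1,\beta_2,\dots,\beta_k\mid\beta_{k+1},\dots,\beta_t)$. Pre-composition with the geometric automorphisms of $\Gamma$ that fix the $k$ marked elliptic generators is generated by the braid (half-twist) moves interchanging neighbouring non-marked cone points; such a move sends $x_i\mapsto x_{i+1}$ and $x_{i+1}\mapsto x_{i+1}^{-1}x_ix_{i+1}$, but after passing to the abelian group $\Zp$ conjugation is trivial, so its only effect on the tuple is the transposition $\beta_i\leftrightarrow\beta_{i+1}$. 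Since these transpositions generate the symmetric group on the $t-k$ non-marked entries, topological equivalence translates precisely into congruence, completing the bijection.

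The main obstacle is establishing that topological equivalence is generated by exactly these moves---no more and no fewer. Concretely, I would have to verify two things special to the non-orientable, pure setting: first, that the marking constraint genuinely forbids any automorphism of $\Gamma$ from carrying a marked generator to a non-marked one (so the first $k$ entries cannot be permuted, making $\beta_2,\dots,\beta_k$ honest invariants); and second, the effect of the crosscap-slide on the local rotation data, i.e.\ whether dragging a cone point through a cross-cap can invert an individual $\theta(x_i)$. Pinning down precisely which orientation-reversing moves survive in the pure mapping class group, and showing they coincide with the generators of congruence, is the crux and the place where the argument departs from the classical orientable computation.
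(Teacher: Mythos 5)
Your overall route is the same as the paper's: factor through Theorem \ref{Thm:Main:CClass:SK}, send a surface kernel $\theta$ to the tuple $(\theta(x_1),\dots,\theta(x_t))\in((\Zp)^\times)^t$, observe that the $\theta(d_j)$ carry no invariant information (solvable because $2$ is invertible in $\Zp$, so there is no sum constraint on the $\beta_i$), and match post-composition with $\Aut(\Zp)$ to global scaling and the braid moves on unmarked cone points to permutations. This is exactly the content of the paper's Theorem \ref{Thm:FixedPointData} together with the normalization remark that puts $\beta_1=1$. However, you explicitly leave open the ``crux'' --- that topological equivalence is generated by precisely the moves you list --- and that is where the actual proof lives, so as written there is a genuine gap in both directions of the bijection.

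Concretely, two things are missing. For well-definedness (equivalent kernels give congruent tuples) you need that \emph{every} automorphism $\varphi$ of $\Gamma$, not just the geometric moves you enumerate, carries each $x_i$ to a conjugate of some $x_j$ or $x_j^{-1}$; the paper does not derive this but cites it (Remark \ref{Rmk:conjugated_to_x_or_x-1}, from \cite{Bujalance15}), and your first worry --- that a marked generator might be sent to an unmarked one --- is a non-issue because Definition \ref{Def:SurfaceKernel:Marked} already forces $\varphi(x_i)$ to be conjugate to a power of $x_i$ for $i\leqslant k$. For injectivity (congruent tuples give equivalent kernels) you must \emph{realize} each congruence move by an automorphism, and the answer to your second question is yes: an individual $\theta(x_i)$ can be inverted, by the explicit automorphism $d_1\mapsto x_i\cdot d_1$, $x_i\mapsto (x_i d_1)\, x_i^{-1}\,(x_i d_1)^{-1}$ (conjugating the other generators appropriately), which is the paper's Lemma \ref{Lem:change_of_sign_xi}; this is precisely why the individual signs $\varepsilon_i$ appear in the definition of congruence, in contrast with the orientable case. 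Similarly, your claim that the residual freedom in the $\theta(d_j)$ is killed by crosscap automorphisms ``while fixing every $\theta(x_i)$'' needs care: the basic move $d_1\mapsto x_t d_1$ sends $x_t$ to a conjugate of $x_t^{-1}$, so one must compose such moves in pairs to restore the sign, which is the paper's Lemma \ref{Lem:Change_of_epimorphism_dj_equal_0}. Supplying these explicit automorphisms (or the citation) would close the argument.
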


The final step, determining the cohomology of normalizers, is not yet complete for all cases and genera $g$. This requires a more specific treatment according to the particular characteristics of $\modn$ and its conjugacy classes of subgroups of order $p$. In general, using Birman-Hilden theory, for any given subgroup $\Zp \leqslant \modn$ there exists an extension of groups:
\[
    1\to \Zp \to N(\Zp) \to H \to 1
\]
where $h \geqslant 1$ and $t \geqslant k$ are solutions to the Riemann-Hurwitz equation associated with the group $\Zp$, and $H$ is a subgroup of $\mcg(N_h;t)$. Particularly, in the case where $g = p$, this extension, combined with the fact that the $p$-period of $\modnpk$ is equal to $4$ (see \cite{CJX24Per}*{Theorem 2}), leads to the following result.

\begin{mainthm}\label{Thm:Main:Nor:Npk}
    Let $p$ be an odd prime and $\Zp \leqslant\mathcal{N}_p^k$, with $k=1,2$. Then $N(\Zp)\cong D_{2p}$ where $D_{2p}$ is the dihedral group of order $2p$. Furthermore, we have that
    $$
        \Farrp{i}{N(\Zp)}= \begin{cases}
            \Zp 	& 	i\equiv 0    \mod (4) \\
            0 		&	i\equiv 1,2,3   \mod (4).
        \end{cases}
    $$
\end{mainthm}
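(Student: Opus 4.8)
The plan is to first reduce everything to a single arithmetic case. Specializing the Riemann--Hurwitz equation of Theorem~\ref{Thm:Main:Torsion} to $g=p$ gives $p-2=p(h-2)+t(p-1)$, equivalently $p(h+t)=3p-2+t$. Reducing modulo $p$ forces $t\equiv 2\pmod p$, and together with $h\geqslant 1$, $t\geqslant 1$ this leaves only the solution $h=1$, $t=2$. Thus $\modnpk$ carries $p$-torsion exactly when $k\leqslant t=2$, i.e. for $k\in\{1,2\}$, and any $\Zp\leqslant\modnpk$ acts on $N_p$ with precisely two fixed points and quotient a NEC group $\Gamma$ with $\sig(\Gamma)=(1;-;[(p)^2])$, namely the projective plane $N_1$ with two cone points of order $p$.

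Next I would feed this data into the extension $1\to\Zp\to N(\Zp)\to H\to 1$ with $H\leqslant\mcg(N_1;2)$ recalled in the introduction, and identify $H$. Any class normalizing $\Zp$ permutes its two fixed points; for $k=2$ both are marked, and for $k=1$ one is marked, which forces the second to be fixed as well, so in either case every element of $N(\Zp)$ fixes both fixed points and $H$ lies in the subgroup of $\mcg(N_1;2)$ fixing both cone points. The heart of the argument is to show that this group of liftable pure orbifold classes is exactly $\Z/2$: I would produce one nontrivial liftable involution---a reflection fixing the two branch points and reversing the direction of the $\Zp$-rotation---and rule out anything larger using the smallness of the mapping class group of $N_1$ with two marked points, read off from the NEC/topological-equivalence description of Section~\ref{Sec:Conjugacy}.

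Granting $H\cong\Z/2$, the group $N(\Zp)$ has order $2p$, and I would distinguish the dihedral from the cyclic extension by computing the conjugation action of the involution on $\Zp$. Since the reflection reverses the local rotation at each fixed point, it sends a generator $x$ of $\Zp$ to $x^{-1}$; choosing the lift to be an honest involution then identifies $N(\Zp)\cong D_{2p}$. The cohomology is then routine: as $\Zp$ is normal in $D_{2p}$ of index $2$ coprime to $p$, a transfer argument gives $\Farrp{*}{N(\Zp)}\cong\widehat{H}^*(\Zp;\Z)^{\Z/2}$, the invariants under inversion. On $\widehat{H}^{2j}(\Zp;\Z)\cong\Zp$ inversion acts by $(-1)^j$, so the invariants are $\Zp$ for $j$ even and $0$ for $j$ odd, while odd degrees vanish; this yields the stated $4$-periodic formula, in agreement with the $p$-period being $4$.

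The main obstacle is the middle step: verifying that the liftable classes give exactly $H\cong\Z/2$ and that the involution acts by inversion rather than trivially, so that the extension is genuinely dihedral. Once that structural statement is in hand, the Riemann--Hurwitz reduction and the invariant-theoretic cohomology computation are both formal.
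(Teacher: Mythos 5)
Your proposal is correct in outline and reaches the theorem by a route that overlaps substantially with the paper's but differs at two instructive points. The reduction to $(h,t)=(1,2)$ and the extension $1\to\Zp\to N(\Zp)\to H\to 1$ with $H\leqslant\pmcg(N_1;2)\cong\Z/2\times\Z/2$ are exactly as in the paper (Example \ref{Ex:genus_p} and Proposition \ref{Prop:Extension_of_Normalizers}). The step you rightly flag as the main obstacle --- showing $H\cong\Z/2$ --- is precisely where the paper's content lies: it computes the action of the punctured slides $v_1,v_2$ on $\pi_1(N_1^2)$ (Lemma \ref{Lem:Image_of_vi's}) and applies the lifting criterion $y_*(\ker\theta)=\ker\theta$ to show that $[v_1]$ and $[v_2]$ individually do \emph{not} lift while $[v_1\circ v_2]$ does. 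Note that ``smallness'' of $\pmcg(N_1;2)$ alone cannot rule out $H=\Z/2\times\Z/2$; you genuinely need the negative verification for $v_1$ and $v_2$, so this computation cannot be avoided. Where you diverge usefully is in distinguishing $D_{2p}$ from $\Z/2p$: the paper invokes the external fact that the $p$-period of $\modnpk$ is $4$ and compares with the $p$-periods of the two candidate groups, whereas you propose to read off the conjugation action directly. Your route is more self-contained, and in fact it falls out of the same punctured-slide computation: the identity $\theta((v_1\circ v_2)_*(w))=-\theta(w)$ says exactly that the lift conjugates the deck group by inversion, which forces $N(\Zp)\cong D_{2p}$ without any appeal to periodicity. (Your geometric gloss of the liftable class as a ``reflection'' is loose --- it is a product of two crosscap/punctured slides on $\R P^2$ --- but the inversion property is what matters and it holds.) Finally, your transfer argument computing $\Farrp{\ast}{D_{2p}}$ as the inversion-invariants of $\widehat{H}^{\ast}(\Zp;\Z)$, with $(-1)^j$ acting on degree $2j$, is a correct and explicit justification of the cohomology formula that the paper states without proof.
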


Finally, at the end of this paper, we provide an explicit computation of the Farrell cohomology using the techniques and results developed throughout our work:

\begin{mainthm}\label{Thm:Main:FarrellNpk}
    Let $p$ be an odd prime. Then:
    \begin{align*}
        \Farrp{i}{\modnpk}=& \begin{cases}
        \left( \Zp \right)^{\tfrac{p-1}{2}} & i\equiv 0 \mod (4) \\
         0                 & \text{other cases}
    \end{cases}    
    & \text{for } & k=1,2; \\
    \\
        \Farrp{i}{\modnpk} = & \ \ 0    & \text{for } & k\geqslant 3.
    \end{align*}
\end{mainthm}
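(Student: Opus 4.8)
The plan is to combine Brown's formula~\eqref{Eqn:Browns:Formula} with the normalizer computation of Theorem~\ref{Thm:Main:Nor:Npk}, reducing the whole statement to a count of conjugacy classes of order-$p$ subgroups, which Theorem~\ref{Thm:Main:CClass:tTupl} in turn converts into a combinatorial count of congruence classes of tuples. First I would solve the Riemann--Hurwitz equation for $g=p$: substituting into $p-2=p(h-2)+t(p-1)$ and imposing $h\geqslant 1$, $t\geqslant 1$ forces $h=1$ and $t=2$, and there are no other solutions (for $h\geqslant 2$ the right-hand side is already $\geqslant p-1>p-2$ as soon as $t\geqslant 1$, while $h=1$ gives $t(p-1)=2(p-1)$, hence $t=2$). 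Thus every order-$p$ subgroup of $\modnpk$ acts with exactly $t=2$ fixed points over a base orbifold of signature $(1;-;[(p)^2])$, and the set $S$ of conjugacy-class representatives in Brown's formula is exactly the one enumerated by Theorem~\ref{Thm:Main:CClass:tTupl} with $t=2$.

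For $k\geqslant 3$ the constraint $t\geqslant k$ cannot be met, so by Theorem~\ref{Thm:Main:Torsion} the group $\modnpk$ has no $p$-torsion. Since the $p$-primary part of Farrell cohomology vanishes for groups of finite $\vcd$ without $p$-torsion, this immediately gives $\Farrp{i}{\modnpk}=0$ for all $i$, which is the second line of the statement.

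For $k=1,2$ I would apply~\eqref{Eqn:Browns:Formula}. By Theorem~\ref{Thm:Main:Nor:Npk} each normalizer is isomorphic to $D_{2p}$ with $\Farrp{i}{N(\Zp)}\cong\Zp$ when $i\equiv 0\pmod 4$ and $0$ otherwise, so the product over $S$ yields $\Farrp{i}{\modnpk}\cong(\Zp)^{|S|}$ concentrated in degrees $i\equiv 0\pmod 4$ (consistent with the $p$-period $4$ established in \cite{CJX24Per}*{Theorem 2}). It then remains only to show $|S|=(p-1)/2$.

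The real content is the count of $S$. By Theorem~\ref{Thm:Main:CClass:tTupl} with $t=2$, the classes correspond to tuples $(1\mid\beta_2)$ for $k=1$, or $(1,\beta_2\mid)$ for $k=2$, taken up to congruence, where $\beta_2\in(\Zp)^{\times}$ because each elliptic generator must map to an element of order $p$; the surface-kernel relation $\alpha_1+\alpha_2+2\delta=0$ imposes no further restriction, as $2$ is invertible and $\delta$ is unconstrained. I expect the congruence to identify $\beta_2$ with $-\beta_2$: this is the non-orientable sign change produced by pushing the unmarked (respectively the second marked) cone point around the orientation-reversing core of the crosscap, which inverts the local rotation and hence its image in $\Zp$, while post-composition with $\Aut(\Zp)$ re-normalizes the first coordinate to $1$. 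Since $p$ is odd, $\beta_2\neq-\beta_2$ for every $\beta_2\in(\Zp)^{\times}$, so the $p-1$ values split into exactly $(p-1)/2$ pairs, giving $|S|=(p-1)/2$ and the first line. The main obstacle is pinning down this congruence precisely and uniformly: I must verify that the sign-change move is available and is the \emph{only} nontrivial identification in both cases, checking in particular that for $k=2$ pushing one marked point around the crosscap leaves the other cone datum unchanged (after re-normalization), and that for $k=1$ no additional independent rescaling of the unmarked coordinate $\beta_2$ is permitted, so that neither case collapses to fewer nor splits into more than $(p-1)/2$ classes.
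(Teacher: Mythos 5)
Your proposal is correct and follows essentially the same route as the paper: vanishing for $k\geqslant 3$ via Theorem~\ref{Thm:Main:Torsion}, and for $k=1,2$ Brown's formula combined with Theorem~\ref{Thm:Main:Nor:Npk} and the count of $(p-1)/2$ conjugacy classes. The only difference is that you re-derive the class count from the congruence on $2$-tuples (identifying $\beta_2$ with $-\beta_2$ after normalizing the first coordinate), which is exactly the content and proof of the paper's Corollary~\ref{Cor:Number_of_Conjugacy_clases_Npk}.
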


Results of the $p$-primary component of the Farrell cohomology 
in the orientable case $\Gamma_g^k=\pmodsk$ were obtained in \cites{GloQJ07, Xia92Farr2, Xia95, lu01per} where the authors determined 
$\widehat{H}^*(\Gamma_{p-1}^k; \Z)_{(p)}$, $\widehat{H}^*(\Gamma_{(p-1)/2}^k; \Z)_{(p)}$, $\widehat{H}^*(\Gamma_{p}; \Z)_{(p)}$, $\widehat{H}^*(\Gamma_{k(p-1)/2}^{k+1}; \Z)_{(p)}$ and $\widehat{H}^*(\Gamma_{k(p-1)/2}^{k+2}; \Z)_{(p)}$ with $k\geqslant 0$ and $p$ is an odd prime. Furthermore, Q. Lu \cite{LuFarrell} obtained all the $p$-primary components of the Farrell cohomology of the pure mapping class group $\Gamma_g^k$ of a surface of low genus $g=1,2,3,$ when $\Gamma_g^k$ has $p$-torsion, $p$ is an odd prime, and $k\geqslant 1$.  In the closed case of a non-orientable surface, G. Hope and U. Tillman, in \cite{HT09}, investigated the $p$-periodicity of the Farrell cohomology of $\mathcal{N}_g$  where $g \geqslant 3$ establishing some conditions for which $\mathcal{N}_g$ has $p$-periodic cohomology. However, their methods were not sufficient for computing some of the $p$-primary components of these groups even to compute the $p$-period.

\subsection*{Outline.} In Section \ref{Sec:Subgroups} we study subgroups of order $p$ in $\modn$ proving Theorem \ref{Thm:Main:Torsion}. In Section \ref{Sec:Conjugacy} we recall some facts about non-euclidean crystallographic subgroups and surface kernels epimorphism for this groups. After recalling these objects, we introduce the notion of topological equivalence relative to the $k$ marked elliptic generators, which allows us to connect with the conjugacy classes of subgroups of order $p$ in $\modn$, proving Theorem \ref{Thm:Main:CClass:SK} and Theorem \ref{Thm:Main:CClass:tTupl}. In Section \ref{Sec:Normalizers} we review some notation about Birman-Hilden theory in the context of non-orientable surfaces, focusing on normalizers of subgroups of order $p$ in $\modn$. We prove that the normalizer $N(\Zp)$ of a subgroup $\Zp \leqslant \modnpk$ is equal to the dihedral group $D_{2p}$ of order $2p$ (Theorem \ref{Thm:Main:Nor:Npk}). Finally, in Section \ref{Sec:FarrellNpk}, we obtain the $p$-primary component of the Farrell cohomology of the pure mapping class group $\modnpk$, proving Theorem \ref{Thm:Main:FarrellNpk}.  




\section{Subgroups of order \texorpdfstring{$p$}{p} in \texorpdfstring{$\modn$}{N g k}}
\label{Sec:Subgroups}

In \cite{CJX24Per}*{Theorem 1}, it was shown that the pure mapping class group $\modn$ of a non-orientable surface has $p$-periodic cohomology for $g \geqslant 3$ and $k \geqslant 1$ whenever $\modn$ contains $p$-torsion.  
In this section, we provide conditions under which subgroups of order $p$ exist in the pure mapping class group $\modn$, leading to cases where its cohomology is $p$-periodic.
These conditions are derived by the classical Riemann-Hurwitz equation applied to a branched covering space, which arises from the action of $\Zp$ on the surface $N_g$.

For the rest of the paper, we assume that $g \geqslant 3$, unless otherwise stated. We begin with a similar result obtained by Q. Lu in \cite{lu01per}*{Theorem 2.7} for the orientable case.

\begin{prop}
    If $\modnw$ contains a subgroup of order $p$, then the equation 
    \[
        g-2=p(h-2)+t(p-1) 
    \]
    has a non-negative solution $h\geq 1 $.
\end{prop}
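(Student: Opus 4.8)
The plan is to turn the algebraic hypothesis into geometry and then read off the equation from Riemann--Hurwitz. Suppose $\modnw$ contains a subgroup isomorphic to $\Zp$. Since $g\geqslant 3$, the surface $N_g$ admits a hyperbolic structure, and by the Nielsen realization theorem for non-orientable surfaces \cite{CX24Nil} this subgroup is realized by a genuine action of $\Zp$ on $N_g$ by isometries. I would then consider the quotient map $\pi\colon N_g\to Q:=N_g/\Zp$, which is a branched covering of degree $p$.

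Next I would pin down the local structure of the action. Any nontrivial element of $\Zp$ that fixes a point acts on the tangent plane as an element of $O(2)$ of order dividing $p$; since $p$ is odd it cannot be a reflection, so it lies in $SO(2)$ and is a rotation. Hence the fixed points are isolated cone points, there are no mirror boundary curves, and $Q$ is a closed surface. Because $p$ is prime, every non-free orbit consists of a single fully ramified point of index $p$; let $t\geqslant 0$ denote the number of such branch points in $Q$ (equivalently, the number of fixed points of the action on $N_g$).

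The key step is to show that $Q$ is non-orientable. I would argue by contradiction: if $Q$ were orientable then $Q$ minus its branch points would be orientable, and $N_g$ minus the ramification points is an \emph{unramified} covering of it, hence also orientable; since deleting finitely many points from a closed surface does not affect orientability (a non-orientable surface contains a M\"obius band disjoint from any finite set), this would force $N_g$ to be orientable, a contradiction. Therefore $Q=N_h$ for some $h\geqslant 1$.

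Finally I would apply the Riemann--Hurwitz formula to the degree-$p$ branched cover $\pi$, giving
\[
    \chi(N_g)=p\,\chi(N_h)-t(p-1).
\]
Substituting $\chi(N_g)=2-g$ and $\chi(N_h)=2-h$ and rearranging yields $g-2=p(h-2)+t(p-1)$, a non-negative solution with $h\geqslant 1$. The main obstacle is the bridge supplied by Nielsen realization (without a geometric action there is no branched cover to analyze), together with the orientability argument guaranteeing that the base is the non-orientable surface $N_h$ rather than an orientable one; this is exactly what makes the constant term of the equation come out as $p(h-2)$.
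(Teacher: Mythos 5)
Your proposal is correct and follows essentially the same route as the paper's proof: Nielsen realization to get a genuine $\Zp$-action, pass to the branched quotient, and apply Riemann--Hurwitz. You additionally justify two points the paper leaves implicit --- that the fixed points are isolated rotation points (using that $p$ is odd) and that the quotient surface is non-orientable, so the Euler characteristic reads $2-h$ rather than $2-2h$ --- which is a welcome tightening rather than a divergence in method.
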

\begin{proof}
    By the Nielsen realization theorem \cite{CX24Nil}*{Theorem} there exist a group $G\leqslant \diff(N_g)$ such that $\pi(G)=\Zp$. Thus, $G$ acts on the surface $N_g$ and we obtain a branched cover $q:N_g \to N_g/G$. Let $t$ be the branched points of the covering $q$ and $h$ be the genus of the surface $N_g/G$, then, by Riemann-Hurwitz equation the result follows. 
\end{proof}

The converse of the previous statement holds if $t$ is greater than zero and and its proof relies on surface kernels of non-Euclidean crystallographic groups (NEC groups), which we will review in the following section.

\begin{prop}\label{Prop:RH and ptorsion}
    If $g-2=p(h-2)+t(p-1)$ has an integer solution $(h,t)$ with $h,t\geqslant 1$, then $\modnw$ contains a subgroup of order $p$.
\end{prop}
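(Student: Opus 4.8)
The plan is to convert the arithmetic solution $(h,t)$ into a geometric $\Zp$-action on $N_g$ via an NEC group and a surface kernel epimorphism, and then to observe that such an action supplies an element of order $p$ in the mapping class group. First I would let $\Gamma$ be a proper NEC group with signature $\sig(\Gamma)=(h;-;[(p)^t])$, given by elliptic generators $x_1,\dots,x_t$ and glide reflections $d_1,\dots,d_h$ subject to $x_i^p=1$ and the long relation $x_1\cdots x_t\, d_1^2\cdots d_h^2=1$. Such a cocompact subgroup of $\Isohiper$ exists exactly when its hyperbolic measure is positive, and here
\[
\mu(\Gamma)=2\pi\Big(h-2+t\big(1-\tfrac1p\big)\Big)=\frac{2\pi}{p}\big(p(h-2)+t(p-1)\big)=\frac{2\pi(g-2)}{p},
\]
which is positive since $g\geqslant 3$. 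Thus the Riemann–Hurwitz equation itself certifies that $\Gamma$ is genuinely hyperbolic.

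Next I would construct a surface kernel epimorphism $\theta\colon\Gamma\to\Zp$, that is, a surjection carrying each $x_i$ to an element of order $p$, so that $\ker\theta$ is torsion-free. Setting $\theta(x_i)=1$ for all $i$, the long relation imposes only the single congruence $t+2\sum_j\theta(d_j)\equiv 0 \pmod p$. Because $p$ is odd, $2$ is invertible modulo $p$, and because $h\geqslant 1$ there is at least one glide generator available to absorb this constraint: I would put $\theta(d_1)=-t\cdot 2^{-1}$ and $\theta(d_j)=0$ for $j>1$. The resulting $\theta$ is surjective, since $\theta(x_1)=1$, and sends every $x_i$ to an element of order $p$, hence is a surface kernel epimorphism.

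Then $K:=\ker\theta$ is torsion-free, so it is a surface group, and $\hiper/K$ is a closed surface carrying the free action of $\Gamma/K\cong\Zp$. Multiplicativity of the orbifold Euler characteristic along the degree-$p$ cover $\hiper/K\to\hiper/\Gamma$ yields $\chi(\hiper/K)=p\,\chi^{\mathrm{orb}}(\hiper/\Gamma)=p\big((2-h)-t(1-\tfrac1p)\big)=2-g$, again by the Riemann–Hurwitz equation. It then remains to check that this surface is non-orientable, i.e. that $K$ contains an orientation-reversing isometry: if $h\geqslant 2$ then $d_2\in K$ is a glide reflection, while if $h=1$ then $d_1^p\in K$ because $\theta(d_1^p)=p\,\theta(d_1)=0$, and $d_1^p$ is again orientation-reversing as $p$ is odd. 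Since $K$ is torsion-free, such an element cannot be a reflection and is therefore a glide reflection; hence $\hiper/K$ is non-orientable, and being closed with $\chi=2-g$ it must be $N_g$.

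Finally, $\Zp=\Gamma/K$ acts on $N_g=\hiper/K$ by isometries of the hyperbolic structure, hence by diffeomorphisms, and I would invoke the standard fact that a nontrivial finite-order isometry of a hyperbolic surface is never isotopic to the identity. This makes the induced homomorphism $\Zp\to\modng=\modnw$ injective, producing the desired subgroup of order $p$. I expect the main obstacle to be the identification carried out in the third paragraph: not the existence of $\theta$, which the freedom in the glide generators renders routine, but rather guaranteeing that the uniformised surface is the \emph{non-orientable} $N_g$ and not an orientable surface of the same Euler characteristic. This is precisely where the hypotheses $h,t\geqslant 1$ and the oddness of $p$ enter, through the production of an orientation-reversing element of $K$.
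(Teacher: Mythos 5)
Your proposal is correct and follows essentially the same route as the paper: take an NEC group $\Gamma$ with signature $(h;-;[(p)^t])$, define a surface kernel epimorphism $\theta\colon\Gamma\to\Zp$ by sending the elliptic generators to generators of $\Zp$ and absorbing the long relation into $d_1$ (you invert $2$ mod $p$ where the paper uses a parity trick with an auxiliary $\epsilon$), identify $\ker\theta$ with $\pi_1(N_g)$ via Riemann--Hurwitz, and realize $\Zp$ inside $\modnw$. One small slip: since $t\geqslant 1$ the induced $\Zp$-action on $\hiper/K$ is \emph{not} free (the images of the $x_i$ fix the branch points), but this is harmless because your Euler-characteristic computation already uses the orbifold Euler characteristic of the branched quotient; and your explicit verification that $K$ contains an orientation-reversing element (via $d_2$ or $d_1^p$) is a worthwhile detail that the paper delegates to the cited references on NEC groups.
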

\begin{proof}
   Let $\Gamma$ be the NEC group whose algebraic presentation is:
   $$\Gamma=\langle d_1, \ldots , d_h , x_1, \ldots , x_t \mid x_1^p=x_2^p=\ldots = x_t^p = x_1 \cdot \ldots \cdot x_t \cdot d_1^2 \cdot \ldots \cdot d_h^2=1 \rangle. $$
   Consider the presentation of the group $\Zp=\langle y \mid y^p =1 \rangle$ and define the function $\theta : \Gamma \to \Zp$ given at level of the generators as follows:
   \begin{align*}
       \theta(x_i)& =\begin{cases}
           y^{1+\epsilon} & \text{si } i=1 \\
           y  & \text{si } i=2, \ldots ,t.
       \end{cases} 
       & \theta(d_j)& =\begin{cases}
           y^{\tfrac{-t-\epsilon}{2}} & \text{si } j=1 \\
           1  & \text{si } j=2, \ldots ,h.
        \end{cases}
   \end{align*}
   where $\epsilon=1$ if $t$ is odd or $\epsilon=0$ in other case. Since $\theta:\Gamma\to \Zp$ preserves all the relations of the group $\Gamma$, $\theta$ is a homomorphism that is also surjective. Furthermore, since $\ker (\theta)$ is a normal subgroup of $\Gamma$, it is also a NEC group. 
   By \cite{Bujalance10}*{Theorem 1.2.2}, since the order of the image of the elliptic generators $x_i$ is $p$, the proper periods of $\ker(\theta)$ are equal to one, as $p$ is prime. Thus, there are no elliptic generators in $\ker(\theta)$, and it follows that $\ker(\theta)$ is a surface NEC group. Using Riemann-Hurwitz equation for NEC groups (see \cite{Bujalance10}*{Theorem 1.1.8} and Equation \ref{Eqn:RH:NEC:Groups}) and since $g - 2 = p(h - 2) + t(p - 1)$, we conclude that $\ker(\theta)$ is isomorphic to the fundamental group $\pi_1(N_g)$. Consequently, $\theta: \Gamma \to \Zp$ defines a surface kernel of NEC groups, and by \cite{Singerman71}*{Theorem 1}, $\Zp$ is a subgroup of $\Aut(N_g; \X)$. Therefore, $\Zp$ is also a subgroup of $\modng$.
\end{proof}

Now, we prove Theorem \ref{Thm:Main:Torsion}.

\begin{proof}[Proof of Theorem \ref{Thm:Main:Torsion}]
    We proceed by induction on the number of marked points. Suppose that $\modno$ contains a subgroup of order $p$. The Birman exact sequence for non-orientable surfaces (see \cite{Gram73}*{Proposition 1 and Lemma 1} and \cite{Kork02}*{Theorem 2.1}) provides the following
    $$ 1 \to \pi_1 (N_g) \rightarrow \modno \rightarrow \modnw \to 1.$$
    Since $\pi_1 (N_g)$ is a torsion free group, it follows that $\modnw$ must contain $p$-torsion. Thus, by Proposition \ref{Prop:RH and ptorsion} there exist $h,t\geqslant 1$ such that the Riemann-Hurwitz equation holds,
    which proves the result for $k=1$. Now, suppose the result holds for $\modn$. Using the Birman exact sequence
        $$ 
        1 \to \pi_1 (N_g^{k}) \rightarrow \mathcal{N}_g^{k+1} \rightarrow \mathcal{N}_g^{k} \to 1,
        $$    
    and applying the induction hypothesis, we prove the result for $\mathcal{N}_g^{k+1}$. 
    The converse can be proven similarly to the previous proposition and we omit it. 
\end{proof}


\begin{example}\label{Ex:genus_p} 
We describe some surfaces which has subgroups of order $p$ according to the previous results.

$\vartriangleright$ \textit{Genus ${g=p}$}. 
The equation $g-2 = p(h-2) + t(p-1)$ has the unique solution $(h, t) = (1, 2)$. Therefore, the pure mapping class group $\modnpk$ contains a subgroup of order $p$ for $k = 0, 1, 2$. Additionally, every diffeomorphism of order $p$ on $N_p$ has exactly $2$ fixed points, and the quotient of $N_p$ under the action of this diffeomorphism is $\mathbb{R} P^2$. For $k \geqslant 3$, the pure mapping class group $\modnpk$ does not admit a subgroup of order $p$.

$\vartriangleright$ \textit{Genus ${g=p+1}$}. 
In this case, the equation $g-2 = p(h-2) + t(p-1)$ has the unique solution $(h, t) = (2, 1)$. Thus, the pure mapping class group $\mathcal{N}_{p+1}^k$ has a subgroup of order $p$ for $k = 0, 1$. Consequently, every diffeomorphism of order $p$ on $N{p+1}$ has $1$ fixed point, and the quotient space under the diffeomorphism is a Klein bottle. For $k \geqslant 2$, $\mathcal{N}_{p+1}^k$ does not have a subgroup of order $p$.

$\vartriangleright$ \textit{Genus ${g=2p-1}$}. 
The equation $g-2 = p(h-2) + t(p-1)$ has a unique solution $(h, t) = (1, 3)$. In this case, the pure mapping class group $\mathcal{N}_{2p-1}^k$ admits a subgroup of order $p$ for $k = 0, 1, 2, 3$. However, for $k \geqslant 4$, no such subgroup exists in $\mathcal{N}_{2p-1}^k$.
\end{example}

\section{Conjugacy classes of subgroups of order \texorpdfstring{$p$}{p} in \texorpdfstring{$\modn$}{Ngk}}
\label{Sec:Conjugacy}

In this section, we recall some theory about non-Euclidean crystallographic groups (NEC groups) and their surface kernels epimorphism. We also review the notion of topological equivalence of surface kernels and extend this notion to include surfaces with $k$ marked points $N_g$. Using Nielsen realization theorem a surface kernel is defined for a subgroup of order $p$ of $\modn$, and the notion introduced, combined with the geometric realization of NEC group isomorphisms, leads to a one-to-one correspondence between surface kernels (up to equivalence) and the conjugacy classes of subgroups of order $p$ in $\modn$.

\subsection{Non-euclidean crystallographic groups.} By a non-euclidean crystallographic group (NEC-group for short) we mean a discrete and co-compact subgroup of the group of all isometries of the hyperbolic plane $\hiper$. It turns out that the algebraic structure of such a group $\Gamma$ are encoded in its signature:
\begin{equation}\label{Eqn:Signature}
    \sig(\Gamma):=(g;\pm ;[m_1,\ldots, m_t])
\end{equation}
where $m_i$ are called the proper periods and $g$ is the orbit genus of $\Gamma$. If $m_i=m$ for all $i$, we write $\sig(\Gamma)=(g;\pm;[(m)^t])$. The signature determines a presentation of the group $\Gamma$ by means of generators $x_i$ for $i=1,\ldots,t$; $a_j, b_j$ if $\sig(\Gamma)$ has sign $+$ or $d_j$ if has sign $-$ for $j=1\ldots, g$. These generators satisfy the following relations: 
\begin{align*}
     & x_i^{m_i} =1 & & \text{for }  i=1,\ldots , t; \\
     & x_1\cdot \ldots \cdot x_t\cdot [ a_1, b_1] \cdot  \ldots \cdot [a_g,b_g]  =1 & & \text{if } \sig(\Gamma) \text{ has sign } + \\
     & x_1\cdot \ldots \cdot x_t\cdot d_1^2\cdot \ldots \cdot d_g^2  =1 & & \text{if } \sig(\Gamma) \text{ has sign } - 
\end{align*}
    where $[a,b]=a \cdot b \cdot a^{-1}\cdot b^{-1} .$
    The isometries $x_i$ are elliptic transformations,  $a_j$, $b_j$ are hyperbolic translations and $d_j$ are glide reﬂections. The orbit space $\hiper/\Gamma$ is a hyperbolic orbifold of genus $g$ and it is orientable if the sign is $+$ and non-orientable otherwise. The image on $\hiper/\Gamma$ of the fixed point of the elliptic generator $x_i$ is called cone point of order $m_i$. Every NEC group $\Gamma$ with signature \ref{Eqn:Signature} has associated a fundamental region whose area $\Area(\Gamma)$, called the area of the group, is:
    \[
        \Area(\Gamma)= 2\pi \left( \eta g -2 + \sum_{i=1}^{t} \left( 1-\frac{1}{m_i} \right) \right),
    \]
    where $\eta=1$ if the sign is $-$ or $\eta=2$ otherwise. Notice that this definition can be used to an abstract group $\Gamma$ with the previous presentation, and such a group $\Gamma$ can be realized as a NEC group if and only if $\Area(\Gamma)>0$. 

    If $K$ is a subgroup of a NEC group $\Gamma$ of finite index, then also $K$ is a NEC group and the following Riemann-Hurwitz formula holds:
    \begin{equation}\label{Eqn:RH:NEC:Groups}
        \Area(K)=[\Gamma:K] \cdot \Area(\Gamma)
    \end{equation}
    
    Torsion free NEC group $\Gamma$ is called \textit{surface NEC group} and NEC groups without orientation reversing elements are \textit{Fuchsian groups}. 

    Recall that if a NEC group $\Gamma$ has signature equal to $(g;-;[m_1,\ldots, m_t])$, then the group $\Gamma$ has the following presentation 
    \begin{equation} \label{Eqn:Canonical:Presentation}
        \langle d_1, \ldots, d_h , x_1, \ldots, x_t \mid x_1^{m_1}= \ldots = x_t^{m_t} =  x_1 \cdot \ldots \cdot x_t \cdot d_1^2 \cdot \ldots d_h^2=1 \rangle.
    \end{equation}  
    We call the above presentation \textit{a canonical presentation} of $\Gamma$ and the elements $x_i$ and $d_j$ are called \textit{canonical generators.} 

\subsection{Surface kernel epimorphisms of NEC groups.} Let $\Gamma$ be a NEC group, $G$ be a finite group and $\theta: \Gamma \to G$ be an epimorphism. We say that $\theta$ is a \textit{surface kernel epimorphism}
if and only if $\ker(\theta)$ is a surface NEC group. 

Two surface kernel epimorphisms $\theta_i:\Gamma_i\to G_i,$ $i=1,2,$ are \textit{topologically equivalent} if and only if there exist isomorphism $\varphi: \Gamma_1\to \Gamma_2$ and $\psi:G_1 \to G_2$ such that the following diagram is commutative
\[  
\xymatrix{
\Gamma_1 \ar[r]^-{\theta_1} \ar[d]_-{\varphi}  &   G_1 \ar[d]^-{\psi}  \\
\Gamma_2 \ar[r]_-{\theta_2}     &   G_2
}
\]

Surface kernels of Fuchsian groups classify actions of finite groups on orientable surfaces $S_g$, see for instance \cite{Broug91}*{Proposition 2.2}, \cite{Broughton22Equiv} and \cite{Brouhton22Fut}. This classification, along with Nielsen's realization theorem, connects surface kernels to conjugacy classes of finite subgroups of $\modsg$. One of our objectives is to extend these results to the context of non-orientable surfaces $N_g$ with $k$ marked points and apply them to the case of subgroups of order $p$ of $\modn$. First, we extend the notion of surface kernels to include pure mapping class groups with marked points $\modn$. This notion will be useful in Theorem \ref{Thm:Conjugacy:Classes:Surface:Kernels} in order to prove that there is a one-to-one correspondence between surface kernels and conjugacy classes of subgroups of order $p$ in $\modn$. This correspondence can also be extended to finite subgroups of the pure mapping class group $\modn$ or the full mapping class group $\modnk$ where marked points can be permuted by the diffeomorphism. However, these extensions are not pursued here, as they are not required for the results and applications discussed. It is also important to note that the results of this section apply in the case where $k=0$, that is, to $\modnw$.

Note that by Theorem \ref{Thm:Main:Torsion}, the existence of $p$-torsion in $\modn$ is analogous to finding solutions to the Riemann-Hurwitz equation. Considering this, and given that our focus is on studying conjugacy classes of subgroups of order $p$ through surface kernels, we introduce the following notation.

\begin{notation}
    Let $p$ be an odd prime and $h,t\geqslant 1$ be a solution to Riemann-Hurwitz equation 
    $
        g-2=p(h-2)+t(p-1).
    $
    We denote by $\SKp$ the set formed by all the surface kernels $\theta: \Gamma \to G $ such that $G\cong \Zp$, $\ker(\theta)\cong \pi_1(N_g)$ and $\sig(\Gamma)=(h;-;[(p)^t]).$
\end{notation} 

Recall that if a NEC group $\Gamma$ has a signature $(h; -; [(p)^t])$, then the group $\Gamma$ has the following presentation:
\begin{equation}\label{Eqn:Canonical:Pres:Gammap}
    \langle d_1, \ldots, d_h , x_1, \ldots, x_t \mid x_1^p = \ldots = x_t^p =  x_1 \cdot \ldots \cdot x_t \cdot d_1^2 \cdot \ldots \cdot d_h^2 = 1 \rangle.
\end{equation}    
For the rest of this paper, we fix an abstract group $\Gamma$ with the above presentation, for two key reasons. First, the action of a subgroup of order $p$ in $\modn$ yields a specific Riemann-Hurwitz solution $(h, t)$. To understand the conjugacy classes of this subgroup, we study the set of surface kernels $\SKp$ corresponding to this solution, where the NEC groups of the surface kernels has the same signature as $\Gamma$. Second, specifying a set of marked points on the surface will be analogous to marking the first $k$ elliptic generators of the NEC group. Therefore, having a fixed presentation of the group $\Gamma$ with $k$ marked elliptic generators is useful for maintaining control of the elliptic generators over all NEC groups with the same signature.

\begin{rmk}
    If $\theta: \Gamma' \to G$ belongs to $\SKp$, then there is an isomorphism between $\Gamma$ and $\Gamma'$. By choosing an isomorphism $G \to \Zp$, $\theta$ is topologically equivalent to a surface kernel $\theta': \Gamma \to \Zp$. In this way, surface kernels in $\SKp$ can be regarded, up to topological equivalence, as having a common source $\Gamma$ and target $\Zp$.
\end{rmk}

\begin{defi} \label{Def:SurfaceKernel:Marked}
    Let $\theta_1: \Gamma \to \Zp $ and $\theta_2: \Gamma \to \Zp$ be two surface kernels and $h\geqslant 1, t\geqslant 1$ such that $(h,t)$ satisfies the Riemann-Hurwitz equation \eqref{Eqn:Riemann-Hurwitz} and  $\theta_1,\theta_2\in \SKp$. Let $k$ be a non-negative integer with $k\leqslant t$.   We say that $\theta_1$ and $\theta_2$ \textit{are topologically equivalent relative to the $k$ marked elliptic generators} if there exist two isomorphisms $ \varphi: \Gamma \to \Gamma $ and $ \psi : \Zp \to \Zp $ such that $\varphi (x_i) $ belongs to the conjugacy class of some non-trivial power of $ x_i$ for each $i=1,\ldots, k$ and the following diagram is commutative:
    \begin{equation}\label{Eqn:Diagram_equivalence_of_SK}
        \xymatrix{
        \Gamma \ar[r]^-{\theta_1} \ar[d]_-{\varphi} & \Zp \ar[d]^-{\psi} \\
        \Gamma \ar[r]^-{\theta_2} & \Zp 
        }
    \end{equation}
    We denote by $\SKtopk$ the set of surface kernels up to topologically equivalence relative to the $k$ marked elliptic generators.
\end{defi}

\begin{rmk}\label{Rmk:conjugated_to_x_or_x-1}
   {Given an isomorphism $\varphi:\Gamma \to \Gamma$, then $\varphi(x_i)$ is conjugated to $x_j$ or $x_j^{-1}$ for each $i=1,\ldots, t$, (see \cite{Bujalance15}*{Theorem 1}). Particularly, given two surface kernels $\theta_1:\Gamma \to \Zp$ and $\theta_2:\Gamma \to \Zp$ that are topologically equivalent relative to the $k$ marked elliptic generators by $\varphi:\Gamma \to \Gamma$, then $\varphi(x_i)$ is conjugated to $x_i$ or $x_i^{-1}$ for all the marked generators $i=1,\ldots, k$, according to the previous comment.} 
\end{rmk}


\subsection{Surface kernels and mapping class groups}

In this part, we define a surface kernel $\theta_G: \Gamma \to \Zp$ for each subgroup of order $p$ in $\modn$. Next, we show that the notion of topological equivalence relative to the $k$ marked elliptic generators, corresponds to conjugation in the set of subgroups of order $p$ in $\modn$.
We proceed by constructing a surface kernel $\theta_G$, and establish a series of lemmas that verify that the $k$ marked elliptic generators correspond to the marked points on $N_g$. Finally, we prove that the notion of topological equivalence introduced in Definition \ref{Def:SurfaceKernel:Marked} is similar to conjugacy classes in this setting.



Let $G\leqslant \modn$ be a subgroup of order $p$. By Nielsen realization Theorem, there exists a group $\w{G}\leqslant \pdiffnk$ and a dianalytic structure $\X\in \M(N_g)$ such that $\pi(\w{G})=G$ and $\w{G}\leqslant \Aut(N_g;\X)$, where $\pi: \pdiff(N_g;k)\to \modn$ is the natural projection. Thus, there exists $f\in \Aut(N_g;\X)$ such that $\w{G}=\langle f \rangle$.
Note that $\w{G}$ acts on the surface $N_g$, and the set of singular points of this action equals the set of fixed points of $f$. Denote by $\{z_1,\ldots,z_k,\ldots ,z_t \}$ the fixed points of $f$ with the convention that the first $k$ points are marked points of the surface $N_g$.

Let $K_G$ be a surface NEC group such that the Klein surface $(N_g; \X)$ is isomorphic to the Klein surface $\hiper/K_G$, where $\hiper/K_G$ is endowed with the dianalytic structure induced by the surface NEC group $K_G$. Specifically, there exists a dianalytic homeomorphism 
\[
    w: (N_g; \X) \to \hiper/ K_G.
\]
Thus, the Klein surface $(N_g;\X)$ can be regarded as the quotient $\hiper/ K_G$ with set of marked points equal to $\{ w(z_1), \ldots, w(z_k) \}$. Additionally, there exists a bijection 
\begin{equation}\label{Eqn:AutNg:AutH2/K}
    \epsilon: \Aut(N_g;\X) \to \Aut(\hiper / K_G)
\end{equation}
defined by $y\mapsto w\circ y \circ w^{-1}$, and denote ${\wk{G}}:=\epsilon(\w{G})\leqslant \Aut(\hiper / K_G)$. Let $q_{G}:\hiper\to \hiper/K_G$ be the quotient map and consider the following NEC group:
\begin{equation}\label{Eqn:NECGroup:GammaG}
    \Gamma_G:=\{ \gamma \in \Isohiper \mid q_{G} \circ \gamma= (\epsilon(f^m)) \circ q_{G} \text{ for some } 0\leqslant m < p \} .    
\end{equation}

Note that $K_G\unlhd \Gamma_G$, hence for each $\gamma\in \Gamma_G$, we obtain an induced homeomorphism $\theta(\gamma): \hiper / K_G \to \hiper / K_G $ defined by $\zeta\cdot K_G \mapsto \gamma(\zeta) \cdot K_G$ for all $\zeta\in \hiper$. Moreover, by definition of $\Gamma_G$, it follows that $\theta(\gamma)\in \wk{G}$. Thus, we have defined an epimorphism
    \begin{equation}\label{Eqn:SurfaceKernel}
        \theta: \Gamma_G \to \wk{G} 
    \end{equation}
with $\ker(\theta )= K_G$, that is, $\theta$ is a surface kernel epimorphism. Recall that the group $\w{G}=\langle f \rangle$ acts freely on $N_g$ except for the $t$ fixed points of $f$, and the orbit space $ N_g/\w{G} \cong \hiper / \Gamma_G $ is homeomorphic to a non-orientable surface $N_h$ of genus $h\geqslant 1$ such that $h$ satisfies the Riemann-Hurwitz equation:
\begin{equation}\label{Eqn:Riemann-Hurwitz}
    g-2=p(h-2)+t(p-1).    
\end{equation}
Hence $\sig(\Gamma_G)=(h;-;[(p)^t])$ and the branched covering $\hiper/K_G\to \hiper/\Gamma_G$ has $t$ branched points, namely $w(z_1), \ldots, w(z_k), \ldots , w(z_t)$, where the first $k$ points are the marked points on $\hiper/K_G$. Note that the covering transformations are given by $\wk{G}=\langle w\circ f \circ w^{-1} \rangle$. 
%
%
Since groups with signature equal to $(h;-;[(p)^t])$ are isomorphic to $\Gamma$, there exists an isomorphism $ \lambda : \Gamma \to \Gamma_G $. 
In fact, in Lemma \ref{Lem:Isomorphism_one_marked_point}, we show that the isomorphism can be always chosen in a way that the first of the generators $x_1, \ldots, x_k$ contains the information of the marked points. To start with this procedure, we make the following remark. 

\begin{rmk}
For any isomorphism $\lambda: \Gamma\to \Gamma_G$, denote by $\{ \zeta_1, \ldots, \zeta_t \}$ the fixed points of the elliptic generators $ \lambda(x_1) , \ldots , \lambda(x_t) $. Note that the classes $\zeta_1\cdot K_G, \ldots, \zeta_t\cdot K_G $ are the branched points of the covering $\hiper/ K_G \to \hiper/ \Gamma_G$ which in turns equal to the fixed points of $w\circ f\circ w^{-1}$ (i.e. $\{w(z_1),\ldots, w(z_t) \}$). Then, the elliptic generators are in one to one correspondence with the set of fixed points of $f$. 
\end{rmk}


Now, we will prove two lemmas with the aim of defining an appropriate surface kernel $\theta_G:\Gamma\to \Zp$, such that it includes the information of the marked points in the first elliptic generators of the NEC group $\Gamma$. 

\begin{lem}\label{Lem:Fixed_points_same_orbit}
    Let $\lambda: \Gamma \to \Gamma_G$ be an isomorphism, $ \zeta_i\in \hiper$ be the fixed point of the elliptic generator $ \lambda(x_i) $ and $\zeta_i\cdot K_G=w(z_j) $ for some $1\leqslant j \leqslant t$. Then, for any $\zeta\in \zeta_i\cdot \Gamma_G$ we have that $\zeta\cdot K_G=w(z_j) $.
\end{lem}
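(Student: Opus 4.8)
The plan is to exploit that the branched covering $\hiper/K_G \to \hiper/\Gamma_G$ is regular with deck group $\Gamma_G/K_G \cong \Zp$, realized on $\hiper/K_G$ through the induced homeomorphisms $\theta(\gamma)$ of the surface kernel \eqref{Eqn:SurfaceKernel}. First I would recall that, by construction, each $\gamma \in \Gamma_G$ induces the homeomorphism $\theta(\gamma)\in \wk{G}$ of $\hiper/K_G$ given by $\zeta\cdot K_G \mapsto \gamma(\zeta)\cdot K_G$. Writing $q_G(\zeta)=\zeta\cdot K_G$, this means $q_G(\gamma(\zeta_i)) = \theta(\gamma)(\zeta_i\cdot K_G) = \theta(\gamma)(w(z_j))$, using the hypothesis $\zeta_i\cdot K_G = w(z_j)$. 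Hence the statement reduces to showing that every induced map $\theta(\gamma)$ fixes the point $w(z_j)$.

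The key step is to observe that $w(z_j)$ is a fixed point of the entire group $\wk{G}$. Indeed, $w(z_j)$ lies among the $t$ branched points $w(z_1),\ldots,w(z_t)$, which are precisely the fixed points of the generator $w\circ f\circ w^{-1}$ of $\wk{G}$. Since $\wk{G}\cong \Zp$ is cyclic of prime order $p$, every nontrivial element generates $\wk{G}$, so the fixed-point set of the generator coincides with the fixed-point set of the whole group; equivalently, in a regular degree $p$ branched cover the ramification over each cone point is total, so the stabilizer of $w(z_j)$ in the deck group $\wk{G}$ is all of $\Zp$. Consequently $\theta(\gamma)(w(z_j)) = w(z_j)$ for every $\gamma\in\Gamma_G$, because each $\theta(\gamma)$ belongs to $\wk{G}$ by definition of $\Gamma_G$.

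Combining the two observations, for any $\zeta\in\zeta_i\cdot\Gamma_G$ I would write $\zeta=\gamma(\zeta_i)$ with $\gamma\in\Gamma_G$ and conclude
\[
    \zeta\cdot K_G = q_G(\gamma(\zeta_i)) = \theta(\gamma)(w(z_j)) = w(z_j),
\]
which is the desired equality. The main obstacle is the middle step: one must argue that a branched point is fixed by the \emph{entire} deck group, rather than merely having nontrivial stabilizer, and this is exactly the place where the primality of $p$ enters in an essential way.
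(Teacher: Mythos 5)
Your proof is correct and follows essentially the same route as the paper's: both arguments come down to the fact that the degree-$p$ covering $\hiper/K_G \to \hiper/\Gamma_G$ is totally ramified over the branch point, which the paper phrases as ``the branch point has only one preimage, namely $w(z_j)$'' and you phrase equivalently as ``the deck group $\wk{G}$ fixes $w(z_j)$.'' Your version is somewhat more explicit (and note that the direction you actually need is immediate once $w(z_j)$ is known to be fixed by the generator $w\circ f\circ w^{-1}$, since it is then fixed by every power; primality is only needed if one starts from a merely nontrivial stabilizer), but there is no substantive difference in the argument.
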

\begin{proof}
    This proof is straightforward since the orbits of the fixed points $\zeta_i \cdot \Gamma_G $ are the branched points of the covering $\hiper/K_G \to \hiper / \Gamma_G$, and this point only have one preimage which is equal to $w(z_j)$.
\end{proof}



\begin{lem}\label{Lem:Isomorphism_one_marked_point} 
    Let $\Gamma_G$ be a NEC group as in the previous description and $w:N_g\to \hiper/K_{G}$ be a dianalytic homeomorphism. Then, there exist an isomorphism $\lambda: \Gamma \to \Gamma_G$ such that the class $\zeta_i\cdot K_G$ of the fixed point $\zeta_i\in \hiper$ of the elliptic generator $\lambda(x_i)$ is equal to $w(z_i)$ for each $i=1,\ldots, k$.
\end{lem}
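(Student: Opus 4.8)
The plan is to build the required isomorphism by correcting an arbitrary one. Since $\Gamma$ and $\Gamma_G$ both have signature $(h;-;[(p)^t])$, fix any isomorphism $\lambda_0:\Gamma\to\Gamma_G$. By the Remark preceding Lemma \ref{Lem:Fixed_points_same_orbit}, the fixed points $\zeta_1,\dots,\zeta_t$ of the elliptic generators $\lambda_0(x_1),\dots,\lambda_0(x_t)$ have classes $\zeta_i\cdot K_G$ that run bijectively over the branched points $\{w(z_1),\dots,w(z_t)\}$; hence there is a permutation $\sigma\in\Sym(t)$ with $\zeta_i\cdot K_G=w(z_{\sigma(i)})$ for all $i$. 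The idea is now to precompose $\lambda_0$ with an automorphism of $\Gamma$ that rearranges the elliptic generators so that, after the correction, generator $x_i$ is matched to $w(z_i)$ for $i=1,\dots,k$.

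First I would show that $\Gamma$ admits automorphisms realizing any permutation of its elliptic generators up to conjugacy, using that all proper periods equal $p$. For each $1\le i<t$, the assignment $x_i\mapsto x_{i+1}$, $x_{i+1}\mapsto x_{i+1}^{-1}x_i x_{i+1}$, with the remaining $x_\ell$ and all $d_j$ fixed, preserves the defining relations: the long relation is unchanged because $x_i x_{i+1}\mapsto x_{i+1}\bigl(x_{i+1}^{-1}x_i x_{i+1}\bigr)=x_i x_{i+1}$, and the order relations hold because $(x_{i+1}^{-1}x_i x_{i+1})^p=x_{i+1}^{-1}x_i^p x_{i+1}=1$. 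This substitution is invertible (its inverse is $x_i\mapsto x_i x_{i+1}x_i^{-1}$, $x_{i+1}\mapsto x_i$), so it is an automorphism of $\Gamma$ interchanging $x_i$ and $x_{i+1}$ up to conjugacy. As adjacent transpositions generate $\Sym(t)$, composing such automorphisms yields, for any $\tau\in\Sym(t)$, an automorphism $\alpha_\tau$ of $\Gamma$ with $\alpha_\tau(x_i)$ conjugate to $x_{\tau(i)}$ for every $i$.

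Next I would pick $\tau\in\Sym(t)$ with $\tau(i)=\sigma^{-1}(i)$ for $i=1,\dots,k$ and set $\lambda:=\lambda_0\circ\alpha_\tau$. Then $\lambda(x_i)=\lambda_0(\alpha_\tau(x_i))$ is conjugate in $\Gamma_G$ to $\lambda_0(x_{\tau(i)})$, so the fixed point of $\lambda(x_i)$ lies in the $\Gamma_G$-orbit of $\zeta_{\tau(i)}$. By Lemma \ref{Lem:Fixed_points_same_orbit}, every point of that orbit has the same class in $\hiper/K_G$, namely $\zeta_{\tau(i)}\cdot K_G=w(z_{\sigma(\tau(i))})$; and for $1\le i\le k$ this equals $w(z_{\sigma(\sigma^{-1}(i))})=w(z_i)$, which is exactly the conclusion sought.

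The main obstacle is the realization step: ensuring that a permutation of the (equal-order) elliptic generators comes from a genuine automorphism of $\Gamma$, not merely an abstract relabeling. I handle this with the explicit half-twist substitutions above, which is the cleanest route for the canonical presentation \eqref{Eqn:Canonical:Pres:Gammap}; alternatively one may invoke the description of NEC-group automorphisms in \cite{Bujalance15}. A minor point is that precomposition by $\alpha_\tau$ moves each relevant fixed point only within a $\Gamma_G$-orbit, so Lemma \ref{Lem:Fixed_points_same_orbit} applies verbatim; this holds because conjugating an elliptic element translates its fixed point by the conjugating element, and because an elliptic element and its inverse share the same fixed point (so even images conjugate to $x_{\tau(i)}^{-1}$ cause no trouble).
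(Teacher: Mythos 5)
Your proposal is correct and follows essentially the same strategy as the paper: fix an isomorphism $\Gamma\to\Gamma_G$, precompose with an automorphism of $\Gamma$ that permutes the elliptic generators up to conjugacy (possible because all proper periods equal $p$), and conclude via Lemma \ref{Lem:Fixed_points_same_orbit} since conjugation only moves each fixed point within its $\Gamma_G$-orbit. The sole difference is organizational — the paper proceeds by induction on $k$ using a cyclic-shift automorphism and leaves the inductive step to the reader, whereas you realize an arbitrary permutation in one pass from explicit adjacent-transposition substitutions, which if anything is the more complete write-up.
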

\begin{proof}
    Since the signature $\sig(\Gamma_G)=(h;-;[(p)^t])$, it follows by \cite{Wilkie66}*{Theorem 4} that there exist an isomorphism $\rho: \Gamma \to \Gamma_G$. We proceed by induction on $k$.
    Case $k=1$. If $\rho$ satisfies the condition of the lemma, let $\lambda=\rho$, and the proof is complete. Otherwise, there exists $m\in \{2,\ldots , t\}$ such that the class $\zeta_1\cdot K_G$ of the fixed point $\zeta_1\in \hiper$ of $\rho(x_m)$ is equal to $w(z_1)$. Define the isomorphism $\varphi: \Gamma \to \Gamma$ as follows:
    \begin{align*}
        \varphi(x_i) &  = \begin{cases}
            (x_1\cdot \ldots \cdot x_{m-1}) \cdot x_m \cdot (x_1\cdot \ldots \cdot x_{m-1})^{-1} & \text{if } i=1, \\
            x_1\cdot x_i \cdot {x_1}^{-1} & \text{if } 2\leqslant i < m,  \\
            x_1 & \text{if } i=m, \\
            x_i & \text{if } m<i\leqslant t,
        \end{cases} 
    \end{align*}
    and $\varphi(d_j) =  d_j \text{ for } j=1,\ldots, h,$ and let be $ \lambda:=\rho\circ \varphi :\Gamma \to \Gamma_G$. By construction $\lambda(x_1)=\rho(\varphi(x_1)) = \rho(x_1\cdot \ldots \cdot x_{m-1})\cdot \rho(x_m) \cdot (\rho  (x_1\cdot \ldots \cdot x_{m-1}))^{-1}$. Thus, if $u:= \rho  (x_1\cdot \ldots \cdot x_{m-1})\in \Gamma_G$, then $u(\zeta_1)\in \hiper $ is the fixed point of $\lambda(x_1)$. By Lemma \ref{Lem:Fixed_points_same_orbit},  $u(\zeta_1)\cdot K_G = \zeta_1\cdot K_G = w(z_1)$, satisfying the condition of the lemma.

    The proof of the case $k=1$ can be adapted to the general case of $k$, defining a variation of $\varphi$ and using that we have constructed an isomorphism for $k-1$ marked points. Thus by induction the lemma holds.
\end{proof}


We are in condition to define a surface kernel epimorphism $\theta:\Gamma \to \Zp$ for each subgroup $G$ of order $p$ of $\modn$ which acts on the surface $N_g$ and the orbit space of this action is a non-orientable surface of genus $h$ with $t$ conic points.

\begin{cons}[Surface kernel] \label{C:SurfaceKernel}
Let $G \leqslant \modn$ a subgroup of order $p$. Then by Theorem \ref{Thm:Main:Torsion} there exist $h,t\geqslant 1$ such that satisfies the Riemann-Hurwitz equation. We define a surface kernel $\theta_G:\Gamma \to \Zp$ in $\SKp$ as the composition of the following maps:
\begin{equation}\label{Eqn:Surface_Kernel_Def}
    \xymatrix {
    \underset{\leqslant\Aut(\hiper)}{\Gamma_G \ar[r]^-{\theta}}       &  \underset{\leqslant\Aut(\hiper/K_G)}{\wk{G}} \ar[r]^-{\varepsilon}  & \underset{\leqslant\diffnk}{\w{G}} \ar[r]^-{\pi}  &  \underset{\leqslant\modnk}{G} \ar[d]^-{\rho}  \\
    \Gamma  \ar[rrr]^-{\theta_G} \ar[u]^-{\lambda}    &                   &           &   \Zp 
}
\end{equation}
where $\rho$ is any isomorphism from $G$ to $\Zp$, $\pi:\phome(N_g;k)\to \modn$ is the natural projection, $\varepsilon$ and  $\theta$ are defined in Equations \eqref{Eqn:AutNg:AutH2/K} and \eqref{Eqn:SurfaceKernel} respectively,   and $\lambda: \Gamma \to \Gamma_G$ is the isomorphism of Lemma \ref{Lem:Isomorphism_one_marked_point}. Notice that $\theta_G$ has the special condition that the isomorphism $\lambda:\Gamma\to \Gamma_G$ contains the information of the marked point. 
\end{cons}




Using the previous construction, we establish a connection between surface kernels up to topologically equivalence relative to $k$ marked generators and conjugacy classes of subgroups of order $p$ of $\modn$. Let $G_1,G_2\leqslant \modn$ be subgroups of order $p$. We denote by $\Gamma_\ell=\Gamma_{G_\ell}$ for $\ell=1,2$ the NEC groups defined in Equation \eqref{Eqn:NECGroup:GammaG} and $\lambda_\ell: \Gamma\to \Gamma_{G_\ell}$ the isomorphism of Lemma \ref{Lem:Isomorphism_one_marked_point}. With this notation the following result holds.

\begin{lem}\label{Lem:Injectivity_of_PSI}
Let $\varphi: \Gamma_1\to \Gamma_2$ be an isomorphism such that $\varphi(\lambda_1(x_i))$ is conjugated to a power of $ \lambda_2(x_i) $ for $i=1,\ldots, k$. Then $G_1$ and $G_2$ are conjugated in $\modn$.
\end{lem}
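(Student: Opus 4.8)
The plan is to lift the algebraic data to the hyperbolic plane, realize $\varphi$ by a hyperbolic homeomorphism, descend it to a conjugating diffeomorphism of $N_g$, and finally read off from the hypothesis on the first $k$ elliptic generators that this diffeomorphism fixes the marked points. Throughout, write $K_\ell := \ker(\theta_\ell)$ for the two surface NEC subgroups, so that $w_\ell : N_g \to \hiper/K_\ell$ is the dianalytic homeomorphism of Construction \ref{C:SurfaceKernel} and $\w{G_\ell} = \langle f_\ell\rangle$ acts on $N_g$ with $\Gamma_\ell/K_\ell \cong \Zp$ as deck group of the branched cover $\hiper/K_\ell \to \hiper/\Gamma_\ell$.

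First I would geometrically realize $\varphi$. Since $\varphi : \Gamma_1 \to \Gamma_2$ is an isomorphism of NEC groups sharing the signature $(h;-;[(p)^t])$, the geometric realizability of NEC isomorphisms (the tool recalled at the start of this section) yields a homeomorphism $F : \hiper \to \hiper$ with $F\gamma F^{-1} = \varphi(\gamma)$ for all $\gamma \in \Gamma_1$; in particular $F\Gamma_1 F^{-1} = \Gamma_2$. The decisive point is that $F$ carries one surface kernel onto the other, $F K_1 F^{-1} = \varphi(K_1) = K_2$. This is where the surface-kernel compatibility underlying $\varphi$ is used: the commutative square of the topological equivalence (Definition \ref{Def:SurfaceKernel:Marked}) gives $\theta_2\circ\varphi = \psi\circ\theta_1$ for an automorphism $\psi$ of $\Zp$, whence $\varphi(\ker\theta_1) = \ker\theta_2$. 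Hence $F$ descends to a homeomorphism $\bar F : \hiper/K_1 \to \hiper/K_2$ intertwining the two $\Zp$-actions through the isomorphism $\Gamma_1/K_1 \to \Gamma_2/K_2$ induced by $\varphi$.

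Transporting to the surface, I would set $\Phi := w_2^{-1}\circ \bar F\circ w_1 : N_g \to N_g$. By construction $\Phi$ conjugates the covering group $\langle w_1 f_1 w_1^{-1}\rangle$ to $\langle w_2 f_2 w_2^{-1}\rangle$, so $\Phi\, \w{G_1}\,\Phi^{-1} = \w{G_2}$ inside $\diffnk$; projecting to the mapping class group gives $[\Phi]\,G_1\,[\Phi]^{-1} = G_2$. It then remains to check that $\Phi$ fixes the marked points, so that $[\Phi]$ lies in $\modn$. Here the hypothesis enters: by Remark \ref{Rmk:conjugated_to_x_or_x-1} the assumption that $\varphi(\lambda_1(x_i))$ is conjugate to a power of $\lambda_2(x_i)$ means $\varphi(\lambda_1(x_i)) = g\,\lambda_2(x_i)^{\pm 1} g^{-1}$ for some $g\in\Gamma_2$ and each $i\leqslant k$. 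Thus $F$ sends the fixed point of $\lambda_1(x_i)$ into the $\Gamma_2$-orbit of the fixed point of $\lambda_2(x_i)$; by Lemma \ref{Lem:Fixed_points_same_orbit} this whole orbit maps to a single branch point of $\hiper/K_2 \to \hiper/\Gamma_2$, and by the normalization in Lemma \ref{Lem:Isomorphism_one_marked_point} that branch point is $w_2(z_i)$. Therefore $\bar F\big(w_1(z_i)\big) = w_2(z_i)$, i.e.\ $\Phi(z_i) = z_i$ for $i = 1,\dots,k$. Since the first $k$ fixed points of $f_1$ and of $f_2$ are precisely the common marked points of $N_g$, $\Phi$ fixes every marked point, so $[\Phi]\in\modn$ and $G_1, G_2$ are conjugate in $\modn$.

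I expect the descent to be the delicate part: one must guarantee that the realized $F$ respects the surface kernels ($FK_1F^{-1} = K_2$), so that it induces a map of the branched covers $\hiper/K_\ell$ and not merely of the base orbifolds $\hiper/\Gamma_\ell$, and one must track the labelling so that the realized map carries the $i$-th marked fixed point to the $i$-th marked fixed point rather than merely permuting the branch locus. Both reductions are precisely what Lemmas \ref{Lem:Fixed_points_same_orbit} and \ref{Lem:Isomorphism_one_marked_point} are designed to supply.
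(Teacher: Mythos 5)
Your proof is correct and follows essentially the same route as the paper's: geometric realization of $\varphi$ by a homeomorphism of $\hiper$ (Macbeath), descent of that homeomorphism to a map $\hiper/K_1\to\hiper/K_2$ and then to a diffeomorphism of $N_g$ conjugating $\w{G}_1$ to $\w{G}_2$, and the use of Lemmas \ref{Lem:Fixed_points_same_orbit} and \ref{Lem:Isomorphism_one_marked_point} together with the hypothesis on $\varphi(\lambda_1(x_i))$ to conclude that this diffeomorphism fixes the marked points. The only difference is cosmetic: you make explicit the step $\varphi(K_1)=K_2$ (via the surface-kernel compatibility), which the paper's proof asserts without comment.
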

\begin{proof}
    By \cite{Macbeath67}*{Theorem 3}, the isomorphism $\varphi: \Gamma_{1}\to \Gamma_{2} $ can be realized geometrically, that is, there exists an homeomorphism $ \tau: \hiper \to \hiper $ such that $ \varphi(\gamma)= \tau\cdot \gamma \cdot \tau^{-1} $ for all $\gamma \in \Gamma_{1}$. Since $\ker(\theta_{1})\unlhd \Gamma $, we have that $ \varphi|_{\ker(\theta_{1})} : \ker(\theta_{1}) \to \ker(\theta_{2}) $ is also an isomorphism and is realized geometrically by $\tau$. 
    Thus, there exist $ h : \hiper/ K_1 \to \hiper / K_2 $ and $\widehat{h}: \hiper / \Gamma_1 \to \hiper / \Gamma_2 $ projections of $\tau:\hiper\to\hiper$ to the coverings $q_{K_\ell}: \hiper \to \hiper/K_\ell $ and $q_{\Gamma_\ell}: \hiper \to \hiper/\Gamma_\ell $ respectively. 
    Let $i\in \{ 1,\ldots, k\}$. Observe that 
    \[
        \varphi(\lambda_1(x_i)) = \tau\cdot \lambda_1(x_i) \cdot \tau^{-1}.
    \] 
    On the other hand, there exist $u_i\in \Gamma_2$ and $1 \leqslant m_i < p$ such that $\varphi(\lambda_1(x_i))=u_i \cdot \lambda_2(x_i^{m_i}) \cdot {u_i}^{-1}$. Thus 
    \[
        \tau\cdot \lambda_1(x_i) \cdot \tau^{-1} = u_i \cdot \lambda_2(x_i^{m_i}) \cdot {u_i}^{-1}. 
    \]
    Let $\zeta_i\in \hiper$ be the fixed point of $\lambda_1(x_i)$. From the equation above, it follows that ${u_i}^{-1}(\tau(\zeta_i))$ is the fixed point of $ \lambda_2(x_i^{m_i}) $. By Lemma \ref{Lem:Isomorphism_one_marked_point} we have that $w_2(z_i) = {u_i}^{-1}(\tau(\zeta_i))\cdot K_2$. Finally, applying Lemma \ref{Lem:Fixed_points_same_orbit} and definition of $h:\hiper/K_1\to \hiper/ K_2$ we conclude that 
    \[ 
    w_2 (z_i) = {u_i}^{-1}(\tau(\zeta_i))\cdot K_2 = \tau(\zeta_i)\cdot K_2 = h(\zeta_i\cdot K_2)=h(w_1(z_i)).
    \]
    Since this holds for each $i\in \{1,\ldots,k\}$, it follows that $ \varepsilon(h)= w_2^{-1}\circ h \circ w_1 \in \diffnk $. Therefore $[\varepsilon(h)]\in \modn$ and 
    $
        [\varepsilon(h)] G_1 [\varepsilon(h)^{-1}] = G_2.
    $
\end{proof}



Now, we prove a result that will be useful and whose reference is non-standard.

\begin{lem} \label{Thm:TeichExtremalProblem}
    Let be $\alpha \in \modn$ and $\X_1,\X_2 $ two dianalytic structures in $N_g$. Then there exists $f:(N_g,\X_1)\to (N_g,\X_2) $ such that $f\in \alpha$ and satisfies the following condition:

    \begin{enumerate}
        \item \label{Con:one} Given $ h_i \in \Aut(N_g;\X_i)\cap \diff(N_g;k)$ for $i=1,2$ such that $h_1 \circ f \circ h_2 \simeq f$ relative to the marked points, then $h_1 \circ f \circ h_2 = f $.
    \end{enumerate}
\end{lem}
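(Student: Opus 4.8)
The plan is to let $f$ be the \emph{extremal} (Teichm\"uller) quasiconformal representative of $\alpha$ from $(N_g,\X_1)$ to $(N_g,\X_2)$ and to derive condition~\eqref{Con:one} from the uniqueness of such a representative. Since $g\geqslant 3$, the surface $N_g$ (with its marked points) is hyperbolic, so the associated Teichm\"uller theory is non-degenerate. To bring the classical theory to bear in the dianalytic setting, I would first pass to the orientation double cover $p:\w{N}_g\to N_g$ with (fixed-point-free) deck involution $\sigma$. Each dianalytic structure $\X_i$ lifts to a $\sigma$-anti-invariant conformal structure $\w{\X}_i$ on $\w{N}_g$, the $k$ marked points lift to a $\sigma$-invariant set of $2k$ marked points, and $\alpha$ lifts to a $\sigma$-equivariant mapping class $\w{\alpha}$ relative to the marked points.

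On $\w{N}_g$ I would invoke Teichm\"uller's existence and uniqueness theorem for surfaces of finite analytic type: in the class $\w{\alpha}$ there is a unique quasiconformal map $\w{f}:(\w{N}_g,\w{\X}_1)\to(\w{N}_g,\w{\X}_2)$ of least maximal dilatation, its associated quadratic differential having at worst simple poles at the marked points. Because $\w{\X}_1,\w{\X}_2$ are $\sigma$-anti-invariant and $\w{\alpha}$ is $\sigma$-equivariant, the conjugate $\sigma\circ\w{f}\circ\sigma$ is again an extremal map in the class $\w{\alpha}$ with the same maximal dilatation; by uniqueness it equals $\w{f}$, so $\w{f}$ is $\sigma$-equivariant and descends to a map $f:(N_g,\X_1)\to(N_g,\X_2)$ with $f\in\alpha$. (If $\alpha$ already contains a dianalytic isomorphism, take $f$ to be that isomorphism, the unique extremal map of dilatation $1$; the argument below is unchanged.)

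To obtain condition~\eqref{Con:one}, suppose $h_i\in\Aut(N_g;\X_i)\cap\diff(N_g;k)$ satisfy $h_1\circ f\circ h_2\simeq f$ relative to the marked points. Each $h_i$ is dianalytic, hence lifts to a conformal or anticonformal automorphism of $(\w{N}_g,\w{\X}_i)$, which is in either case $1$-quasiconformal. Pre- and post-composition by $1$-quasiconformal maps preserves the maximal dilatation, so $h_1\circ f\circ h_2$ has the same maximal dilatation as $f$. As it lies in the homotopy class of $f$ relative to the marked points, it is therefore also extremal in that class, and the uniqueness of the extremal map forces $h_1\circ f\circ h_2=f$.

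The main obstacle is foundational rather than formal: one must transfer the classical orientable, punctured Teichm\"uller existence-and-uniqueness theorem to the present dianalytic, marked setting. Concretely, I expect the work to lie in verifying that the lifted extremal problem on $\w{N}_g$ is genuinely $\sigma$-symmetric and marked-point compatible, and that the equivariant extremal map descends to a map on $N_g$ that is admissible for the structures $\X_1,\X_2$. Once these points are established, the uniqueness argument of the previous paragraph is purely formal.
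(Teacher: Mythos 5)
Your proposal is correct and follows essentially the same route as the paper: pass to the orientation double cover, take the Teichmüller extremal representative there (citing the orientable marked-point theory), use anti-analyticity of the deck involution plus uniqueness to make the extremal map equivariant and descend it, and then derive condition~\eqref{Con:one} by lifting the dianalytic automorphisms $h_i$ (which are $1$-quasiconformal) and invoking uniqueness of the extremal map once more.
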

\begin{proof}
    Let $\phi: \modn \to \mcg(S_{g-1};2k)$ be the homomorphism induced by the orientable double cover $\pi: S_{g-1} \to N_g$ defined by $[f] \mapsto [\w{f}]$ as in \cite{CJX24Per}*{Proposition 2.2} where $\w{f}$ is a lifting under $\pi$ that preserves the orientation of $S_{g-1}$. Consider the pullback structures $\Y_i = \pi^*(\X_i) \in \M(S_{g-1})$. By the Teichmüller Extremal Problem for orientable surfaces with marked points (see \cite{Hubb06}*{Theorem 5.3.12 and Corollary 7.2.3}), there exists a Teichmüller map $\w{f} : (S_{g-1}; \Y_1) \to (S_{g-1}; \Y_2)$ such that $\w{f} \in \phi(\alpha)$. 
    
    Since the structures $\Y_i$ are pullbacks of the dianalytic structures $\X_i$ under $\pi$, the map $\sigma: (S_{g-1}; \Y_i) \to (S_{g-1}; \Y_i)$ is anti-analytic for $i = 1, 2$. Consequently, the composition $\sigma \circ \w{f} \circ \sigma$ has the same complex dilatation as $\w{f}$. Since $\w{f} \in \phi(\alpha)$, we have $\sigma \circ \w{f} \circ \sigma \simeq f$. By the uniqueness of Teichmüller maps \cite{Hubb06}*{Theorem 5.3.12}, it follows that $\sigma \circ \w{f} \circ \sigma = \w{f}$. Then, $\w{f}$ must preserve the fibers of the orientable double cover $\pi: S_{g-1} \to N_g$.

    Define the map $f: (N_g; \X_1) \to (N_g; \X_2)$ as the projection of $\w{f}$ under $\pi: S_{g-1} \to N_g$. Since $\w{f} \in \phi(\alpha)$, it follows that $f \in \alpha$. To prove that $f: (N_g; \X_1) \to (N_g; \X_2)$ satisfies the conditions of the lemma, we consider the lifts of $h_i \in \Aut(N_g; \X_i)$ for $i = 1, 2$, each of which has complex dilatation equal to one. By applying the uniqueness of Teichmüller maps on $S_{g-1}$ and projecting to $N_g$, the result follows, completing the proof.
\end{proof}

The following result, establish that two subgroups of order $p$ that are conjugated in $\modn$, induce surface kernels topologically equivalent relative to the $k$ elliptic generators. Thus, this two notions are equivalent as we will prove in Theorem \ref{Thm:Conjugacy:Classes:Surface:Kernels}.

\begin{lem}
    Let $G_1,G_2\leqslant \modn$ be subgroups of order $p$ such that $G_1$ and $G_2$ are conjugated in $\modn$. 
    Then $\theta_{G_1}$ and $\theta_{G_2}$ are topologically equivalent relative to the $k$ marked elliptic generators.
\end{lem}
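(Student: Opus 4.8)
The plan is to run the argument of Lemma~\ref{Lem:Injectivity_of_PSI} in reverse: starting from a conjugacy $\alpha G_1 \alpha^{-1} = G_2$ in $\modn$, I will produce a geometric conjugation $\tau$ of $\hiper$ carrying $\Gamma_1$ to $\Gamma_2$ (and $K_1$ to $K_2$), and then read off the required isomorphisms $\varphi\colon\Gamma\to\Gamma$ and $\psi\colon\Zp\to\Zp$ from $\tau$. Throughout I use the notation of Construction~\ref{C:SurfaceKernel}, writing $\w{G}_\ell=\langle f_\ell\rangle\leqslant\Aut(N_g;\X_\ell)$ for the Nielsen realizations, $w_\ell\colon(N_g,\X_\ell)\to\hiper/K_\ell$ for the dianalytic homeomorphisms, $\lambda_\ell\colon\Gamma\to\Gamma_\ell$ for the isomorphisms of Lemma~\ref{Lem:Isomorphism_one_marked_point}, and $\rho_\ell\colon G_\ell\to\Zp$ for the chosen isomorphisms.

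First I would choose a rigid representative of the conjugating class. Applying Lemma~\ref{Thm:TeichExtremalProblem} to $\alpha$ and the structures $\X_1,\X_2$ yields $f\in\alpha$, $f\colon(N_g,\X_1)\to(N_g,\X_2)$, satisfying the rigidity condition~(1). Since $\pi(f_1)$ generates $G_1$ and $\alpha G_1\alpha^{-1}=G_2$, we have $f\circ f_1\circ f^{-1}\simeq f_2^{\,j}$ relative to the marked points for some $1\leqslant j<p$. Rewriting this as $f_2^{-j}\circ f\circ f_1\simeq f$ and invoking condition~(1) with $h_1=f_2^{-j}\in\Aut(N_g;\X_2)\cap\diff(N_g;k)$ and $h_2=f_1\in\Aut(N_g;\X_1)\cap\diff(N_g;k)$ forces the equality $f\circ f_1\circ f^{-1}=f_2^{\,j}$ on the nose. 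Hence $f$ conjugates $\w{G}_1$ onto $\w{G}_2$. Consequently $w_2\circ f\circ w_1^{-1}\colon\hiper/K_1\to\hiper/K_2$ intertwines the actions of $\wk{G}_1$ and $\wk{G}_2$ (using that $\varepsilon$ and the $w_\ell$ are equivariant, so $\wk{G}_\ell=w_\ell\w{G}_\ell w_\ell^{-1}$), and any lift $\tau\colon\hiper\to\hiper$ then satisfies $\tau K_1\tau^{-1}=K_2$ and, by the definition~\eqref{Eqn:NECGroup:GammaG} of $\Gamma_\ell$ as the group of lifts of $\wk{G}_\ell$, also $\tau\Gamma_1\tau^{-1}=\Gamma_2$.

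Setting $\w{\varphi}:=(\gamma\mapsto\tau\gamma\tau^{-1})\colon\Gamma_1\to\Gamma_2$, I define $\varphi:=\lambda_2^{-1}\circ\w{\varphi}\circ\lambda_1\colon\Gamma\to\Gamma$. Because $\w{\varphi}$ carries $K_1$ to $K_2$, it descends through the epimorphisms $\pi\circ\varepsilon\circ\theta_\ell\colon\Gamma_\ell\to G_\ell$ (whose kernels are exactly $K_\ell$) to the isomorphism $G_1\to G_2$ induced by conjugation by $\alpha$; transporting this along $\rho_1,\rho_2$ produces $\psi\colon\Zp\to\Zp$, and the commutativity $\theta_{G_2}\circ\varphi=\psi\circ\theta_{G_1}$ of diagram~\eqref{Eqn:Diagram_equivalence_of_SK} then follows by unwinding Construction~\ref{C:SurfaceKernel}.

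The crucial and most delicate step is verifying that $\varphi(x_i)$ lies in the conjugacy class of a non-trivial power of $x_i$ for each marked index $i=1,\ldots,k$; this is where the marked points enter. Since $f$ represents a pure mapping class it fixes each marked point, $f(z_i)=z_i$, so $w_2\circ f\circ w_1^{-1}$ sends $w_1(z_i)$ to $w_2(z_i)$. Lifting and using Lemma~\ref{Lem:Isomorphism_one_marked_point} (which places the fixed point $\zeta_i^{(\ell)}$ of the elliptic generator $\lambda_\ell(x_i)$ over $w_\ell(z_i)$), I get $q_{K_2}(\tau(\zeta_i^{(1)}))=w_2(z_i)=q_{K_2}(\zeta_i^{(2)})$, so $\tau(\zeta_i^{(1)})$ and $\zeta_i^{(2)}$ lie in the same $K_2$-orbit, hence in the same $\Gamma_2$-orbit (compare Lemma~\ref{Lem:Fixed_points_same_orbit}). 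Thus the elliptic element $\w{\varphi}(\lambda_1(x_i))=\tau\lambda_1(x_i)\tau^{-1}$, whose fixed point is $\tau(\zeta_i^{(1)})$, is $\Gamma_2$-conjugate to a power of the \emph{same} generator $\lambda_2(x_i)$; applying $\lambda_2^{-1}$ shows $\varphi(x_i)$ is conjugate to a power of $x_i$, which by Remark~\ref{Rmk:conjugated_to_x_or_x-1} is of $x_i^{\pm1}$-type and in particular non-trivial. Hence $\varphi$ and $\psi$ witness topological equivalence relative to the $k$ marked elliptic generators. I expect the main obstacle to be the second step, namely upgrading the isotopy $f\circ f_1\circ f^{-1}\simeq f_2^{\,j}$ to an on-the-nose conjugation, since it is precisely here that the Teichmüller rigidity of Lemma~\ref{Thm:TeichExtremalProblem} must be deployed correctly in order for the lift $\tau$ to exist and to respect the elliptic fixed-point data.
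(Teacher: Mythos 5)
Your proposal is correct and follows essentially the same route as the paper: choose a Teichmüller-rigid representative $f\in\alpha$ via Lemma~\ref{Thm:TeichExtremalProblem} to upgrade the isotopy $f\circ f_1\circ f^{-1}\simeq f_2^{\,j}$ to an equality, lift $w_2\circ f\circ w_1^{-1}$ to $\tau$ conjugating $\Gamma_1$ to $\Gamma_2$, and then use the fixed-point data of Lemma~\ref{Lem:Isomorphism_one_marked_point} to check that $\varphi=\lambda_2^{-1}\circ(\tau\cdot\tau^{-1})\circ\lambda_1$ sends each marked $x_i$ into the conjugacy class of a power of $x_i$. The only cosmetic difference is that the paper phrases the last step via an explicit element $u_i\in K_2$ moving $\tau(\zeta_i)$ to $\zeta_i'$ and cites Macbeath for the fact that elliptic elements of equal order with a common fixed point are powers of one another, which is the same content as your orbit argument.
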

\begin{proof}
    Let $\X_1, \X_2$ dianalytic structures of $N_g$ and $\w{G}_1, \w{G}_2 \leqslant \diffnk$ such that $\pi(\w{G}_\ell)=G_\ell$, $ \w{G}_\ell \leqslant \Aut(N_g;\X_\ell) $, for $\ell=1,2$. 
    Let $ K_\ell$ denote the surface NEC group such that $\hiper/K_\ell\cong N_g$ as Klein surfaces with structure $\X_\ell$. Additionally, denote $\wk{G}_\ell = \varepsilon(\w{G}_\ell) \leqslant \Aut(\hiper/K_\ell) $ as in Construction \ref{C:SurfaceKernel}. 

    Since \( G_1 \) and \( G_2 \) are conjugate in \( \modn \), there exists \( \alpha \in \modn \) such that \( \alpha G_1 \alpha^{-1} = G_2 \). By Lemma \ref{Thm:TeichExtremalProblem}, there exists a diffeomorphism \( f \in \diffnk \) such that \( f \in \alpha \) and \( f: (N_g; \X_1) \to (N_g; \X_2) \) satisfies condition \eqref{Con:one} described in the lemma.

    \begin{claim}[1]
        $ f \w{G}_1 f^{-1} = \w{G}_2 $.
    \end{claim}
    Let $y_1\in \w{G}_1$. Since $\alpha {G_1}\alpha^{-1}=G_2$, it follows that  there exists $y_2 \in \w{G}_2$ such that $ f\circ y_1 \circ f^{-1} \simeq y_2  $ relative to the marked points. This implies that $y_2^{-1} \circ f \circ y_1 \simeq f$. Since $y_\ell \in \Aut(N_g;\X_\ell)$ for $\ell=1,2$, it follows that $y_2^{-1} \circ f \circ y_1 = f $ by condition \eqref{Con:one} of Lemma \ref{Thm:TeichExtremalProblem}. This proves that $ f \w{G}_1 f^{-1} \subset \w{G}_2 $. The other contention it can be proved in a similar way. \\

    \n
    Now, define $\overline{f}: = w_2 \circ f \circ w_1^{-1} : \hiper / K_1 \to \hiper / K_2 $ and consider a lifting $\tau: \hiper \to \hiper$ of this homeomorphism. Since the group of covering transformations of $ \hiper / K_\ell \to \hiper / \Gamma_\ell $ is equal to $ \wk{G}_\ell \leqslant \Aut(\hiper/ K_\ell )  $ for each $\ell=1,2$ and since $ f \w{G}_1 f^{-1} = \w{G}_2 $ by Claim (1), then there exists $\widehat{f}: \hiper/\Gamma_1 \to \hiper / \Gamma_2$ such that the following diagram is commutative
    $$ 
    \cuadro{\hiper/K_1}{\overline{f}}{\hiper/K_2}{}{}{\hiper/\Gamma_1}{\widehat{f}}{\hiper/\Gamma_2.} 
    $$
    On the other hand, given $\gamma_1\in \Gamma_1$, the following diagram is commutative:
    $$
    \xymatrix @C=1cm @M=2mm{
	\hiper \ar[r]^-{\tau^{-1}}  \ar[d] & \hiper \ar[r]^-{\gamma_1} \ar[d] & \hiper \ar[r]^-{\tau}  \ar[d] & \hiper \ar[d] \\
	\hiper/ K_2 \ar[r]^-{\overline{f}^{-1}} \ar[d] & \hiper/ K_1 \ar[r]^-{\theta_1(\gamma_1)} \ar[d] & \hiper/ K_1 \ar[r]^-{\overline{f}} \ar[d] & \hiper /K_2  \ar[d] \\
	\hiper/ \Gamma_2 \ar[r]^-{\widehat{f}^{-1}} & \hiper / \Gamma_1  \ar[r]^-{id}	& \hiper / \Gamma_1  \ar[r]^-{\widehat{f}} & \hiper / \Gamma_2,   
	}
    $$
    thus $ \tau \circ \gamma_1 \circ \tau^{-1} \in \Gamma_2$, since this composition is a covering transformation to the branched covering $ \hiper \to \hiper /  \Gamma_2 $. With this information, we define the functions  
    \begin{align*}
        \w{\varphi}: & \Gamma_1 \to \Gamma_2, &  \gamma_1 \mapsto & \tau \circ \gamma_1 \circ  \tau^{-1} \text{ for all } \gamma_1\in \Gamma_1 \\
        \w{\psi}: & \wk{G}_1\to \wk{G}_2, & y_1 \mapsto & f\circ y_1 \circ f^{-1} \text{ for all } y_1 \in \wk{G}_1.
    \end{align*}
    We can see that these functions are isomorphism and the following diagram is commutative
    $$ 
    \cuadro{\Gamma_1}{\theta_1}{\wk{G}_1}{\w{\varphi}}{\w{\psi}}{\Gamma_2}{\theta_2}{\wk{G}_2} 
    $$
%
    This implies that $\theta_1$ and $\theta_2$ are topologically equivalent. Thus, to complete the proof, we need to prove that, in fact, $\w{\varphi}$ preserves the conjugacy classes up to a power of the first $k$ elliptic generators.
    Let $\zeta_i\in \hiper$ be the fixed points of the elliptic generators $\lambda_1(x_i)$ and $\zeta_i'\in \hiper$ be the fixed points of the elliptic generators $\lambda_2(x_i)$ for $i=1,\ldots, k$. Recall from Lemma \ref{Lem:Isomorphism_one_marked_point} that 
    $\zeta_i\cdot K_1=w_1(z_i)$ and $\zeta_i'\cdot K_2=w_2(z_i)$ for $i=1,\ldots, k$. Starting from this fact and since $\tau$ is a lift of $\overline{f}$, we have that
    \begin{align*}
        \zeta_i' \cdot K_2=  w_2 (z_i) =\overline{f}(w_1 (z_i) )= \overline{f}(\zeta_i\cdot K_1) = \tau(\zeta_i)\cdot K_2,  
    \end{align*}
    thus there exists $u_i\in K_2$ such that $\zeta_i'= u_i ( \tau( \zeta_i ) ) $, for each $i=1,\ldots, k$.
    On the other hand, since 
    $$
    \w{\varphi} (\lambda_1(x_i))=\tau \cdot \lambda_1(x_i) \cdot \tau^{-1}, 
    $$  
    it follows that $u_i\cdot ( \w{\varphi}(\lambda_1(x_i)) )\cdot u_i^{-1}$ has as fixed point to $ u_i(\tau(\zeta_i)) $. Then the element $u_i\cdot ( \w{\varphi}(\lambda_1(x_i)) )\cdot u_i^{-1}$ is a power  of $ \lambda_2(x_i) $ because both elements have the same fixed point and order (see \cite{Macbeath67}*{Page 1198}). Consequently, the isomorphism $ {\varphi}: \Gamma \to \Gamma $ defined by $ \lambda_2^{-1}\circ \w{\varphi} \circ \lambda_1  $ satisfies that $\varphi(x_i)$ is conjugated to a power of $x_i$ for $i=1,\ldots, k$. Then $\theta_{1}$ and $\theta_{2}$ are topologically equivalent relative to the $k$ marked elliptic generators. Therefore $\theta_{G_1}$ and $\theta_{G_2}$ are topologically equivalent relative to the $k$ marked elliptic generators.
\end{proof}


Before proving the main theorem of this subsection, we introduce the following notation. Let $\CClk$ be the group of conjugacy classes of subgroups of order $p$ in $\modn$ that act on $N_g$ with $t$ fixed points, such that the orbit space of this action is homeomorphic to $N_h$. Using this notation, Theorem \ref{Thm:Main:CClass:SK} can be restated as follows.

\begin{thm}\label{Thm:Conjugacy:Classes:Surface:Kernels}
    The function $\Psi: \CClk \to \SKtopk  $ defined by $[G]\mapsto [\theta_G]$ is a bijection.
    
\end{thm}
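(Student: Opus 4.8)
The plan is to establish that the map $\Psi\colon \CClk \to \SKtopk$, $[G]\mapsto[\theta_G]$, is well-defined, injective, and surjective, drawing on the three lemmas already proved. The entire strategy rests on recognizing that all the hard geometric content has been front-loaded into Construction \ref{C:SurfaceKernel} and the preceding lemmas, so the theorem itself is essentially a packaging argument.

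\textbf{Well-definedness.} First I would verify that $\Psi$ does not depend on the choices made in Construction \ref{C:SurfaceKernel}. A representative $G$ of a conjugacy class requires choosing a dianalytic structure $\X$ via Nielsen realization, a surface NEC group $K_G$, an isomorphism $\lambda\colon\Gamma\to\Gamma_G$ from Lemma \ref{Lem:Isomorphism_one_marked_point}, and an isomorphism $\rho\colon G\to\Zp$. I would argue that any two such sets of choices for the \emph{same} subgroup $G$ produce surface kernels that are topologically equivalent relative to the $k$ marked elliptic generators: different choices of $\rho$ differ by an automorphism $\psi$ of $\Zp$, while different choices of $\lambda$ (both satisfying the conclusion of Lemma \ref{Lem:Isomorphism_one_marked_point}) differ by an automorphism $\varphi$ of $\Gamma$ that, by that lemma together with Remark \ref{Rmk:conjugated_to_x_or_x-1}, sends $x_i$ to a conjugate of a power of $x_i$ for $i=1,\dots,k$. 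These assemble into the commutative square of Diagram \eqref{Eqn:Diagram_equivalence_of_SK}. Combined with the last lemma (which handles the passage between two \emph{different but conjugate} subgroups $G_1,G_2$), this shows $\Psi$ is well-defined on conjugacy classes.

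\textbf{Injectivity and surjectivity.} Injectivity is precisely the content of Lemma \ref{Lem:Injectivity_of_PSI}: if $\theta_{G_1}$ and $\theta_{G_2}$ are topologically equivalent relative to the $k$ marked elliptic generators, then an isomorphism $\varphi\colon\Gamma\to\Gamma$ with $\varphi(x_i)$ conjugate to a power of $x_i$ for $i\le k$ transports, via the isomorphisms $\lambda_1,\lambda_2$, to an isomorphism $\Gamma_1\to\Gamma_2$ satisfying the hypothesis of Lemma \ref{Lem:Injectivity_of_PSI}, whence $G_1$ and $G_2$ are conjugate in $\modn$. For surjectivity, I would take an arbitrary surface kernel $\theta\colon\Gamma\to\Zp$ in $\SKp$ and realize it geometrically: since $\ker(\theta)$ is a surface NEC group isomorphic to $\pi_1(N_g)$, the quotient $\hiper/\ker(\theta)$ is a Klein surface homeomorphic to $N_g$, and $\Zp\cong\Gamma/\ker(\theta)$ acts on it by automorphisms, yielding by Singerman's theorem (as in Proposition \ref{Prop:RH and ptorsion}) a subgroup $G\leqslant\modng$; placing the marked points at the cone points of the first $k$ elliptic generators produces $G\leqslant\modn$ acting with $t$ fixed points and quotient $N_h$, so $[G]\in\CClk$. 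I would then check that $\Psi([G])=[\theta]$ by comparing the reconstructed surface kernel $\theta_G$ with $\theta$ up to the relevant equivalence.

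\textbf{Main obstacle.} The hard part is the well-definedness step, specifically showing independence of the auxiliary choices of $K_G$ and $\lambda$, because this is where one must interlock the abstract congruence relation on $\Gamma$ with the genuinely topological fact that different dianalytic structures yield conjugate (not merely isomorphic) data. The key technical lever here is Lemma \ref{Thm:TeichExtremalProblem}, whose uniqueness-of-Teichm\"uller-map content rigidifies the comparison diffeomorphism $f$ so that the induced isomorphisms $\w\varphi,\w\psi$ are canonical enough to fit into the equivalence square; I would lean on the final lemma to absorb most of this and present the well-definedness argument as an application of it together with the automorphism bookkeeping described above.
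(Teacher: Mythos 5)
Your proposal is correct and follows essentially the same route as the paper: injectivity is delegated to Lemma \ref{Lem:Injectivity_of_PSI}, well-definedness to the preceding lemma on conjugate subgroups (together with the choice-independence bookkeeping), and surjectivity is obtained by geometrically realizing a given surface kernel as a group of automorphisms of $\hiper/\ker(\theta)$ and placing the marked points at the images of the fixed points of the first $k$ elliptic generators. The only detail the paper makes explicit that you gloss over is the verification that the generator of the covering group actually fixes each marked class $\zeta_i\cdot K$ (using that $p$ is an odd prime), but this is a minor point within the same argument.
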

\begin{proof} 
The injectivity of $\Psi$ is consequence of Lemma \ref{Lem:Injectivity_of_PSI}. It remains to prove the surjectivity. Let $\theta:\Gamma \to \Zp$ be a surface kernel such that $[\theta]\in \SKtopk$. Consider a NEC group $\Gamma'$ isomorphic to $\Gamma$ and a fixed isomorphism $\lambda: \Gamma \to \Gamma'$, then $ \theta'=\theta\circ \lambda^{-1}: \Gamma' \to \Zp  $ is also a surface kernel which is equivalent to $\theta$. Let be $K:=\ker(\theta')\lhd \Gamma'$ which is a surface NEC group. Since 
$ 
\Gamma'/K \cong \Zp 
$ 
then by \cite{Bujalance10}*{Theorem 1.4.4} the group $\Zp$ is a subgroup of $\Aut(\hiper/K)$. More precisely, $\Zp\cong \cov(q)$ where $\cov(q)$ are the covering transformations of the branched covering $\hiper/K\to \hiper/\Gamma'$. In particular, the surface kernel $\theta': \Gamma'\to \cov(q)$ can be thought as follows. For each $\gamma\in \Gamma'$, $\theta'(\gamma):\hiper/K\to  \hiper/K$ is the covering transformation defined by $\theta'(\gamma)(\zeta\cdot K)=\gamma(\zeta)\cdot K$ for all $\zeta\in \hiper$.

Define $\wk{G}:=\cov(q)\leqslant \Aut(\hiper/K)$, and note that $\wk{G}=\langle f \rangle$ for some $f\in \cov(q)$. It remains to prove that $f$ fix the marked points. For each $i\in \{1,\ldots, k\}$ let $\zeta_i\in \hiper$ be the fixed point of $\lambda(x_i)$. Then $ \theta'(\lambda(x_i))(\zeta_i\cdot K) = \zeta_i\cdot K $ and also $\theta'(\lambda(x_i))\neq id_{\hiper/K}$, since $\lambda(x_i)$ has finite order. Since $\theta'(\lambda(x_i))$ is a power of $f$ and $p$ is an odd prime, it follows that $f$ preserves the classes $\zeta_i\cdot K$ for all $i=1,\ldots, k,$. 

Now, consider a dianalytic homeomorphism $w: N_g\to \hiper/K$ such that $w(z_i)=\zeta_i\cdot K$. Hence $\w{G}=\langle w^{-1}\circ f \circ w \rangle \leqslant \diffnk $. By the arguments in the previous paragraphs, the group $G=\pi(\w{G})$ satisfies that the surface kernel $\theta_G$ is topologically equivalent relative to the marked point to $\theta:\Gamma \to \Zp$.
\end{proof}

\subsection{Surface kernels and t-tuples}
In this section, we establish a connection between the set  of the surface kernels $\SKtopk$ and the set $\Ttuples$ of equivalence classes of $t$-tuples of $(\Zp)^t$ for a solution $(h,t)$ of the Riemann-Hurwitz equation \eqref{Eqn:Riemann-Hurwitz}. This approach reduces the problem of finding conjugation classes of subgroups of order $p$ to a purely combinatorial problem. More precisely, the problem is reduced to finding all the possible equivalence classes of $t$-tuples that are admissible for the solution $(h,t)$. 

We begin by introducing a series of automorphisms of the group $\Gamma$ and results related to this automorphism. Consider the automorphisms of $\Gamma$ described in \cite{Bujalance15}, as presented in the table below (at the generator level). By convention, generators that remain fixed are omitted.

\begin{center}
        \begin{tabular}{|c r l l|}
\hline 
$\gamma:$ & $\Gamma$ & $\to$ & $\Gamma$   \\ 
\hline 
 
 & $d_1$ & $\mapsto$ & $x_t\cdot d_1$   \\ 
 & $x_t$ & $\mapsto$ & $(x_t \cdot d_1)\cdot {x_t}^{-1} \cdot (x_t \cdot d_1)^{-1}$  \\ 

\hline 
$\rho_i:$ & $\Gamma$ & $\to$ & $\Gamma$   \\ 
\hline 
 
 & $x_i$ & $\mapsto$ & $x_i \cdot x_{i+1} \cdot  {x_i}^{-1}$  \\ 
 & $x_{i+1}$ & $\mapsto$ & $x_i$   \\ 

\hline 
$\alpha_j:$ & $\Gamma$ & $\to$ & $\Gamma$ \\
\hline 
 & $d_j$ & $\mapsto$ & ${d_j}^2 \cdot d_{j+1} \cdot  {d_j}^{-2}$  \\ 
 & $d_{j+1}$ & $\mapsto$ & $d_j$   \\ 

\hline 
    \end{tabular} 
        \end{center}
Where $i=1,\ldots t-1$ and $j=1,\ldots, h-1$. 

For the remainder of the section and in order to simplify the exposition we add the following convention. When we refer to applying a sequence of automorphisms $\varphi_n, \dots, \varphi_1$, where each $\varphi_\ell$ represents an isomorphism as outlined in the table, this means that we first apply $\varphi_1$, transforming the generators into $\varphi_1(x_i)$ and $\varphi_1(d_j)$. We then apply $\varphi_2$ to these newly defined generators, in which $\varphi_1(x_i)$ and $\varphi_1(d_j)$ take the roles of the $x_i$'s and $d_j$'s, as indicated in the table. This process continues until the final automorphism $\varphi_n$. To clarify this convention, we present the following example.

\begin{example}
We construct the automorphism $\varphi$ using $ \gamma \alpha_1$:
\begin{align*}
        & \alpha_1 &  & \gamma   &  \\
    x_t & \mapsto x_t & ={x_t}' & \mapsto  ({x_t}'{d_1}')\cdot ({x_t}')^{-1} \cdot ({x_t}'{d_1}')^{-1} & ={x_t}'' \\
    d_1 & \mapsto {d_1}^2\cdot d_2 \cdot {d_1}^{-2} & ={d_1}' & \mapsto  {x_t}'{d_1}' & ={d_1}'' \\
    d_2 & \mapsto d_1 & {=d_2}' & \mapsto {d_2}' & ={d_2}''
\end{align*}
Finally, we can express the last generators in terms of the original as we expect and this directly gives us the isomorphism $\varphi$ using ${\gamma \alpha_1}$:
\begin{align*}
    \varphi(x_t) & ={x_t}''=  (x_t \cdot {d_1}^2\cdot d_2 \cdot {d_1}^{-2} )\cdot {x_t}^{-1} \cdot (x_t \cdot {d_1}^2\cdot d_2 \cdot {d_1}^{-2} )^{-1}\\
    \varphi(d_1) & ={d_1}''=  x_t \cdot {d_1}^2\cdot d_2 \cdot {d_1}^{-2} \\
    \varphi(d_2) & ={d_2}''=  d_1\\
\end{align*}
\end{example}
As showed in the previous example, it is difficult to precisely describe the isomorphism $\gamma \alpha_1$ using the canonical generators $x_1, \ldots, x_t, d_1 \ldots  d_h$; doing so would require significant time and text. Because of this, it is important to keep in mind that when using the automorphisms described in the table, it means a sequence of generator changes was made and that the isomorphism is not exactly the result of the composition. The important point to keep in mind is the effect that these isomorphisms have on the $k$ marked elliptic generators. When we make the pertinent change of generator we must take into consideration the effect of the isomorphism has into the conjugacy class of this $k$-elliptic generators. At the end, the canonical elliptic generators form a complete elliptic system in the sense of \cite{Bujalance90}*{Definition 2 of Section 2.2}.


\begin{lem} \label{Lem:Change_of_epimorphism_dj_equal_0}
Let $\theta: \Gamma \to \Zp$ be a surface kernel. Then, there exists an isomorphism $\varphi: \Gamma \to \Gamma$ such that
\begin{align*}
    \varphi(x_i) &= x_i & \text{for } & i=1,\ldots, t-1, \\
    \theta(\varphi(d_j)) &= 0 & \text{for } & j=2,\ldots, h, \\
    2 \cdot \theta(\varphi(d_1)) &= - \sum_{i=1}^{t} \theta(x_i),
\end{align*}
and $\varphi(x_t)$ is in the same conjugacy class as $x_t$. In other words, $\theta$ is topologically equivalent relative to the $k$ marked elliptic generators to $\theta' = \theta \circ \varphi$ and satisfies $\theta'(x_i) = \theta(x_i)$ for $i=1,\ldots, t$, $\theta'(d_j) = 0$ for $j=1,\ldots, h$, and $2 \cdot \theta'(d_1) = -\sum_{i=1}^{t} \theta(x_i). $
\end{lem}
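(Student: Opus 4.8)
The plan is to work entirely at the level of the values of $\theta$ on the generators, writing $\Zp$ additively and using that $\Zp$ is abelian, so $\theta$ is constant on conjugacy classes. First I would record two reductions. Since $\theta$ is a surface kernel, $\ker\theta$ is torsion free, so no elliptic generator can lie in $\ker\theta$; hence $\theta(x_i)\neq 0$ for every $i$, and in particular $c:=\theta(x_t)$ is a generator of $\Zp$. Moreover, the long relation in the presentation \eqref{Eqn:Canonical:Pres:Gammap} gives $\sum_{i=1}^{t}\theta(x_i)+2\sum_{j=1}^{h}\theta(d_j)=0$, so once I arrange $\theta'(d_j)=0$ for $j\geq 2$ with $\theta'(x_i)=\theta(x_i)$ for all $i$, the identity $2\theta'(d_1)=-\sum_i\theta(x_i)$ is automatic. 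Thus the whole content is to kill $\theta$ on $d_2,\dots,d_h$ by an automorphism that fixes $x_1,\dots,x_{t-1}$ and preserves the conjugacy class of $x_t$.

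Next I would compute how the tabulated automorphisms act on the $\theta$-values. The automorphism $\alpha_j$ fixes every $x_i$ and, at the level of $\theta$, merely transposes $\theta(d_j)$ and $\theta(d_{j+1})$; since adjacent transpositions generate $S_h$, the $\alpha_j$ realize an arbitrary permutation of $(\theta(d_1),\dots,\theta(d_h))$ while leaving all $\theta(x_i)$ untouched. The automorphism $\gamma$ also fixes $x_1,\dots,x_{t-1}$ and acts on $\theta$-values by $\theta(d_1)\mapsto\theta(d_1)+c$ and $\theta(x_t)\mapsto -\theta(x_t)$; geometrically it sends $x_t$ into the conjugacy class of $x_t^{-1}$. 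Conjugating $\gamma$ by the permutation swapping the first and the $j$-th crosscap yields, for each $j$, a move $g_j$ adding $c$ to $\theta(d_j)$ and flipping $\theta(x_t)$, fixing the remaining $\theta(d)$'s.

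The key construction is to combine these in sign-compensating pairs. Performing $g_j$ followed by $g_k$ with $j\neq k$ changes $\theta(d_j)$ by $+c$ and $\theta(d_k)$ by $-c$ — the intervening sign flip turns the second $+c$ into $-c$ — and restores $\theta(x_t)=c$, hence restores the conjugacy class of $x_t$; call this transfer $M_{j,k}$. Because $c$ generates $\Zp$, each $\theta(d_j)$ is an integer multiple $m\,c$, so I can clear $\theta(d_2),\dots,\theta(d_h)$ one at a time: applying $M_{1,j}$ exactly $m$ times sends $\theta(d_j)$ to $0$ while altering only $\theta(d_1)$. Treating $j=2,\dots,h$ in turn makes all $\theta(d_j)$ with $j\geq 2$ vanish, and since $\sum_j\theta(d_j)$ is preserved by every $M_{1,j}$, the remainder lands on $\theta(d_1)$, whence the relation forces $2\theta'(d_1)=-\sum_i\theta(x_i)$. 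I have used only swaps and $M$-moves, each of the latter containing exactly two copies of $\gamma$, so $\gamma$ is applied an even number of times overall and $\varphi(x_t)$ is conjugate to $x_t$ rather than $x_t^{-1}$; the marked generators $x_1,\dots,x_k$ are fixed outright (or, if $k=t$, $x_t$ is preserved up to the required power), matching Definition \ref{Def:SurfaceKernel:Marked} and Remark \ref{Rmk:conjugated_to_x_or_x-1}.

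The main obstacle I anticipate is precisely the bookkeeping forced by the sign flip $\theta(x_t)\mapsto-\theta(x_t)$: no single $\gamma$ can be used freely, and one must pair applications both to realize arbitrary transfers of $c$ between crosscaps and to return $x_t$ to its own conjugacy class. A secondary subtlety is the paper's convention that a ``sequence of automorphisms'' means an iterated change of generators rather than a literal composition; here this causes no difficulty, since every move either fixes $x_1,\dots,x_{t-1}$ outright or modifies $\theta(d_j)$ only through conjugation, and $\theta$ is conjugation invariant, so the value computations above are insensitive to which representatives of the conjugacy classes the generators happen to acquire.
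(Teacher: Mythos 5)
Your proof is correct and takes essentially the same route as the paper: the paper's key word $(\gamma\,\alpha_1\,\gamma\,\alpha_1)^b$ is exactly your sign-compensating pair $M$ (it transfers $b\cdot\theta(x_t)$ between $\theta(d_1)$ and $\theta(d_2)$ while restoring the conjugacy class of $x_t$), and the $\alpha_j$'s realize the permutations in both arguments. The only difference is bookkeeping --- you transfer each $\theta(d_j)$, $j\geq 2$, into $d_1$ directly, while the paper repeatedly clears $\theta(d_1)$ into $\theta(d_2)$ and reorders --- which is immaterial.
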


\begin{proof}
This proof is similar to \cite{Bujalance15}*{Theorem 1} but adapted for our purposes. Using isomorphisms of the type $\alpha_j$, we obtain new generators $x_i' = x_i$ for $i=1,\ldots, t$ and $d_j'$ such that $\theta(d_j') = 0$ for $j \geq m$ and $\theta(d_j') \neq 0$ for $j < m$. Assume $m > 2$. Since $\theta(x_r') \neq 0$, there exists $b$ such that $\theta(d_1') + b \cdot \theta(x_r') = 0$. Thus, using the automorphism $(\gamma \alpha_1 \gamma \alpha_1)^b$, we obtain new generators $x_i'' = x_i$ for $i=1,\ldots, t-1$, with $x_t''$ in the same conjugacy class as $x_t$, and $\theta(d_j'') = 0$ for $j \geq m$ and $\theta(d_1'') = 0$. Reordering the generators $d_j''$, using the isomorphism $\alpha_j$, we can assume that $\theta(d_j'') = 1$ for $j \geq m-1$. Repeating this process, we have the final basis where $x_i''' = x_i$ for $i=1,\ldots, t-1$, with $x_t'''$ in the same conjugacy class as $x_t$, $\theta(d_j''') = 0$ for $j > 1$, and $\theta(d_1''')$ is given by
\[
2 \cdot \theta(d_1''') = - \sum_{i=1}^{t} \theta(x_i).
\]
Expressing the basis $x_i''', d_j'''$ in terms of the original basis, we obtain the desired isomorphism $\varphi: \Gamma \to \Gamma$.
\end{proof}

The previous result, shows that for each $[\theta]\in \SKtopk$ there exists a representative $\theta:\Gamma\to \Zp$ such that $\theta(d_j)=0$ for all $j=2,\ldots, h .$ The following result shows that the sign of the image of a surface kernel $\theta:\Gamma\to \Zp$ of the elliptic generators $x_i$ can be changing to our convenience without affect the topologically equivalent class. That is, for a given collection of change of signs $(\varepsilon_1, \ldots, \varepsilon_t )\in \{-1,1 \}^{t}$ there exists $\theta':\Gamma\to \Zp$ topologically equivalent to $\theta$ relative to the $k$ marked elliptic generators such that $\theta'(x_i)=\varepsilon_i \cdot \theta(x_i)$.

\begin{lem}\label{Lem:change_of_sign_xi}
    Let $\theta: \Gamma \to \Zp$ be a surface kernel such that $\theta\in \SKp$ and $\varepsilon_i\in \{-1,1\} $ for $i=1,\ldots, t$. Then there exists an isomorphism $\varphi:\Gamma \to \Gamma$ such that 
    \begin{align*}
        \theta(\varphi(x_i)) &  =\varepsilon_i\cdot \theta(x_i)  & \text{for } & i=1, 2,\ldots, t\\
        2\cdot \theta(\varphi(d_1) )&   = -2\cdot \sum_{j=2}^{h} \theta(d_j)  - \sum_{i=1}^{t} \varepsilon_i \cdot \theta(x_i)  \\
        \varphi(d_j) &          =d_j & \text{ for } & j=2, 3,\ldots, h   
    \end{align*}
    and  if $\varepsilon_i=1$, then $\varphi(x_i)$ belongs to the conjugacy class of $ x_i $, otherwise $\varphi(x_i)$ belongs to the conjugacy class of $x_i^{-1}$.
\end{lem}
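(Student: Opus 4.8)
The plan is to reduce the statement to a purely group-theoretic construction: it suffices to produce an automorphism $\varphi:\Gamma\to\Gamma$ with $\varphi(d_j)=d_j$ for $j=2,\ldots,h$ and with $\varphi(x_i)$ lying in the conjugacy class of $x_i$ when $\varepsilon_i=1$ and of $x_i^{-1}$ when $\varepsilon_i=-1$. Since $\Zp$ is abelian, $\theta$ is constant on conjugacy classes, so this conjugacy condition immediately yields $\theta(\varphi(x_i))=\varepsilon_i\cdot\theta(x_i)$. Moreover, as $\varphi$ is an automorphism, the image of the long relation $x_1\cdots x_t\,d_1^2\cdots d_h^2=1$ is again trivial; applying $\theta$ to it gives
\[
\sum_{i=1}^{t}\varepsilon_i\cdot\theta(x_i)+2\,\theta(\varphi(d_1))+2\sum_{j=2}^{h}\theta(d_j)=0,
\]
which is exactly the asserted identity for $2\cdot\theta(\varphi(d_1))$. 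Thus the whole lemma follows once such a $\varphi$ is built, and the formula for $\theta(\varphi(d_1))$ requires no separate verification.

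The building block is a single sign flip. First I would observe that the automorphism $\gamma$ of the table inverts the last elliptic generator up to conjugacy: a direct computation gives $\gamma(x_t)=(x_td_1)x_t^{-1}(x_td_1)^{-1}$, which is conjugate to $x_t^{-1}$, while $\gamma(x_i)=x_i$ for $i<t$, $\gamma(d_j)=d_j$ for $j\geq 2$, and $\gamma(d_1)=x_td_1$; one checks directly that $\gamma$ preserves the long relation, hence is an automorphism. To flip the sign at an arbitrary index $i$, I would use the braid automorphisms $\rho_i,\rho_{i+1},\ldots,\rho_{t-1}$, in this order and in the sequential-generator convention fixed above, to carry the content of slot $i$ into slot $t$, then apply $\gamma$ to invert it, and then apply the inverse braid moves to return everything to its place. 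Because each $\rho_j$ fixes all of $d_1,\ldots,d_h$ and only permutes and conjugates the elliptic generators, the resulting automorphism $\varphi_i$ fixes $d_2,\ldots,d_h$ exactly, carries $x_i$ into the conjugacy class of $x_i^{-1}$, and returns each $x_j$ with $j\neq i$ to its own conjugacy class.

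Finally I would assemble $\varphi$ as the composition of the $\varphi_i$ over those indices with $\varepsilon_i=-1$, and argue by induction on the number of flips. The key point is that automorphisms carry conjugates to conjugates: if at some stage a generator lies in the class of $x_{i'}^{\pm1}$, then performing a further flip $\varphi_{i''}$ with $i''\neq i'$ leaves it in that same class (since $\varphi_{i''}$ fixes $x_{i'}$ up to conjugacy), while a flip at $i'$ itself sends the class of $x_{i'}$ to that of $x_{i'}^{-1}$, as recorded in Remark \ref{Rmk:conjugated_to_x_or_x-1}. Since every $\varphi_i$ fixes $d_2,\ldots,d_h$, so does the composite $\varphi$, and the displayed formula for $2\cdot\theta(\varphi(d_1))$ then drops out of the long relation as in the first paragraph.

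The main obstacle is the bookkeeping in the single-flip step. One must verify not only that bracketing $\gamma$ between the braid word and its inverse yields a genuine automorphism of $\Gamma$, but also, more delicately, that after undoing the braid moves each unaffected elliptic generator returns precisely to its own conjugacy class rather than merely to that of some $x_j^{\pm1}$. This is controlled by the combinatorial fact that the braid word and its inverse implement mutually inverse permutations of the slots independently of the (conjugated) contents, so that conjugacy classes are tracked correctly throughout; the remaining combinations then follow formally.
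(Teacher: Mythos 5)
Your proposal is correct, and its overall skeleton matches the paper's: build, for each index $i$, an automorphism that inverts $x_i$ up to conjugacy while fixing $d_2,\ldots,d_h$ and leaving every other elliptic generator in its own conjugacy class, then compose over the indices with $\varepsilon_i=-1$. The difference lies in how the building block is obtained. The paper writes the flip at position $i$ down explicitly in closed form (sending $x_i\mapsto (x_i d_1)x_i^{-1}(x_i d_1)^{-1}$, $d_1\mapsto x_i d_1$, and conjugating the later elliptic generators by $x_i d_1 x_i d_1^{-1}$ so that the long relation is preserved), and verifies directly that this is an automorphism; you instead conjugate the table's automorphism $\gamma$ (the flip at the last slot) by the braid moves $\rho_i,\ldots,\rho_{t-1}$ and their inverses. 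Both work; your route reuses the generators of $\Aut(\Gamma)$ already introduced and avoids guessing the correct conjugators, at the cost of the slot-tracking bookkeeping you flag, while the paper's explicit formula makes the verification a single computation with the defining relation. Your additional observation that the identity for $2\cdot\theta(\varphi(d_1))$ is automatic from applying $\theta$ to the image of the long relation is a genuine simplification: the paper does not isolate this point, and it removes any need to compute $\varphi(d_1)$ beyond knowing $\varphi(d_j)=d_j$ for $j\geqslant 2$. Incidentally, your bookkeeping (flip exactly at the indices with $\varepsilon_i=-1$) is the right one; the paper's exponent $a_i=(\varepsilon_i+1)/2$ appears to have the sign reversed.
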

\begin{proof}
    We define the functions $\varphi_i: \Gamma \to \Gamma$ for each $i=1,\ldots, t$ at the level of generators as follows:
\begin{align*}
    x_l & \mapsto x_l & \text{for } & l=1,\ldots i-1 \\
    x_i & \mapsto (x_i \cdot d_1)\cdot {x_i}^{-1} \cdot (x_i \cdot d_1)^{-1} & &  \\
    x_l & \mapsto (x_i \cdot d_1 \cdot x_i \cdot {d_1}^{-1}) \cdot x_l \cdot (x_i \cdot d_1 \cdot x_i \cdot {d_1}^{-1})^{-1} & \text{for } & l=i+1,\ldots t \\  
    d_1 & \mapsto x_i\cdot d_1 & & \\
    d_j & \mapsto d_j & \text{for } & j=2,\ldots, h. 
\end{align*}
Since the relation of the group $\Gamma$ is preserved by $\varphi_i$, it follows that $\varphi_i$ defines a group homomorphism from $\Gamma$ to $\Gamma$. Moreover, it can be checked that  $\varphi_i$ is  an isomorphism. Define $a_i:=(\varepsilon_i+1)/2$ for $i=1,\ldots, t$. Using the isomorphisms $\varphi_1^{a_1} \varphi_{2}^{a_2} \ldots \varphi_{t-1}^{a_{t-1}}\varphi_t^{a_t}$ we obtain the desired isomorphism $\varphi$.
\end{proof}

Now, consider the set of $t$-tuples $(\beta_1, \ldots , \beta_t )$ such that $\beta_i\in (\Zp)^{\times}=\Zp\setminus \{ 0 \}$. We introduce the notion of congruence in the set of $t$-tuples.

\begin{defi}
    Let $(h,t)$ be solution to the Riemann-Hurwitz equation \ref{Eqn:Riemann-Hurwitz} and $(\beta_1, \ldots , \beta_t)$ and $(\beta_1', \ldots , \beta_t')$ be two $t$-tuples. We say that $(\beta_1, \ldots, \beta_k \mid \beta_{k+1} , \ldots, \beta_t)$ and $(\beta_1', \ldots , \beta_k' \mid \beta_{k+1}',\ldots \beta_t')$ are \textit{congruent} if there exist a permutation $\sigma:\{k+1,\ldots ,t\}\to \{k+1, \ldots , t\} $, an integer $a\not\equiv 0 \mod{p}$ and $\varepsilon_i\in \{ -1,1 \}$ for $i=1,\ldots, t$ such that 
    $$
        \beta_i = \begin{cases}
            a\cdot \varepsilon_i\cdot \beta_i' & \text{if } i=1,\ldots , k \\
            a\cdot \varepsilon_i\cdot \beta_{\sigma(i)}' & \text{if } i=k+1,\ldots , t.
        \end{cases}
    $$
\end{defi}

We can see that the notion of congruence in the set of $t$-tuples is an equivalence relation and we write $\cong$ for this relation. We denote the set of equivalence class of $t$-tuples as  
$$ \Ttuplesk:= (\Zp\setminus \{ [0] \})^t / \cong $$ 
%

\begin{thm}\label{Thm:FixedPointData}
    There is a one-to-one correspondence $\Phi: \SKtopk \to \Ttuplesk $ defined by $[\theta]\mapsto [(\theta(x_1),\ldots,\theta(x_k) \mid \theta(x_{k+1}),\ldots , \theta(x_t) )]$
\end{thm}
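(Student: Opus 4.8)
The plan is to show that $\Phi$ is well defined, surjective, and injective, using the two structural lemmas (Lemma \ref{Lem:Change_of_epimorphism_dj_equal_0} and Lemma \ref{Lem:change_of_sign_xi}) together with the automorphisms $\rho_i$ from the table. First I would check that $\Phi$ lands in $\Ttuplesk$ at all: since $\theta \in \SKp$ is a \emph{surface} kernel, the proper periods of $\ker(\theta)$ are all trivial, which forces $\theta(x_i) \neq 0$ for every $i$ (an $x_i$ with $\theta(x_i)=0$ would survive as an elliptic generator in the kernel). Hence the tuple $(\theta(x_1),\ldots,\theta(x_t))$ genuinely lies in $(\Zp \setminus \{[0]\})^t$, so the target set makes sense.

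Next I would verify well-definedness, i.e.\ that topologically equivalent surface kernels relative to the $k$ marked generators produce congruent tuples. Suppose $\theta_1$ and $\theta_2$ are related by isomorphisms $\varphi:\Gamma\to\Gamma$ and $\psi:\Zp\to\Zp$ with $\theta_2\circ\varphi = \psi\circ\theta_1$. The map $\psi$ is multiplication by some $a\not\equiv 0 \bmod p$, accounting for the scalar $a$ in the congruence relation. By Remark \ref{Rmk:conjugated_to_x_or_x-1}, $\varphi(x_i)$ is conjugate to $x_i^{\pm 1}$ for the $k$ marked generators (giving the signs $\varepsilon_i$, $i\leq k$, with no permutation allowed there), while by \cite{Bujalance15}*{Theorem 1} each unmarked $\varphi(x_i)$ is conjugate to some $x_{\sigma(i)}^{\pm1}$, yielding a permutation $\sigma$ of $\{k+1,\ldots,t\}$ and signs $\varepsilon_i$. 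Since $\theta$ is constant on conjugacy classes and $\theta(x_j^{-1}) = -\theta(x_j)$, comparing $\theta_2(\varphi(x_i))=\psi(\theta_1(x_i))=a\cdot\theta_1(x_i)$ gives exactly $\theta_1(x_i)=a\varepsilon_i\,\theta_2(x_{\sigma(i)})$ (suitably interpreted for $i\leq k$), which is precisely the definition of congruence.

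For surjectivity, given any tuple $(\beta_1,\ldots,\beta_t)$ with each $\beta_i\neq 0$ and $\sum \beta_i$ having the correct parity/divisibility to admit a $d_1$-term, I would define $\theta$ on generators by $\theta(x_i)=\beta_i$, $\theta(d_j)=0$ for $j\geq 2$, and $\theta(d_1)$ solving $2\theta(d_1)=-\sum\beta_i$ (solvable since $p$ is odd, so $2$ is invertible in $\Zp$). One checks this respects the defining relation of $\Gamma$ and is surjective because some $\beta_i\neq 0$; that $\ker(\theta)$ is a surface group follows as in Proposition \ref{Prop:RH and ptorsion} from the images of the elliptic generators having order $p$. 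Thus $[\theta]$ maps to the prescribed class.

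The injectivity is where the real content lies, and I expect it to be the main obstacle. Given $\theta_1,\theta_2\in\SKp$ with congruent tuples, I must manufacture the pair $(\varphi,\psi)$ realizing a relative topological equivalence, using \emph{only} isomorphisms of $\Gamma$ that keep the marked generators $x_1,\ldots,x_k$ inside their own conjugacy classes (up to power). Using Lemma \ref{Lem:Change_of_epimorphism_dj_equal_0} I first normalize both $\theta_\ell$ so that $\theta_\ell(d_j)=0$ for $j\geq2$, without disturbing the marked generators. The scalar $a$ is absorbed by choosing $\psi$ to be multiplication by $a$; the signs $\varepsilon_i$ are realized by Lemma \ref{Lem:change_of_sign_xi}, which again fixes the conjugacy classes of the marked generators appropriately. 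The permutation $\sigma$ on the unmarked generators is the delicate part: it must be realized by a product of the transposition-type automorphisms $\rho_i$ (for $i\geq k+1$), and I must check that such a product never drags a marked generator out of its conjugacy class. Since $\rho_i$ only involves $x_i,x_{i+1}$, restricting to $i\geq k+1$ leaves $x_1,\ldots,x_k$ literally fixed, so composing these permutation, sign, and normalization isomorphisms produces the required $\varphi$, and chasing the resulting diagram \eqref{Eqn:Diagram_equivalence_of_SK} confirms $\theta_2\circ\varphi=\psi\circ\theta_1$. The care needed is purely bookkeeping about conjugacy classes of the first $k$ generators throughout these composed automorphisms.
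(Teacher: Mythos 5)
Your proposal is correct and follows essentially the same route as the paper: well-definedness via Remark \ref{Rmk:conjugated_to_x_or_x-1} and the observation that $\psi$ is multiplication by a unit, injectivity by normalizing with Lemma \ref{Lem:Change_of_epimorphism_dj_equal_0}, realizing the permutation with the $\rho_i$ and the signs with Lemma \ref{Lem:change_of_sign_xi}, and surjectivity by an explicit epimorphism with $\theta(d_j)=0$ for $j\geqslant 2$. The only (harmless) deviations are that you set $\theta(x_1)=\beta_1$ directly and invert $2$ in $\Zp$ where the paper normalizes the first coordinate to $\pm 1$ via $\epsilon(\mu)$, and you make explicit the point, left implicit in the paper, that $\theta(x_i)\neq 0$ so the tuple lands in $(\Zp\setminus\{[0]\})^t$.
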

\begin{proof} 
    The function is well defined. Let $\theta_1$ and $\theta_2$ be two surface kernels such that $\theta_1$ and $\theta_2$ are topologically equivalent relative to the $k$ marked elliptic generators. Then there exist $\varphi: \Gamma\to \Gamma$ and $\psi:\Zp\to \Zp$ such that the Diagram \ref{Eqn:Diagram_equivalence_of_SK} is commutative. By Remark \ref{Rmk:conjugated_to_x_or_x-1}, we have that $\varphi(x_i)$ is conjugated to $x_i$ or $x_i^{-1}$ for each $i=1,\ldots, k$, and also there exists a permutation $\sigma:\{k+1,\ldots, t\}\to \{k+1,\ldots, t\}$ such that $ \varphi(x_i)$ is conjugated to $ x_{\sigma(i)}$ or $ x_{\sigma(i)}^{-1}$ for $i=k+1,\ldots, t$. Define 
    \begin{align*}
        \varepsilon_i=\begin{cases}
            1 & \text{if } x_i \text{ is conjugated to } x_i \\
            -1 & \text{if } x_i \text{ is conjugated to } x_i^{-1}
        \end{cases}
    \end{align*}
    for each $i=1,\ldots, k$. In a similar way and using the permutation $\sigma$, we define $\varepsilon_i$ for each $i=k+1,\ldots, t$. Notice that the isomorphism $\psi:\Zp\to \Zp$ is a multiplication for some $a\in \{ 1,\ldots ,p-1\}$. Hence, 
    \[
        \theta_2(\varphi(x_i))=\psi(\theta_1(x_i))=a\cdot \theta_1(x_i),
    \] 
    but $\theta_2(\varphi(x_i))=\varepsilon_i\cdot \theta_2(x_i)$ for $i=1,\ldots, k$, and $\theta_2(\varphi(x_i))=\varepsilon_i \cdot \theta_2(x_{\sigma(i)})$ for $i=k+1,\ldots, t.$ Then, the $t$-tuples $(\theta_1(x_1),\ldots, \theta_1(x_k)\mid \ldots, \theta_1(x_t)))$ and $(\theta_2(x_1),\ldots, \theta_2(x_k)\mid \ldots, \theta_2(x_t)))$ are congruent, and thus $\Phi$ is well defined.

    \n
    Now, we prove that $\Phi$ is injective. Let $\theta_1:\Gamma\to \Zp$ and $\theta_2: \Gamma \to \Zp$ be two surface kernels such that 
    \begin{align*}
        (\theta_1 (x_1), \ldots , \theta_1(x_k) & \mid \theta_1 (x_{k+1}), \ldots,\theta_1(x_t) ) \cong \\
        (\theta_2 &  (x_1),  \ldots , \theta_2(x_{k})\mid \theta_2 (x_{k+1}), \ldots,\theta_2(x_t) ).
    \end{align*}
    By definition of congruence there exists a permutation $ \sigma: \{ k+1,\ldots , t\} \to \{ k+1,\ldots , t\} $, an integer $a\neq 0$, and $\varepsilon_i \in \{-1,1 \}$ for $i=1,\ldots, t$ such that  
    $$
    \theta_1(x_i)= \begin{cases}
        a\cdot \varepsilon_i \cdot \theta_2 (x_i) & \text{if } 1\leqslant i \leqslant k, \\
        a\cdot \varepsilon_i \cdot \theta_2 (x_{\sigma({i})}) & \text{if } k+1\leqslant i \leqslant t,
    \end{cases}
    $$
    By Lemma \ref{Lem:Change_of_epimorphism_dj_equal_0}, for each $\ell=1,2$ there exist a epimorphism $\theta_\ell':\Gamma\to \Zp$ topologically equivalent relative to the $k$ marked elliptic generators to $\theta_\ell:\Gamma\to \Zp$ such that:
    \begin{align*}
        \theta_\ell'(x_i) & =\theta_\ell(x_i) & \text{for } & i=1,\ldots, t,\\
        \theta_\ell'(d_j) & = 0  & \text{for } & j=2,\ldots, h,  \\
        2\cdot \theta_\ell'(d_1) & =- \sum_{i=1}^{t} \theta_\ell(x_i).
    \end{align*}
    Now, using isomorphism of the type $\rho_i$ (for $i\geqslant 2$) we obtain a new set of generators ${x_i}',$ ${d_j}'$ such that ${x_i}'=x_i$ for $1\leqslant i \leqslant k$, $ {d_j}'=d_j $ for $j=1,\ldots, h$ and $\theta_2'({x_i}')=\theta_2'(x_{\sigma(i)})$ for $i>k$. By Lemma \ref{Lem:change_of_sign_xi} there exists an isomorphism $\phi:\Gamma \to \Gamma$ such that $\theta_2'(\phi({x_i}'))=\varepsilon_i \cdot \theta_2'({x_i}') $ for $i=1,\ldots, t$, $\phi({d_j}')={d_j}'$ for $j=2,\ldots, h$, and $ 2 \cdot \theta_2'(\phi(d_1))= -\sum_{i=1}^{t} \varepsilon_i \cdot \theta_2'({x_i}') $. Define the isomorphism $\varphi: \Gamma \to \Gamma$ by sending $x_i \mapsto \phi(x_i')$ and $d_j\mapsto \phi(d_j')$ for $i=1,\ldots, t$ and $j=1,\ldots, h$, and $\psi:\Zp\to\Zp$ defined by multiplication by $a^{-1}\in \Zp$. We can see that $\varphi(x_i)$ is conjugated to $x_i^{\varepsilon_i}$ for $i=1,\ldots, k$. And also
    \begin{align*}
        \theta_2' (\varphi(x_i))  & = \theta_2' (\phi(x_i')) \\
                & = \varepsilon_i \cdot \theta_2' ( x_i' ) \\
                & = \varepsilon_i \cdot \theta_2' ( x_{\sigma(i)} ) \\
                & = \varepsilon_i \cdot \theta_2 ( x_{\sigma(i)} ) 
    \end{align*}    
    for $k+1 \leqslant i \leqslant t$. Thus $\psi^{-1} (\theta_2' (\varphi(x_i))) = a\cdot \varepsilon_i \cdot \theta_2 ( x_{\sigma(i)} ) = \theta_1(x_i)=\theta_1'(x_i)$ for $i=k+1,\ldots, t$. In a similar way we can see that $\psi^{-1} ( \theta_2'(\varphi(x_i)))=\theta_1'(x_i)$ for $i=1,\ldots, k$ and $ \psi^{-1} ( \theta_2'(\varphi(d_j)))=\theta_1'(d_j) $ for $j=1,\ldots, h. $ Hence $\theta_2'$ and $\theta_1'$ are topologically equivalent relative to the $k$ marked elliptic generators. Therefore $[\theta_1]=[\theta_2]$, proving the injectivity. 

    Finally, we prove the surjectivity of $\Phi$. Let $[(\beta_1,\ldots,\beta_k\mid \beta_{k+1},\ldots, \beta_t)]\in \Ttuplesk$. Define $\mu:=1+\sum_{i=2}^{t}\beta_i$,
    \begin{align*}
        \epsilon(\mu):= & \begin{cases}
            1 & \text{if } \mu  \text{ is even} \\
            -1 & \text{if } \mu  \text{ is odd}
        \end{cases} &  
        \delta (\mu):= & \begin{cases}
            -\mu & \text{if } \mu  \text{ is even} \\
            -(p-1)-(\mu-1) & \text{if } \mu  \text{ is odd}
        \end{cases}
    \end{align*}
    and a function $\theta:\Gamma \to \Zp$ given by:
    \begin{align*}
        \theta(x_i) = & \begin{cases}
            \epsilon(\mu) & \text{if } i=1 \\
            \beta_i & \text{if } 2\leqslant i \leqslant t
        \end{cases} & 
        \theta(d_j) = & \begin{cases}
            \delta(\mu)/2 & \text{if } i=1 \\
            0 & \text{if } 2\leqslant i\leqslant h.
        \end{cases}
    \end{align*}
    Since the relation of the group $\Gamma$ are preserved by the function, we have that $\theta$ is actually an homomorphism. Moreover, $\theta$ is an epimorphism because $\theta(x_1)$ is equal to $1$ or $-1$, where both of them are generators of $\Zp$. On the other hand, $\ker(\theta)\lhd \Gamma$ and by the Riemann-Hurwitz equation we can see that $\ker(\theta)\cong \pi_1(N_h)$ of a non-orientable surface of genus $h$. The proof is complete by seeing that $\Phi[\theta]=[(\beta_1,\ldots,\beta_k\mid \beta_{k+1},\ldots, \beta_t)]\in \Ttuplesk$. 
\end{proof}

\begin{rmk}
    Notice that if $k\geqslant 1$, then each equivalence class in $ \Ttuplesk $ has a representative $(1,\beta_2,\ldots, \beta_k\mid \beta_{k+1},\ldots, \beta_t)$.  Thus, we restrict the set of $t$-tuples to those that have the form $(1,\beta_2,\ldots, \beta_k\mid \beta_{k+1},\ldots, \beta_t)$ and in these we define the notion of congruence, i.e. 
    $(1,\beta_2,\ldots, \beta_k\mid \beta_{k+1},\ldots, \beta_t)\cong (1,\beta_2',\ldots, \beta_k'\mid \beta_{k+1}',\ldots, \beta_t')$ 
    if and only if there exist a permutation 
    $\sigma:\{k+1,\ldots ,t \} \to \{ k+1, \ldots ,t\}$ and 
    $\varepsilon_i\in \{-1,1\}$ for $ i=2,\ldots, t$ such that $\beta_i=\varepsilon_1\cdot \beta_i'$ for $i=2,\ldots k$ and $ \beta_i=\varepsilon_1\cdot \beta_{\sigma(i)}'$ for $i=k+1,\ldots, t$. 
\end{rmk}

The previous remark in combination with Theorem \ref{Thm:FixedPointData} and Theorem \ref{Thm:Main:CClass:SK} implies that there is a one-to-one correspondence:
    \[
        \left\lbrace  \begin{array}{c}
		\text{Conjugacy classes of subgroups} \\ 
		\text{of order $p$ in }\mathcal{N}_g^k \text{ that acts on } N_g \\
		\text{ with } t \text{ fixed points }
		\end{array}  \right\rbrace \leftrightarrow
        \left\lbrace  \begin{array}{c}
	\text{Congruence classes of }t \text{-tuples} \\		
		(1, \beta_2 \ldots, \beta_k \mid \beta_{k+1} ,\ldots, \beta_t)  \\ 
		
		\text{with } 0<\beta_j<p 
		\end{array} \right\rbrace           
    \]
which proves Theorem \ref{Thm:Main:CClass:tTupl}. We now apply this result to the case of $\modnpk$, obtaining the following result.

\begin{cor}\label{Cor:Number_of_Conjugacy_clases_Npk}
    The number of conjugacy classes of subgroups of order $p$ in $\mathcal{N}_p^1$ \ and \ $\mathcal{N}_p^2$ is equal to $\tfrac{p-1}{2}.$ 
\end{cor}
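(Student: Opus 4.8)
The plan is to reduce the count to orbits of a small symmetry group acting on $(\Zp)^{\times}$, using the combinatorial classification already established. First I would invoke Example~\ref{Ex:genus_p}: for $g=p$ the Riemann--Hurwitz equation $g-2=p(h-2)+t(p-1)$ admits the single solution $(h,t)=(1,2)$. Hence every subgroup of order $p$ in $\mathcal{N}_p^k$ acts on $N_p$ with exactly $t=2$ fixed points and quotient $N_1$, so no sum over distinct fixed-point data is needed: the total number of conjugacy classes of order-$p$ subgroups coincides with $|\CClk|$ for this unique $(h,t)$. By Theorem~\ref{Thm:Main:CClass:tTupl} (equivalently Theorem~\ref{Thm:FixedPointData} together with the normalization in the remark following it), this number equals the number of congruence classes of $2$-tuples $(1,\beta_2,\ldots,\beta_k \mid \beta_{k+1},\ldots,\beta_t)$ with $0<\beta_j<p$.

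Next I would specialize the normalized congruence to $t=2$. The index set of unmarked generators is $\{k+1,\ldots,2\}$, which is a singleton when $k=1$ and empty when $k=2$; in either case the permutation $\sigma$ is forced to be the identity. Pinning the first coordinate to $1$ forces the global scalar in the congruence to be $\pm 1$, so after absorbing $\varepsilon_1$ the only residual freedom is an independent sign change on the remaining coordinate $\beta_2$. Thus for both $k=1$ and $k=2$ the relation on normalized tuples collapses to $(1 \mid \beta_2)\cong(1\mid \beta_2')$, respectively $(1,\beta_2)\cong(1,\beta_2')$, if and only if $\beta_2'\equiv \pm\beta_2 \pmod{p}$.

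Finally I would count the orbits of the involution $\beta_2\mapsto -\beta_2=p-\beta_2$ on $(\Zp)^{\times}=\{1,\ldots,p-1\}$. Because $p$ is odd, $\beta_2=p-\beta_2$ has no solution in this range, so the involution is fixed-point free and each orbit has exactly two elements. Hence there are $\tfrac{p-1}{2}$ orbits, giving $\tfrac{p-1}{2}$ congruence classes and therefore $\tfrac{p-1}{2}$ conjugacy classes of subgroups of order $p$ in each of $\mathcal{N}_p^1$ and $\mathcal{N}_p^2$.

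I expect the only delicate point to be the bookkeeping in the normalization step: one must verify that once $\beta_1$ is fixed to $1$ the scalar $a$ of the unrestricted congruence is pinned to $\pm 1$, that the per-coordinate signs $\varepsilon_i$ still produce a genuinely independent sign on $\beta_2$ (so the reduced relation is exactly $\beta_2\sim -\beta_2$), and that the permutation contributes nothing because $t-k\le 1$. Everything downstream is the elementary fixed-point-free orbit count above.
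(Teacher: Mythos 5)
Your proposal is correct and follows essentially the same route as the paper: the unique Riemann--Hurwitz solution $(h,t)=(1,2)$ from Example~\ref{Ex:genus_p}, reduction to congruence classes of $2$-tuples $(1\mid\beta_2)$ via Theorem~\ref{Thm:Main:CClass:tTupl}, and the identification $(1\mid\beta_2)\cong(1\mid p-\beta_2)$ yielding $\tfrac{p-1}{2}$ classes. Your explicit check that the scalar $a$ is pinned to $\pm 1$ and that the permutation is forced to be trivial is a slightly more careful justification of the normalization than the paper's, but the argument is the same.
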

\begin{proof}
    By Example \ref{Ex:genus_p} the group $\modnpk$ has a subgroup of order $p$ and the unique solution of the Riemann-Hurwitz equation is $(h,t)=(1,2).$ Then by he previous theorem and remark the number of conjugacy classes is equal to the $2$-tuples $(1 \mid \beta_2)$ under congruence equivalence. Notice that the possible $2$-tuples are 
    $$ 
    (1\mid 1), (1 \mid 2), (1 \mid 3), \ldots (1 \mid p-2), (1 \mid p-1) 
    $$
    and under the congruence relation we have
    $$
    (1 \mid 1 ) \cong (1\mid p-1 ),  \ \ (1 \mid 2) \cong (1 \mid p-2), \ldots, \  \left( 1 \, \left\vert \, \frac{p-1}{2} \right. \right) \cong \left(1 \, \left\vert \, \frac{p+1}{2} \right. \right).
    $$
    Thus, the possible values for $\beta_2$ up to congruence are $1\leq \beta_2 \leq \tfrac{p-1}{2}$, proving the result for $k=1$. The case with two marked points are similar and we omit it. 
\end{proof}


\section{Normalizers of subgroups of order \texorpdfstring{$p$}{p} in \texorpdfstring{$\modn$}{Ngk}}
\label{Sec:Normalizers}

In \cite{CJX24Per}*{Theorem 1}, N. Colin, R. Jiménez-Rolland, and M.A. Xicoténcatl prove that the pure mapping class group $\modn$ has $p$-periodic cohomology whenever $\modn$ contains $p$-torsion. In Theorem \ref{Thm:Main:Torsion}, we prove that $\modn$ has $p$-torsion if there exist solutions to the Riemann-Hurwitz equation. Thus, we fix $g \geqslant 2$, $k \geqslant 1$, and an odd prime number $p$ such that $\modn$ has $p$-torsion (i.e., there exists at least one solution $h \geqslant 1, t \geqslant k$ to the Riemann-Hurwitz equation). By Brown's Theorem, we have that the Farrell cohomology of the pure mapping class group $\modn$ is determined by:
$$
    \Farrp{*}{\modn} \cong \prod_{\Zp \in S} \Farrp{*}{N(\Zp)},
$$
where $S$ is a set of representatives of the conjugacy classes of subgroups of order $p$ in $\modn$, and $N(\Zp)$ is the normalizer of $\Zp$ in $\modn$.

In the previous section, we established a one-to-one correspondence between conjugacy classes of subgroups of order $p$ in $\modn$ and congruence classes of $t$-tuples $(1, \beta_2, \ldots, \beta_k \mid \beta_{k+1}, \ldots, \beta_t)$, reducing the study of conjugation classes into a purely combinatorial problem.

The final ingredient to obtain the $p$-primary component of Farrell cohomology is to develop techniques that allow us to calculate the cohomology of the normalizers $N(\Zp)$ for the $\Zp$ representatives of the conjugacy classes. With this objective in mind, in this section we obtain a group extension:
$$
    1 \to \Zp \to N(\Zp) \to H \to 1,
$$
where $H \leqslant \mcg(N_h; t)$. We start with some notation and results about Birman-Hilden theory.


\subsection{Birman-Hilden theory} 

Let $\Zp\leqslant \modn$ be a subgroup of order $p$. Then by Nielsen realization theorem there exist a subgroup $G\leqslant \pdiffnk$ such that $ \pi(G)\cong \Zp $. Consider the branched covering $q: N_g\to N_g / G\cong N_h$ which has $t$ ramified points. By Birman-Hilden theory in non-orientable surfaces \cite{AM20BH}*{Theorem 1.1} there exist a well-defined homomorphism 
$$ 
    \widehat{I}: \Smodnk \to \Lmodnht \ \ \ [\w{f}]\mapsto [f], 
$$
where $\Lmodnht$ is a finite-index subgroup of $\mcg(N_h;t)$ formed by mapping classes of $N_h$ which has a representative $f\in \diff(N_h;t)$ that lifts to a diffeomorphism of $N_g$ by $q:N_g\to N_h$; the subgroup $\Smodnk$ of $\modnk$ is formed by mapping classes which has a representative $\w{f}\in \diffnk$ that preserves the fibers of $q:N_g\to N_h$. The groups $ \Lmodnht $ and $\Smodnk$ are called \textit{the liftable mapping class group} and \textit{the symmetric mapping class group respectively.}

We restrict the homomorphism $\widehat{I}$ to the pure mapping class group $\modn$ and denote by $\Smodn:=\modn\cap \Smodnk$ the symmetric mapping class group and denote by
$$
I: \Smodn \to \Lmodnht
$$ 
the homomorphism obtained by composing with the restriction homomorphism. Notice that $\ker(I)=\pi(\cov(q))=\pi(G)\cong \Zp$. Now, let $\mcg^k(N_h;t)$ be the subgroup of $\mcg(N_h;t)$ formed by mapping classes that fix pointwise the first $k$-marked points and permute the remaining points. Since the homomorphism $I$ is restricted to mapping classes that fix pointwise the $k$-marked points, then $\Ima(I)\leqslant \mcg^k(N_h;t)$.  Moreover, by properties of coverings spaces we can see that the subgroup of $\pdiffnk$ formed by all the elements that preserves fibers are equal to the normalizer of $G$ in $\pdiffnk$, and this fact is transfered to $\modnk$. We summarize the previous comments in the following proposition without a proof.

\begin{prop}\label{Prop:Extension_of_Normalizers}
    Let $\Zp\leqslant \modn$ and $q:N_g\to N_h$ be the induced branched covering. Let $I:\Smodn \to \mcg(N_h;t)$ be the induced homomorphism by the branched covering $q$ by the action of $\Zp$ on $N_g$. Then $\Smodn = N(\Zp)$, $\ker(I)\cong \Zp$ and $\Ima(I)\leqslant \mcg^k(N_g;t)$, where $N(\Zp)$ is the normalizer of $\Zp$ in $\modn$. In other words, we have a group extension:
    $$
        1\to \Zp \to N(\Zp) \to \Ima(I) \to 1.
    $$
\end{prop}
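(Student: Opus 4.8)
The plan is to verify the three assertions separately and then assemble the exact sequence. Throughout, write $G=\langle f\rangle\leqslant\pdiffnk$ for the lift of $\Zp$ produced by the Nielsen realization theorem, so that $\cov(q)=G$ is the deck group of the branched cover $q\colon N_g\to N_h$ and $\pi(G)\cong\Zp$. The whole statement is really a packaging of the Birman-Hilden machinery of \cite{AM20BH}*{Theorem 1.1}, so the substance of the proof lies in translating that machinery into the pure, marked setting and in tracking the marked-point/branch-point dictionary.

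First I would establish the identification $\Smodn=N(\Zp)$, which is the crux. The key classical observation is that a diffeomorphism $\widetilde h$ of the total space of $q$ is fiber-preserving (sends $G$-orbits to $G$-orbits, hence descends to the base) if and only if it normalizes the deck group, i.e.\ $\widetilde h\,G\,\widetilde h^{-1}=G$. Passing to mapping classes via the Birman-Hilden property for non-orientable branched covers, a class lies in $\Smodn$ precisely when it admits a fiber-preserving representative, and any such representative conjugates $G$ to itself; projecting to $\modn$ this reads $[\widetilde h]\,\Zp\,[\widetilde h]^{-1}=\Zp$, so $[\widetilde h]\in N(\Zp)$. Conversely, an element normalizing $\Zp$ is represented by a diffeomorphism normalizing $G$ up to isotopy, and the Birman-Hilden property upgrades this to a genuinely fiber-preserving representative; hence $N(\Zp)\subseteq\Smodn$.

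Next I would compute the kernel and image of $I$. By construction $I([\widetilde f])=[f]$ is the projected class, so $[\widetilde f]\in\ker(I)$ means $f$ is isotopic to the identity of $N_h$ relative to the $t$ branch points. The Birman-Hilden property then forces $\widetilde f$ to be isotopic to a deck transformation, i.e.\ to an element of $\cov(q)=G$; conversely every deck transformation projects to the identity, so $\ker(I)=\pi(\cov(q))=\pi(G)\cong\Zp$. For the image, every element of $\Smodn=N(\Zp)$ is a \emph{pure} mapping class, so its fiber-preserving representative $\widetilde f$ fixes the $k$ marked points pointwise; the projection $f$ therefore fixes their images, namely the first $k$ of the $t$ branch points downstairs, pointwise while possibly permuting the remaining branch points. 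By the definition of $\mcg^k(N_h;t)$ this gives $\Ima(I)\leqslant\mcg^k(N_h;t)$. Assembling these three facts yields exactly the short exact sequence $1\to\Zp\to N(\Zp)\to\Ima(I)\to1$.

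The main obstacle is the first step: rigorously identifying symmetric mapping classes with the normalizer requires the full strength of the Birman-Hilden property in the non-orientable, marked setting, namely that fiber-preservation can be arranged \emph{on isotopies} and not merely on the diffeomorphisms themselves. Only this guarantees that the diffeomorphism-level statement ($\widetilde h$ normalizes $G$) descends correctly to the mapping-class level, and that $\ker(I)$ is no larger than $\pi(G)$. Secondary care is needed to keep the marked-point/branch-point correspondence straight, so that pure classes upstairs match precisely the classes fixing the designated first $k$ branch points of $N_h$.
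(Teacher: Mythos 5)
Your proposal is correct and follows essentially the same route as the paper, which in fact states this proposition without a formal proof and only sketches the same three points: $\ker(I)=\pi(\cov(q))\cong\Zp$, $\Ima(I)\leqslant\mcg^k(N_h;t)$ because $I$ is restricted to pure classes, and the identification of fiber-preserving diffeomorphisms with the normalizer of $G$ transferred to mapping classes via the Birman--Hilden property of \cite{AM20BH}. Your write-up is actually more careful than the paper's sketch about where the Birman--Hilden property is genuinely needed (upgrading ``normalizes $\Zp$ up to isotopy'' to a fiber-preserving representative, and pinning down the kernel), and it also silently corrects the paper's typo $\mcg^k(N_g;t)$ to $\mcg^k(N_h;t)$.
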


\subsection{Case \texorpdfstring{$\mathcal{N}_{p}^{k}$}{Npk}, \texorpdfstring{$k=1,2$}{k=1,2}}

We know by Example \ref{Ex:genus_p} that the surface $N_p$ of genus an odd prime $p$, the pure mapping class groups $\mathcal{N}_p^1$ and $\mathcal{N}_p^2$ have $p$-torsion. Furthermore, there is a unique solution to the Riemann-Hurwitz equation whose values are $(h,t)=(1,2)$. Let us consider $\Zp \leqslant \modnpk$ and the diffeomorphism $f\in \pdiff(N_p;k)$ that realizes $\Zp$ for $k=1,2$. Then, via the action of $f$ on the surface $N_p$ we obtain a branched covering
\[
q:N_p \to N_1 = \R P^2 
\]
with two branch points. By Proposition \ref{Prop:Extension_of_Normalizers} we have that $q:N_p \to \R P^2 $ induces an extension of groups
\[
        1\to \Zp \to N(\Zp) \to \Ima(I) \to 1,
\]
where $\Ima(I)\leqslant \mcg^k(N_1;2)$. Since $k=1,2$, then $\mcg^k(N_1;2)=\mathcal{N}_1^2$. On the other hand, denote by $z_1, z_2 \in \R P^2$  the marked points of $\R P^2$ and by $v_1, \ v_2 \colon \R P^2\to \R P^2$ the punctured slides defined on the  marked points $z_1$ and $z_2$, then 
\[
    \mathcal{N}_1^2 \cong \langle {[v_1]} \rangle \times \langle [v_2] \rangle \cong \Z/2 \times \Z/2,
\]
see \cite{Kork02}*{Proof of Corollary 4.6}. Thus, for every $\Zp\leqslant \modnpk$ we have that $\Ima(I)\leqslant \Z/2\times \Z/2$.  
The objective of this part is to prove that $\Ima(I)=\langle [v_1] \cdot [v_2] \rangle \cong \Z/2$ for all $\Zp \leqslant \modnpk$, with $k= 1,2.$ For this purpose we prove the following lemma.


\begin{lem}\label{Lem:Image_of_vi's}
    Let $z_1, z_2 \in \R P^2$ be the marked points of $\R P^2$. Let us denote as $N_1^2 = \R P^2 \setminus \{z_1, z_2 \} $ and let
$$
\pi_1(N_1^2, z_0)=\langle x_1, x_2, d \mid x_1 \cdot x_2 \cdot d^2 =1 \rangle 
$$
be the usual presentation of the group $\pi_1(N_1^2, z_0)$. Then the effect of the isomorphisms induced by the punctured slides $v_i : \R P^2 \to \R P^2 $ for $i=1,2$ and its composition $v_1\circ v_2$ on the generators of $\pi_1( N_1^2, z_0)$ is described in the following table.
\begin{table}[ht]
\begin{center}
	\begin{tabular}{|c|c|c|c|}
	\hline 
	 Gener. & Image under $v_{1*}$ & Image under $v_{2*}$ & Image under $(v_{2}\circ v_1 )_*$ \\ 
	\hline 
	$x_1$ & $d \cdot x_1^{-1}  \cdot d^{-1}$ & $x_2^{-1} \cdot x_1 \cdot x_2 	$	& $d \cdot x_1^{-1} \cdot d^{-1}$  \\ 
	\hline 
	 $x_2$ & $d \cdot x_1  \cdot d^{-1} \cdot x_2 \cdot d \cdot x_1^{-1}  \cdot d^{-1} $ & $d \cdot x_2^{-1}  \cdot d^{-1}$ 	&	$d \cdot x_1 \cdot x_2^{-1} \cdot x_1^{-1} \cdot d^{-1}$\\ 
	\hline 
	 $d$ & $d \cdot x_1$ & $d \cdot x_2$ 	&  $d^{-1}$ \\ 
	\hline 
	\end{tabular} 
\end{center}
\caption{Effect of the punctured slides $v_1$, $v_2$ and $v_1\circ v_2$.}
\label{Tab:Effect_of_Punctured_Slides}
\end{table}
\end{lem}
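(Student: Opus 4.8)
The plan is to establish the table by a direct computation of the action of each puncture slide on $\pi_1(N_1^2, z_0)$, viewing $v_i$ as an explicit point-pushing homeomorphism. First I would fix a concrete model of $\R P^2$, say a disk with antipodal boundary points identified (a single crosscap), and place $z_0, z_1, z_2$ in its interior together with explicit representative loops: $x_i$ a small loop encircling the puncture $z_i$ and $d$ the one-sided loop passing through the crosscap, arranged so that the relation $x_1 x_2 d^2 = 1$ is visibly read off from the planar picture. Recalling (from the definition used in \cite{Kork02}) that the puncture slide $v_i$ is the homeomorphism obtained by pushing $z_i$ once along the one-sided curve represented by $d$, its isotopy class relative to the marked points is pinned down by this data.

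Next I would carry out the point-pushing computation for $v_1$ and $v_2$ separately. The governing principle is that as the moving puncture $z_i$ is dragged along $d$, every generating loop it sweeps across is conjugated, and the one-sidedness of $d$ reverses the local orientation, which is precisely the source of the inversions in the table (for instance $x_1 \mapsto d x_1^{-1} d^{-1}$ and $d \mapsto d x_1$ for $v_{1*}$). Concretely, it suffices to determine the images of $x_1$ and $d$, since $\pi_1(N_1^2, z_0)$ is free on $x_1, d$ with $x_2 = x_1^{-1} d^{-2}$; the listed image of $x_2$ can then be obtained as $v_{i*}(x_1)^{-1} v_{i*}(d)^{-2}$ and serves simultaneously as a consistency check that $v_{i*}$ is a well-defined automorphism preserving the relator.

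For the third column I would compose the two automorphisms already found, using the covariant convention $(v_2 \circ v_1)_* = v_{2*} \circ v_{1*}$ (so $v_{1*}$ is applied first). The only nontrivial simplification is for $d$: one computes $v_{2*}(v_{1*}(d)) = v_{2*}(d x_1) = d x_1 x_2$, and then the defining relation $x_1 x_2 = d^{-2}$ collapses this to $d \cdot d^{-2} = d^{-1}$, matching the table; the entries for $x_1$ and $x_2$ follow by the same substitution and word reduction in the free group.

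The main obstacle is the orientation bookkeeping intrinsic to the non-orientable setting. Unlike the orientable case, a based loop returns with reversed orientation after traversing the crosscap, so one must track with care which conjugating letters appear, and with which exponents, when the moving puncture crosses each generator. Getting a single unambiguous figure with fixed side conventions for $z_i$, and using it consistently across both slides and all three generators, is where essentially all the care lies; once that picture is fixed, the remaining steps reduce to routine reductions in the free group $\langle x_1, d \rangle$, aided by the relation $x_1 x_2 d^2 = 1$.
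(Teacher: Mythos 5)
Your proposal is correct and follows essentially the same route as the paper: a direct point-pushing computation in a planar crosscap model of $\R P^2$ with explicit representative loops for $x_1,x_2,d$ and for the one-sided curves along which the slides are performed, with the inversions traced to the orientation reversal across the crosscap. Your two algebraic shortcuts --- deriving $v_{i*}(x_2)$ from freeness on $x_1,d$ via $x_2=x_1^{-1}d^{-2}$, and obtaining the last column as $v_{2*}\circ v_{1*}$ followed by reduction with $x_1x_2=d^{-2}$ --- are consistent with the stated table and merely streamline what the paper does pictorially (the paper draws the figures for $v_1$ and handles $v_2$ and the composite by ``similar arguments'').
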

\begin{proof}
    To see the effect of each punctured slide, consider a disk that contains the marked points and the crosscap inside it. Now, let $\gamma_1,\gamma_2, \delta:[0,1]\to \R P^2$ be loops based on $z_0$ that are representatives of the classes $x_1, x_2, d \in \pi_1(N_1^ 2;z_0)$ respectively as shown in Figure \ref{fig:GeneratorsRP2a}. Let us also take the loops $\beta_1, \beta_ :[0,1] \to \R P ^2$ based on $z_1$ and the loop $\beta_2:[0,1]\to \R P^2$ based on $ z_2$, on which we define the punctured slides $v_1$ and $v_2$ as shown in Figure \ref{fig:GeneratorsRP2b}.

    \begin{figure}[ht]
        \centering
        \begin{subfigure}{0.45\textwidth}
            \centering
            \includegraphics[width=0.65\textwidth]{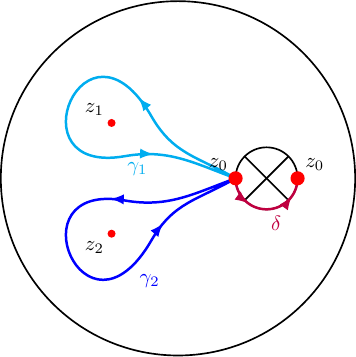}
            \caption{Loops representing the classes $x_1, x_2$ and $d$}
            \label{fig:GeneratorsRP2a}
        \end{subfigure}
        \hspace{1cm}
        \begin{subfigure}{0.45\textwidth}
            \centering
            \includegraphics[width=0.65\textwidth]{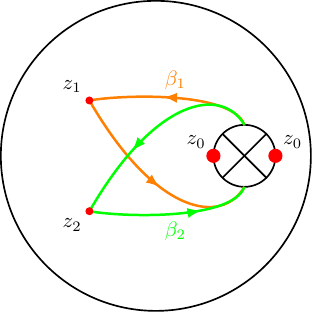}
            \caption{Loops $\beta_1, \beta_2$ on which the punctured slides $v_1$ and $v_2$ are defined}
            \label{fig:GeneratorsRP2b}
        \end{subfigure}
        \caption{Loops generators of the fundamental group of $N_1^2$ and loops used in the punctured slides}
        \label{fig:GeneratorsRP2ab}
    \end{figure}

    Recall that the punctured slide $v_1:\R P^2 \to \R P^2$ is defined by ``pushing'' the points of a regular neighborhood of the curve $\beta_1$ along it. In Figure \ref{Fig:Image_de_x1x2d_under_v1} we see the effect of $v_1$ on each of the loops $\gamma_1$, $\gamma_2$ and $\delta$. We can deduce that the effect of $v_1$ on the generators $x_1$, $x_2$ and $d$ is the one described in the table \ref{Tab:Effect_of_Punctured_Slides}. With similar arguments, it is possible to prove the result for the homeomorphisms $v_2:\R P^2 \to \R P^2$ and $v_1\circ v_2:\R P^2 \to \R P^2$.
    \begin{figure}[hb]
    \centering
    \begin{subfigure}[b]{0.8\linewidth}
        \centering
        \includegraphics[width=\linewidth]{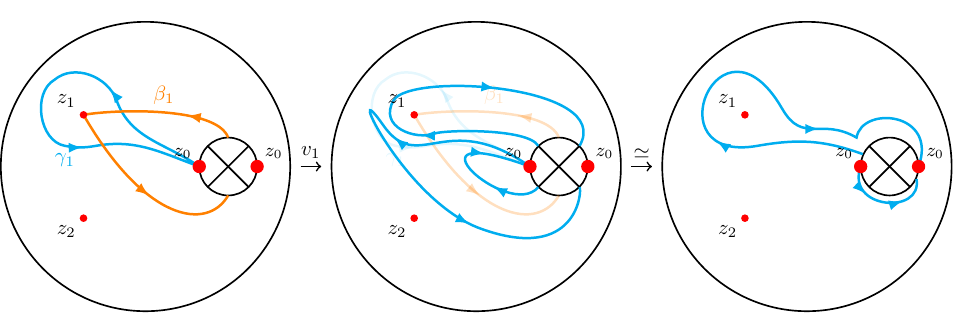}
        \caption{Effect of $v_1$ on $\gamma_1$.}
        \label{Fig:Effect_vgamma1}
    \end{subfigure}
    \vspace{1em} 
    \begin{subfigure}[b]{0.8\linewidth}
        \centering
        \includegraphics[width=\linewidth]{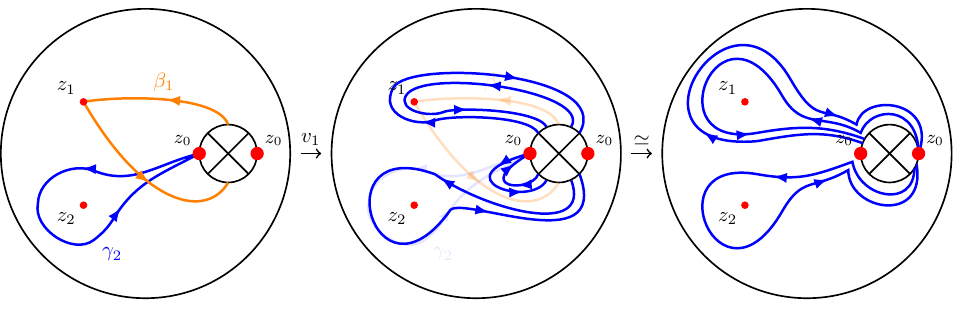}
        \caption{Effect of $v_1$ on $\gamma_2$.}
        \label{Fig:Effect_vgamma2}
    \end{subfigure}
    \vspace{1em} 
    \begin{subfigure}[b]{0.8\linewidth}
        \centering
        \includegraphics[width=\linewidth]{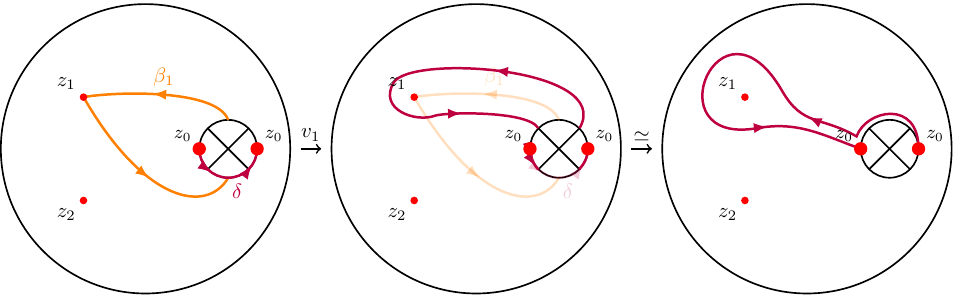}
        \caption{Effect of $v_1$ on $\delta$.}
        \label{Fig:Effect_vdelta}
    \end{subfigure}
    \caption{Effects of $v_1$ on $\gamma_1$, $\gamma_2$, and $\delta$.}
    \label{Fig:Image_de_x1x2d_under_v1}
\end{figure}
\end{proof}

\begin{lem}
    Let $p$ be and odd prime and $\Zp \leqslant \mathcal{N}_p^k$ with $k=1,2$. Then $\Ima (I)=\langle [v_1]\cdot [v_2] \rangle \cong \Z/2.$ In particular, there exist the following group extension  
    $$
        1\to \Zp \to N(\Zp) \to  \Z/2 \to 1.
    $$
\end{lem}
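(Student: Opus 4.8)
The plan is to combine the standard lifting criterion for regular covers with the explicit action recorded in Lemma~\ref{Lem:Image_of_vi's}. Recall that the branched covering $q\colon N_p\to \R P^2$ arises from the surface kernel $\theta_{\Zp}\colon\Gamma\to\Zp$ attached to $\Zp\leqslant\modnpk$, where $\Gamma$ has signature $(1;-;[(p)^2])$. Restricting $q$ to the complement of the two branch points gives a regular $\Zp$-covering of $N_1^2=\R P^2\setminus\{z_1,z_2\}$, classified by the epimorphism $\bar\theta\colon\pi_1(N_1^2,z_0)\to\Zp$ induced by $\theta_{\Zp}$; in the presentation $\pi_1(N_1^2,z_0)=\langle x_1,x_2,d\mid x_1 x_2 d^2=1\rangle$ it satisfies $\bar\theta(x_1)=a$ and $\bar\theta(x_2)=b$ with $a,b\neq 0$, since the elliptic generators of a surface kernel map to elements of order $p$. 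First I would record the liftability criterion: a class $[f]\in\mathcal{N}_1^2$ lies in $\Ima(I)$ exactly when $f$ lifts along $q$, which for the regular $\Zp$-cover is equivalent to $f_*(\ker\bar\theta)=\ker\bar\theta$. As $\Zp$ is abelian this holds if and only if there is an automorphism $\psi\colon\Zp\to\Zp$, necessarily multiplication by a unit $\mu\in(\Zp)^\times$, with $\bar\theta\circ f_*=\psi\circ\bar\theta$.

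Next I would test the generators against this criterion by reading off Table~\ref{Tab:Effect_of_Punctured_Slides} and applying $\bar\theta$. One computes that $(v_1)_*$ sends the pair $(\bar\theta(x_1),\bar\theta(x_2))=(a,b)$ to $(-a,b)$, while $(v_2)_*$ sends $(a,b)$ to $(a,-b)$. If $[v_1]$ were liftable we would need $\mu a=-a$ and $\mu b=b$, forcing $\mu=-1$ and hence $2b=0$; but $p$ is odd and $b\neq 0$, a contradiction, and the symmetric computation rules out $[v_2]$. By contrast, the last column of the table records the effect of $(v_2\circ v_1)_*$, sending $(a,b,\bar\theta(d))$ to $(-a,-b,-\bar\theta(d))$, so $\bar\theta\circ(v_2\circ v_1)_*=\psi\circ\bar\theta$ with $\psi$ multiplication by $-1$. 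Since $[v_1]$ and $[v_2]$ commute, $[v_1]\cdot[v_2]=[v_2\circ v_1]\in\Ima(I)$.

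Finally I would combine these facts with the inclusion $\Ima(I)\leqslant\mathcal{N}_1^2\cong\langle[v_1]\rangle\times\langle[v_2]\rangle\cong\Z/2\times\Z/2$ established above. The subgroups of $\Z/2\times\Z/2$ containing $[v_1]\cdot[v_2]$ are exactly $\{1,[v_1]\cdot[v_2]\}$ and the whole group, and the latter is excluded because it contains the non-liftable $[v_1]$. Hence $\Ima(I)=\langle[v_1]\cdot[v_2]\rangle\cong\Z/2$, and substituting into Proposition~\ref{Prop:Extension_of_Normalizers} yields
$$
1\to\Zp\to N(\Zp)\to\Z/2\to 1,
$$
as claimed. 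I expect the main obstacle to be the correct setup of the lifting criterion rather than the arithmetic: one must reduce liftability for the \emph{branched} cover to preservation of $\ker\bar\theta$ for the associated regular $\Zp$-cover, and use that $\Zp$ is abelian to phrase this through a single automorphism $\psi=\mu\cdot(-)$. Once this reduction is in place, the verification via Table~\ref{Tab:Effect_of_Punctured_Slides} is routine, the parity $p\neq 2$ being exactly what forces $[v_1]$ and $[v_2]$ to fail while their product survives.
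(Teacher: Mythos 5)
Your proposal is correct and follows essentially the same route as the paper: the Birman--Hilden setup with $\Ima(I)\leqslant\mathcal{N}_1^2\cong\Z/2\times\Z/2$, the lifting criterion $f_*(\ker\bar\theta)=\ker\bar\theta$ for the associated regular $\Zp$-cover, and the evaluation of $\bar\theta$ on the images of the generators recorded in Table~\ref{Tab:Effect_of_Punctured_Slides}. If anything, your generator-level check that $\bar\theta\circ(v_i)_*$ cannot equal $\mu\cdot\bar\theta$ is the cleaner way to run the computation, since the paper's test element $\beta=x_1x_2d^2$ is the relator and hence trivial in $\pi_1(N_1^2,z_0)$.
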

\begin{proof}
    Let $q: N_p \to \R P^2$ be the branched covering obtained from the action of $\Zp$ on the surface $N_p$. Let $\w{z}_1, \w{z}_1\in N_p$ be the branched points of $q$ and $z_1=q(\w{z}_1)$, $z_2=q(\w{z}_2) \in \R P^2$ their images. Denote by $N_p^2 = N_p\setminus \{ \w{z}_1, \w{z}_2 \}$ and $ N_1^2 = \R P^2\setminus \{ z_1,z_2 \} $. Let $z_0\in N_1^2$ be the base point and consider the exact sequence of fundamental groups obtained from the covering $q: N_p^2 \to N_1^2 $ by removing the branched points and their images:
    \[
        1 \to \pi_1(N_p^2;\w{z}_0 ) \xrightarrow{q_*} \pi_1(N_1^2;z_0) =\langle x_1, x_2 , d_1 \mid x_1 \cdot x_2 \cdot d^2 =1 \rangle \xrightarrow{\theta} \Zp \to 1,
    \]
    where $\theta$ is the monodromy action of the covering $q:N_p^2\to N_1^2$. Now, recall that $I:N(\Zp)\to \Lmcg(\R P^2;2)$ where $\Lmcg(\R P^2;2)$ is the subgroup of mapping class group $\mcg(\R P^2;2)$ that consist of mapping classes that has a representative which lifts under the covering $q: N_p^2 \to N_1^2$. When $k=1,2$ the image of $I$ is contained in $\pmcg(\R P^2;2)\cong \Z/2\times \Z/2$. Moreover,  $\pmcg(\R P^2;2)\cong \langle [v_1] \rangle \times \langle [v_2] \rangle$ where $v_i$ is the punctured slide along a loop starting in the point $z_i$ for $i=1,2$. Thus, we would like to know if the mapping classes of the homeomorphism $v_1$, $v_2$ and $v_1\circ v_2$ lifts to a homemorphism in $N_p$. For basic theory of coverings a homeomorphism $y:N_1^2\to N_1^2 $ such that $y(z_0)=z_0$ lifts under $q:N_p^2\to N_1^2$ if and only if  $y_* (\ker(\theta))=\ker(\theta)$. Our objective is to prove the following:
    \begin{align*}
        v_{i*}(\ker(\theta)) & \neq \ker(\theta) & & \text{for } & i= 1,2 &  & \text{and} & & (v_1 \circ v_2)_* (\ker(\theta))= \ker(\theta).
    \end{align*}
    Notice that a loop $\gamma_i$ round the point $z_i\in \R P^2$ representative  of the generator $x_i$ does not lift a loop in $N_p^2$, since locally around of the point $z_i$ the covering $q:N_p\to \R P^2$ is modeled locally like a rotation (posibly with a composition of a reflection). Then $\theta(x_i)\neq 0$ for $i=1,2$. Consider the element $\beta=x_1 x_2 d^2\in \pi_1(N_1^2;z_0) $. We see that $\theta(\beta)=0$, but $\theta((v_1)_*(\beta))= \theta(x_2)+2\cdot\theta(d) = -\theta(x_1) \not\equiv 0 \mod (p) $ according to Lemma \ref{Lem:Image_of_vi's}. Consequently $(v_1)_*(\ker(\theta))\neq \ker(\theta)$. And also, we have that $\theta((v_2)_*(\beta))=-\theta(x_2)\not\equiv 0 \mod(p)$. Hence the mapping classes $[v_1],[v_2]\not\in \Lmcg(\R P^2;2)$. On the other hand, let be $w\in \pi_1(N_1^2;z_0  )$, then $w$ has an expression of the following form
    \[
        w=\prod\limits_{i=1}^{\alpha} x_1^{n_i} x_2^{m_i} d^{k_i}.
    \]
    Applying $\theta$ to $w$ we obtain
    \[
    \theta(w)=\sum\limits_{i=1}^{\alpha} (n_i \cdot \theta(x_1) + m_i \cdot \theta(x_2) + k_i \cdot\theta(d)).
    \]
    By Lemma \ref{Lem:Image_of_vi's}, we have 
    $$
    \theta((v_1\circ v_{2})_* (w))= \sum\limits_{i=1}^{\alpha} (n_i \cdot [-\theta(x_1)] + m_i \cdot [-\theta(x_2)] + k_i \cdot[-\theta(d)])=-\theta(w).
    $$
    Therefore,
    $w\in \ker(\theta)$ if and only if $\theta((v_1\circ v_{2})_* (w))\in \ker(\theta)$.
\end{proof}

\begin{proof}[Proof of Theorem \ref{Thm:Main:Nor:Npk}]
    By the previous lemma, we have an extension 
    $$
        1\to \Zp \to N(\Zp) \to  \Z/2 \to 1,
    $$
    thus $N(\Zp)$ is a group of order $2p$. Thus, the only possibilities for $N(\Z_p)$ are the following two
    \begin{align*}
        N(\Zp ) & \cong \Zp\times \Z/2 & \text{or} & & N(\Zp) & \cong D_{2p}.
    \end{align*}
    On the other hand, by \cite{CJX24Per}*{Theorem 2} the pure mapping class group $\modnpk$ has $p$-period equal to $4$. Using this fact and seeing that the $p$-period of $\Zp\times \Z/2$ is equal to $2$ and the $p$-period of $D_{2p}$ is equal to $4$, we conclude that $N(\Zp)\cong D_{2p}$, proving the result.
\end{proof}


\section{The \texorpdfstring{$p$}{p}-primary component of the Farrell cohomology of \texorpdfstring{$\mathcal{N}_p^k$}{Ngk}}
\label{Sec:FarrellNpk}

In this section, we apply the results of this paper to prove Theorem \ref{Thm:Main:FarrellNpk}, which provides an explicit computation of the $p$-primary component of the Farrell cohomology of the pure mapping class group $\modnpk$. 

\begin{proof}[Proof of Theorem \ref{Thm:Main:FarrellNpk}]
Theorem \ref{Thm:Main:Torsion} shows that $\modnpk$ contains $p$-torsion for $k=1,2$, while for $k \geqslant 3$ it does not have $p$-torsion. Thus, 
\[ 
    \Farrp{*}{\modnpk}=0 \quad \text{for} \quad k\geqslant 3.
\]
For  $k=1,2$, by \cite{CJX24Per}*{Theorem 1} we have that $\Farrp{*}{\modnpk}$ has $p$-periodic cohomology and Brown's formula reduce its calculation:
\[ 
    \Farrp{*}{\modnpk} \cong \prod_{\Zp \in S } \Farrp{*}{N(\Zp)},
\]
where $S$ is a set of representatives of the conjugacy classes of subgroups of $\modnpk$ of order $p$, $N(\Zp)$ is the normalizer of $\Zp$ in $\modnpk$. By Corollary \ref{Cor:Number_of_Conjugacy_clases_Npk}, there are $(p-1)/2$ conjugacy classes of subgroups of order $p$ for $\modnpk$, classified by two-tuples $(1\mid\beta_2)$ up to congruence equivalence. Furthermore, Theorem \ref{Thm:Main:Nor:Npk} establishes that $N(\Zp) \cong D_{2p}$. Thus, the result follows directly from Brown’s formula.
\end{proof}

Applying Theorem \ref{Thm:Main:FarrellNpk}, and knowing that $\vcd(\modnpk)=2p-4+k$, we deduce the following result.

\begin{cor}
    Let $p$ be an odd prime. Then, for $i \geqslant 2p-4+k$, we have:
    \begin{align*}
        H^{i}({\modnpk};\Z)_{(p)}=& \begin{cases}
        \left( \Zp \right)^{\tfrac{p-1}{2}} & i\equiv 0 \mod 4, \\
         0                                  & \text{otherwise},
        \end{cases}    
    & \text{for } & k=1,2; \\
    \\
        H^{i}({\modnpk};\Z)_{(p)} = & \ \ 0 \quad \text{for } k\geqslant 3.
    \end{align*}
\end{cor}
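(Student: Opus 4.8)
The plan is to deduce this corollary directly from Theorem \ref{Thm:Main:FarrellNpk} via the comparison between Farrell and ordinary cohomology. As recalled in the introduction, for any group $\Gamma$ of finite virtual cohomological dimension the canonical map $H^i(\Gamma;\Z)\to\widehat{H}^i(\Gamma;\Z)$ is an isomorphism in every degree $i>\vcd(\Gamma)$ and is surjective in degree $i=\vcd(\Gamma)$ (see \cite{Brown82Coh}*{Chapter X}). Passing to $p$-primary components, this yields $H^i(\modnpk;\Z)_{(p)}\cong \Farrp{i}{\modnpk}$ for all $i>\vcd(\modnpk)$, so the first thing I would do is simply substitute $\vcd(\modnpk)=2p-4+k$ and feed in the values supplied by Theorem \ref{Thm:Main:FarrellNpk}.

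In the range $i>2p-4+k$ this already settles everything. For $k=1,2$ the group $H^i(\modnpk;\Z)_{(p)}$ equals $\bigl(\Zp\bigr)^{(p-1)/2}$ when $i\equiv 0 \bmod 4$ and vanishes otherwise, exactly matching the Farrell computation; for $k\geqslant 3$ the group $\modnpk$ has no $p$-torsion (by Theorem \ref{Thm:Main:Torsion} and Example \ref{Ex:genus_p}), hence its $p$-primary Farrell cohomology is identically zero and the ordinary $p$-primary cohomology vanishes for all $i>\vcd$. Thus the only genuine content lies at the single boundary degree $i=\vcd(\modnpk)=2p-4+k$.

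The boundary degree is where I expect the difficulty, since there the comparison map is only asserted to be surjective. Because $p$ is odd one computes $2p-4+k\equiv 2+k \pmod 4$, so this degree is $\equiv 3$ when $k=1$, $\equiv 0$ when $k=2$, and $\equiv 1$ when $k=3$. For $k=2$ the surjection $H^{\vcd}(\modnpk;\Z)_{(p)}\twoheadrightarrow \Farrp{\vcd}{\modnpk}=\bigl(\Zp\bigr)^{(p-1)/2}$ gives the lower bound for free, and the remaining task is a matching upper bound ensuring no additional $p$-torsion survives in top degree; for $k=1$ and $k\geqslant 3$, where $\Farrp{\vcd}{\modnpk}=0$, surjectivity gives no information and one must instead prove outright that the $p$-part of $H^{\vcd}$ vanishes. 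I would try to obtain this upper bound from the $4$-periodicity of the $p$-primary Farrell cohomology established in \cite{CJX24Per} together with the explicit normalizer structure of Theorem \ref{Thm:Main:Nor:Npk}; alternatively, the cleanest resolution is to note that the stated comparison sharpens to an isomorphism in degree $\vcd$ once one verifies that all top-degree $p$-torsion is detected by Farrell cohomology, and establishing exactly that is the main obstacle to the statement as written with $i\geqslant\vcd$ rather than $i>\vcd$.
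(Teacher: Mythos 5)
Your argument for $i>\vcd(\modnpk)=2p-4+k$ is exactly the paper's: the corollary is stated with no proof beyond the sentence ``Applying Theorem \ref{Thm:Main:FarrellNpk}, and knowing that $\vcd(\modnpk)=2p-4+k$, we deduce the following result,'' i.e.\ one substitutes the computation of $\Farrp{i}{\modnpk}$ into the isomorphism $H^i(\Gamma;\Z)\cong\Farr{i}{\Gamma}$ valid in degrees above the $\vcd$. In that range your proof is complete and correct, including the point that for $k\geqslant 3$ the vanishing comes from the absence of $p$-torsion.

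The boundary degree $i=2p-4+k$ is where you and the paper part ways: you flag it, the paper silently includes it. You are right that the standard comparison (\cite{Brown82Coh}, Ch.~X) only gives surjectivity of $H^{\vcd}(\Gamma;\Z)\to\Farr{\vcd}{\Gamma}$, and the paper's own introduction only claims agreement ``in degrees above the $\vcd$.'' Your mod-$4$ bookkeeping is correct ($\vcd(\modnpk)\equiv k+2 \bmod 4$), so for $k=2$ surjectivity yields only the lower bound $(\Zp)^{(p-1)/2}$ in top degree, while for $k=1$ and $k\geqslant3$ it yields nothing, the target being zero. You do not close this gap --- you only sketch two possible routes --- so as a proof of the statement with ``$i\geqslant 2p-4+k$'' your proposal is incomplete at that single degree. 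But this is not a defect relative to the paper: the author supplies no argument there either, and the honest reading is that the corollary should either be stated with strict inequality or be accompanied by an additional argument (controlling the kernel of the comparison map in degree $\vcd$) that neither you nor the paper provides.
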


Finally, the method described in this article can be applied to determine the $p$-primary component of the Farrell cohomology for the pure mapping class groups $\mathcal{N}_g^k$ for $g\geqslant 3$ and $k\geqslant 1$, as present in Theorem \ref{Thm:Main:FarrellNpk}. To apply this approach, Theorem \ref{Thm:Main:Torsion} provides conditions to have $p$-torsion and Theorem \ref{Thm:Main:CClass:SK} give us the number and classification of conjugacy classes via their corresponding $t$-tuple. The main difficulty to provide more examples of the $p$-primary component of $\Farr{*}{\modn}$ lies in understanding the extension of Proposition \ref{Prop:Extension_of_Normalizers}:
    \[
        1\to \Zp \to N(\Zp) \to \Ima(I) \to 1,
    \]
which provide information to determine the cohomology of the normalizers. Thus, the approach developed here opens new directions for future work to provide new examples and a better understanding of the cohomology of $\modn$ in the context of non-orientable surfaces.

\section*{Acknowledgements}
This work started as part of the Ph.D. dissertation of the author. He would like to thank Rita Jiménez Rolland and Miguel A. Xicoténcatl for their support and valuable comments while this work was conducted. The author was funded by CONAHCyT through the program Estancias Posdoctorales por México.

\bigskip

\bibliographystyle{amsalpha}

\bibliography{References}

\end{document}